\newtheorem{theorem}{Theorem}[section]
\newtheorem{lemma}[theorem]{Lemma}
\newtheorem{prop}[theorem]{Proposition}
\newtheorem{corollary}[theorem]{Corollary}
\theoremstyle{definition}
\newtheorem{definition}[theorem]{Definition}
\theoremstyle{remark}
\newtheorem{remark}[theorem]{Remark}
\numberwithin{equation}{section}
\newcommand{\R}{\mathbb R}
\newcommand{\M}{\mathcal M}
\newcommand{\subRn}{{{\mathbb R}^n}}
\DeclareMathOperator*{\essinf}{ess\,inf}
\DeclareMathOperator*{\esssup}{ess\,sup}
\DeclareMathOperator{\sgn}{sgn}
\newcommand{\pp}{{p(\cdot)}}
\newcommand{\cpp}{{p'(\cdot)}}
\newcommand{\Lp}{L^{p(\cdot)}}
\newcommand{\Pp}{\mathcal P}
\newcommand{\qq}{{q(\cdot)}}
\newcommand{\Lq}{L^{q(\cdot)}}
\newcommand{\F}{\mathcal{F}}
\newcommand{\rr}{{r(\cdot)}}
\newcommand{\crr}{{r'(\cdot)}}
\newcommand{\Rj}{\mathcal{R}}
\def\Xint#1{\mathchoice
   {\XXint\displaystyle\textstyle{#1}}%
   {\XXint\textstyle\scriptstyle{#1}}%
   {\XXint\scriptstyle\scriptscriptstyle{#1}}%
   {\XXint\scriptscriptstyle\scriptscriptstyle{#1}}%
   \!\int}
\def\XXint#1#2#3{{\setbox0=\hbox{$#1{#2#3}{\int}$}
     \vcenter{\hbox{$#2#3$}}\kern-.5\wd0}}
\def\avgint{\Xint-}
\begin{document}

\title[Extrapolation in variable Lebesgue spaces]
{Extrapolation and weighted norm inequalities in the variable Lebesgue spaces}

\author{David Cruz-Uribe, SFO}
\address{Department of Mathematics, Trinity College}
\email{David.CruzUribe@trincoll.edu}

\author{Li-An Daniel Wang}
\address{Department of Mathematics, Trinity College}
\email{Daniel.Wang@trincoll.edu}

\thanks{Both authors are supported by the Stewart-Dorwart faculty
  development fund at Trinity College,  and the first is also
  supported by NSF grant 1362425.
  The authors would like to thank J.M.~Martell for an enlightening
  conversation on limited range extrapolation.}

\subjclass[2010]{42B25, 42B35}

\keywords{variable Lebesgue spaces, weights, Muckenhoupt weights,
  maximal operator, singular integrals, fractional integrals,
  Rubio de Francia extrapolation}

\date{August 13, 2014}

\begin{abstract}
We extend the theory of Rubio de Francia extrapolation, including
off-diagonal, limited range and $A_\infty$ extrapolation, to the
weighted variable Lebesgue spaces $\Lp(w)$.   As a consequence we are
able to show that a number of different operators from harmonic
analysis are bounded on these spaces.  The proofs of our extrapolation
results are developed in a way that outlines a general approach to
proving extrapolation theorems on other Banach function spaces.
\end{abstract}

\maketitle

\section{Introduction}

The variable Lebesgue spaces $\Lp$ are a generalization of the
classical Lebesgue spaces, replacing the constant exponent $p$ with an
exponent function $\pp$.  It is a Banach function space with the norm
\begin{equation} \label{eqn:defn-norm}
 \|f\|_\pp = \|f\|_{\Lp} =
\inf \left\{ \lambda > 0 : \int_{\R^n\setminus\R^n_\infty}
  \left(\frac{|f(x)|}{\lambda}\right)^{p(x)}\,dx
+ \|f\|_{L^\infty(\R^n_\infty)} \leq 1 \right\},
\end{equation}
where $\R^n_\infty = \{ x : p(x)=\infty \}$.  These spaces have been
the subject of considerable interest since the early 1990s both as
function spaces with intrinsic interest and for their applications to
problems arising in PDEs and the calculus of variations.  For a
thorough discussion of these spaces and their history, see~\cite{cruz-fiorenza-book,diening-harjulehto-hasto-ruzicka2010}.

Recently there has been interest in extending the theory of
Muckenhoupt $A_p$ weights to this setting.  Recall that given a
non-negative, measurable function $w$, for
$1<p<\infty$, $w\in A_p$ if
\[ [w]_{A_p} = \sup_B \left(\avgint_B w(x)\,dx\right)
\left(\avgint_B w(x)^{1-p'}\,dx\right)^{p-1} < \infty, \]
where the supremum is taken over all balls $B \subset \R^n$, and
$\avgint_B w\,dx = |B|^{-1}\int_B w\,dx$.  We say $w\in A_1$ if
\[ [w]_{A_1} = \sup_B \frac{\avgint_B w(x)\,dx}{\essinf_{x\in B} w(x)}
< \infty. \]
These weights characterize the weighted norm inequalities for the
Hardy-Littlewood maximal operator,
\[ Mf(x) = \sup_B \avgint_B |f(y)|\,dy \cdot \chi_{B}(x).   \]
More precisely, $w\in A_p$, $1<p<\infty$, if and only if $M : L^p(w)\rightarrow
L^p(w)$.    The Muckenhoupt weights
also govern the weighted norm inequalities for a large number of
operators in harmonic analysis, including singular integrals,
commutators and square functions.  For details,
see~\cite{cruz-martell-perezBook, duoandikoetxea01,grafakos08b}.

Weighted norm inequalities for the maximal operator in the variable
Lebesgue spaces were proved in~\cite{cruz-diening-hasto2011,MR2927495,
  diening-harjulehto-hasto-ruzicka2010} (see
also~\cite{diening-hastoPreprint2010}
for related results).  To show the connection with the classical
results we restate them by replacing the weight $w$ by $w^p$ in the
definition of $A_p$.  In this case we say that $w\in A_p$, $1<
p\leq \infty$, if
\[ \sup_B |B|^{-1}\|w\chi_B\|_p \|w^{-1}\chi_B\|_{p'} < \infty, \]
and this is equivalent to the norm inequality
\[ \|(Mf)w\|_p \leq C\|fw\|_p. \]

\begin{remark}
Note that in this formulation the inequality holds in the case
$p=\infty$; this fact is not well-known but was first proved by
Muckenhoupt~\cite{muckenhoupt72}.
\end{remark}

In this form the definition  immediately generalizes to the variable
Lebesgue spaces.  (See below for precise definitions.)   We say that a weight $w$ is in the class $A_\pp$ if
\[ \sup_B |B|^{-1}\|w\chi_B\|_\pp \|w^{-1}\chi_B\|_{\cpp} < \infty. \]
When $\pp$ is log-H\"older continuous ($\pp \in LH$) and is
bounded and bounded above $1$  ($1<p_-\leq p_+<\infty$), then  $w\in
A_\pp$ if and only if
\[ \|(Mf)w\|_\pp \leq C\|fw\|_\pp. \]

\medskip

In this paper we further develop the theory of
weighted norm inequalities on the variable Lebesgue spaces.   We
show that the $A_\pp$ weights govern the weighted norm
inequalities for a wide variety of operators in harmonic analysis,
including singular and fractional integrals and the Riesz transforms
associated to elliptic operators in divergence form.
To do this we show that theory of Rubio de Francia extrapolation holds
in this setting.  As an immediate consequence we prove, with very
little additional work, norm inequalities in weighted $\Lp$
spaces for any operator that satisfies estimates on $L^p(w)$ when $w$
is a Muckenhoupt $A_p$ weight.   The classical theory of extrapolation
is a powerful tool in harmonic analysis:  for a detailed treatment,
see~\cite{cruz-martell-perezBook}.
Extrapolation in the scale of the variable Lebesgue spaces was
originally developed in~\cite{MR2210118} to prove unweighted inequalities (see
also~\cite{cruz-fiorenza-book,cruz-martell-perezBook}).   It has found
wide application since (see, for
instance,~\cite{DCU-dw-P2014,MR2498558,MR3057132,MR2869787}), and the
results we present here should be equally useful.  We note that our work has
already been applied to the study of greedy approximation algorithms
on variable Lebesgue spaces in~\cite{dcu-eh-jmm14}.

The remainder of this paper is organized as follows.  In
Section~\ref{section:main-theorems} we state
our extrapolation results, including the precise definitions needed.
In Section~\ref{section:applications} we show how to apply
extrapolation to prove weighted norm inequalities for several
different kinds of operators.  Our examples are not exhaustive;
rather, they were chosen to illustrate the applicability of
extrapolation.  In Section~\ref{section:general} we give a general
overview of our approach to proving extrapolation theorems.  These
ideas are not new---they were implicit
in~\cite{cruz-martell-perezBook}.  However, we think it is worthwhile
to make them explicit here, for two reasons.  First, they will
motivate the technical details in our proofs, particularly
Theorem~\ref{thm:limited-var}.  Second, they will be helpful
to others attempting to prove extrapolation theorems in different
settings.  Finally, in Section~\ref{section:extrapol-proof} we prove
our extrapolation theorems.  By following the schema outlined in the
previous section, we actually prove more general theorems
which yield our main results as special cases.

\section{Main Theorems}
\label{section:main-theorems}

We begin with some definitions related to the variable Lebesgue
spaces.  Throughout we will follow the conventions established
in~\cite{cruz-fiorenza-book}.
Let $\Pp=\Pp(\R^n)$ be the collection of all measurable functions
$\pp : \R^n \rightarrow [1, \infty]$.   Given a set $E\subset \R^n$,
we define
\[ p_-(E) = \essinf_{x\in E} p(x), \qquad p_+(E) = \esssup_{x\in E}
p(x). \]
If $E=\R^n$, then for brevity we write $p_-$ and $p_+$.   Given $\pp$,
the conjugate exponent $\cpp$ is defined pointwise
\[ \frac{1}{p(x)}+\frac{1}{p'(x)} = 1, \]
with the convention that $1/\infty = 0$.

For our results we need to impose some regularity on the exponent
functions $\pp$.  The most important condition, one widely used in
the study of variable Lebesgue spaces, is log-H\"older
continuity.  Given $\pp\in \Pp$, we say $\pp\in LH_0$ if
there exists a constant $C_0$ such that
 \begin{equation}\label{def:LH0}
	|p(x) - p(y)| \leq \frac{C_0}{-\log(|x - y|)}, \qquad x,\, y, \in \R^n, \qquad |x - y| < 1/2,
 \end{equation}
and $\pp\in LH_\infty$ if there exists $p_{\infty}$ and $C_\infty > 0$, such that
 \begin{equation}\label{def:LHinf}
 |p(x) - p_{\infty}| \leq \frac{C_\infty}{\log(e + |x|)}, \qquad x \in \R^n.
\end{equation}	
If $\pp$ satisfies both of these conditions we write $\pp\in LH$.   It
is immediate that if $\pp \in LH$, then $\cpp \in LH$. A key consequence of log-H\"older continuity is the fact that if $1<p_-$ and
$\pp\in LH$, then the maximal operator is bounded on $\Lp$.

\begin{theorem} \label{thm:max-op}
Given $\pp \in \Pp$, suppose $1<p_-\leq p_+<\infty$ and $\pp\in LH$.
Then $\|Mf\|_\pp \leq C\|f\|_\pp$.
\end{theorem}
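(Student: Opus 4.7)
The plan is to reduce the norm inequality to a modular estimate and then to a pointwise one, both standard moves in the variable exponent setting. Since $p_+<\infty$ the set $\R^n_\infty$ has measure zero and \eqref{eqn:defn-norm} reduces to the usual Luxemburg form; write $\rho_\pp(f) = \int_{\R^n} |f(x)|^{p(x)}\,dx$. By homogeneity it suffices to show that $\rho_\pp(f)\leq 1$ implies $\rho_\pp(Mf/C)\leq 1$ for some $C$ depending only on $n$, $p_-$, $p_+$, $C_0$, and $C_\infty$. I would fix an auxiliary exponent $p_0 \in (1,p_-)$ and aim for the pointwise bound
\[
 Mf(x)^{p(x)} \leq C_1\, \bigl(M\bigl(|f|^{\pp/p_0}\bigr)(x)\bigr)^{p_0} + G(x), \qquad x\in \R^n,
\]
where $G\in L^1(\R^n)$ has $\|G\|_1 \leq C_2\bigl(\rho_\pp(f)+1\bigr)$. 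Integrating and applying the classical $L^{p_0}(\R^n)$ boundedness of $M$ (valid since $p_0>1$) then yields $\rho_\pp(Mf)\leq C_3$, whence the theorem follows after rescaling.

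To derive the pointwise estimate, fix $x$ and a ball $B\ni x$. The argument branches on $\diam(B)$. In the local regime $\diam(B)<1$, condition $LH_0$ in \eqref{def:LH0} gives $|B|^{\,p(y)-p(x)}\leq C_0'$ uniformly for $y\in B$, which allows one to swap the outer exponent $p(x)$ with the inner exponent $p(y)$ at the cost of a bounded constant after applying Jensen's inequality. This converts the $p(x)$th power of the average of $|f|$ into an average of $|f|^{p(y)}$, which after a further splitting according to whether $|f|\leq 1$ or $|f|>1$ on $B$ fits inside the target $M\bigl(|f|^{\pp/p_0}\bigr)^{p_0}$.

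In the global regime $\diam(B)\geq 1$, I use $LH_\infty$ in \eqref{def:LHinf} to approximate both $p(x)$ and $p(y)$ by the common value $p_\infty$, with errors controlled by $(\log(e+|x|))^{-1}$ and $(\log(e+|y|))^{-1}$ respectively. The resulting discrepancy is absorbed into a tail term of the form $(e+|x|)^{-np_-}$ multiplied by a constant depending on $\rho_\pp(f)$. Log-H\"older decay together with $p_->1$ makes such tails integrable, yielding the required $L^1$ function $G$.

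The main obstacle is carrying out the exchange of exponents cleanly enough that the remainder $G$ admits an $L^1$ bound uniform in $f$ over the normalized class $\rho_\pp(f)\leq 1$. The choice of $p_0$ strictly between $1$ and $p_-$ is delicate: it is needed so that the classical $L^{p_0}$ maximal inequality applies with a universal constant, and so that $|f|^{\pp/p_0}$ is an admissible majorant in both the small- and large-value regimes without losing integrability. Once the pointwise estimate and the $L^1$ control on $G$ are in hand, the conclusion follows by integrating, invoking the classical $L^{p_0}$ bound for $M$, and rescaling.
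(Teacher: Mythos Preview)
The paper does not actually prove Theorem~\ref{thm:max-op}; it is stated as background, with the reader directed to~\cite{cruz-fiorenza-book,diening-harjulehto-hasto-ruzicka2010} for details. So there is no ``paper's own proof'' to compare against.

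That said, your outline is the standard route taken in those references (essentially the Cruz-Uribe--Fiorenza--Neugebauer argument): normalize to $\rho_\pp(f)\leq 1$, establish a pointwise bound of the form $Mf(x)^{p(x)}\lesssim M(|f|^{\pp/p_0})(x)^{p_0}+G(x)$ with $G\in L^1$, split balls into local and global regimes, and use $LH_0$ and $LH_\infty$ respectively to trade exponents. The strategy is sound, and the auxiliary exponent $p_0\in(1,p_-)$ plays exactly the role you describe.

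Where your sketch is thin is precisely the place you flag as the main obstacle: controlling the error term $G$ uniformly. In the large-ball regime you cannot simply replace $p(x)$ and $p(y)$ by $p_\infty$ and absorb everything into $(e+|x|)^{-np_-}$; the standard argument requires a Calder\'on--Zygmund decomposition of $f$ relative to the function $(e+|x|)^{-n}$, or equivalently a careful case split on the size of the ball average versus $|B|^{-1}$, together with the so-called ``Diening lemma'' that $|B|^{p_\infty-p_B}\leq C$ under $LH_\infty$. Without that machinery the exchange $p(x)\leftrightarrow p_\infty\leftrightarrow p(y)$ on large balls does not produce an integrable remainder in general. If you intend to write out a full proof, you will need to supply that step; otherwise your sketch is an accurate high-level summary of the known argument.
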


 However,
this condition is not necessary, and there exist exponents $\pp$ which
are not log-H\"older continuous but for which the maximal operator is
still bounded on $\Lp$.
(See~\cite{cruz-fiorenza-book,diening-harjulehto-hasto-ruzicka2010}
for further details.)

\medskip

Given a weight $w$ (again, a non-negative, measurable function) and
$\pp\in \Pp$, define the weighted variable Lebesgue space $\Lp(w)$ to
be the set of all measurable functions $f$ such that $fw \in \Lp$, and
we write $\| f \|_{L^{\pp}(w)} = \| fw \|_{\pp}$.   We
say that an operator $T$ is bounded on $\Lp(w)$ if $\|(Tf)w\|_\pp \leq
C\|fw\|_\pp$ for all $f\in \Lp(w)$.    We are interested in
weights in $A_\pp$; we restate their definition here.

\begin{definition} \label{defn:Apvar}
Given an exponent $\pp\in \Pp$ and a weight $w$ such that
$0<w(x)<\infty$ almost everywhere, we say that $w\in
A_\pp$ if
\[ [w]_{A_\pp}= \sup_B |B|^{-1} \|w\chi_B\|_\pp \|w^{-1}\chi_B\|_\cpp < \infty, \]
where the supremum is taken over all balls $B\subset \R^n$.
\end{definition}

\begin{remark}
Definition \ref{defn:Apvar} has two immediate consequences.  First, if
$w\in A_\pp$, then $w\in L^\pp_{loc}$ and $w^{-1}\in L^\cpp_{loc}$.
Second, if $w \in A_{\pp}$, then $w^{-1} \in A_{\cpp}$.
\end{remark}

For our results we will need to assume that the maximal operator is
bounded on weighted variable Lebesgue spaces.  The following result is
from~\cite{MR2927495}.

\begin{theorem} \label{thm:wtd-max}
Given $\pp\in \Pp$, $1<p_-\leq p_+<\infty$, suppose $\pp \in LH$.
Then for every $w\in A_\pp$,
\begin{equation} \label{eqn:wtd-max}
 \|Mf\|_{\Lp(w)} \leq C\|f\|_{\Lp(w)}.
\end{equation}
Conversely, given any $\pp$ and $w$, if \eqref{eqn:wtd-max}
holds for $f\in \Lp(w)$, then $p_->1$ and $w\in A_\pp$.
\end{theorem}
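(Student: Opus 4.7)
The plan is to follow the approach of Diening and coauthors adapted to the weighted setting, reducing the boundedness of $M$ on $\Lp(w)$ to controllable local estimates via the log-H\"older hypothesis.

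For the sufficiency direction, I would first reduce to the dyadic maximal operator $M^d$, since $Mf$ is pointwise dominated by a finite sum of translates of $M^d f$. Fixing $\lambda > 0$, I would apply the Calder\'on--Zygmund decomposition to extract the maximal dyadic cubes $\{Q_j\}$ with $\lambda < \avgint_{Q_j}|f|\,dy \leq 2^n \lambda$ and $\{M^d f > \lambda\} = \bigsqcup_j Q_j$. On each $Q_j$, the generalized H\"older inequality in $\Lp$ combined with the $A_\pp$ bound yields
\[
 \left(\avgint_{Q_j} |f|\,dy\right) \|w\chi_{Q_j}\|_\pp \leq C[w]_{A_\pp}\|fw\chi_{Q_j}\|_\pp,
\]
which is the variable-exponent analogue of the key local estimate in Muckenhoupt's theorem. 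Passing this to the modular level, summing over the disjoint family $\{Q_j\}$, and following a layer-cake argument converts these local estimates into the global norm inequality \eqref{eqn:wtd-max}. The log-H\"older hypothesis enters precisely at the step where one replaces $\pp$ by its local infimum on each $Q_j$ with a controlled multiplicative error, which is what makes the summation over $j$ possible.

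For necessity, the condition $p_- > 1$ follows by a contrapositive argument: if $\{x : p(x) = 1\}$ has positive measure, one can localize to a ball $B$ inside this set and produce an $f = \chi_B$ for which $\|fw\|_\pp$ is finite while $\|(Mf)w\|_\pp = \infty$, using the fact that $Mf$ only has borderline $L^1$ decay away from $B$. To derive $w \in A_\pp$, test \eqref{eqn:wtd-max} with $f = w^{-\cpp}\chi_B$ suitably normalized; since $Mf(x) \geq \avgint_B |f|\,dy$ for $x \in B$, applying the norm inequality together with the generalized H\"older inequality in $\Lp$ reconstructs the product $|B|^{-1}\|w\chi_B\|_\pp \|w^{-1}\chi_B\|_\cpp$ on the left and bounds it by a constant independent of $B$.

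The main technical obstacle is the local-to-global passage in the sufficiency proof. In the constant-exponent case, disjointness of the Calder\'on--Zygmund cubes reduces the summation to straightforward additivity of $L^p$ norms. In the variable-exponent case, the Luxemburg norm is defined through an infimum and its modular is not additive when the exponent varies across cubes; one therefore needs either a self-improvement (reverse-H\"older type) property for $A_\pp$ weights or, equivalently, Diening's ``key estimate'' machinery to replace $\pp$ locally by constants at the cost of multiplicative errors controlled by log-H\"older continuity. Executing this step carefully constitutes the bulk of the technical work.
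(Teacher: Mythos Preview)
The paper does not prove this theorem: it is quoted from~\cite{MR2927495} as an established result that the extrapolation machinery takes as input, so there is no in-paper argument to compare against. Any assessment here is therefore of your sketch on its own merits.

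Your sufficiency outline is in the right spirit---a Calder\'on--Zygmund stopping-time argument combined with the $A_\pp$ condition to control local averages---and you correctly flag the modular-summation step as the real obstacle. Be aware, though, that the actual proof in the cited reference is not a direct transcription of Muckenhoupt's argument; handling the interaction between the weight and the variable exponent requires more than simply freezing $\pp$ on each cube via log-H\"older continuity, and the ``key estimate'' you allude to has to be re-established in the weighted setting, which is nontrivial.

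Your necessity argument for $p_->1$ has a genuine gap. Testing with $f=\chi_B$ for $B$ contained in $\{x:p(x)=1\}$ does not force $\|(Mf)w\|_\pp=\infty$: the function $M\chi_B$ is bounded by $1$ and decays like $|B|\,|x|^{-n}$ at infinity, so whether $(M\chi_B)w\in\Lp$ depends entirely on the behaviour of $w$ and $\pp$ far from $B$, about which you have assumed nothing. The failure of $M$ on $L^1$ is a \emph{local} phenomenon (functions with nonintegrable blow-up near a point), not one detectable by a bounded test function. The correct route is to test with functions concentrating near a point where $p$ is close to $1$ and whose $L^1$ norm is fixed but whose maximal function is arbitrarily large on a fixed set---for instance, rescalings of an approximate identity---and show the operator norm cannot be uniform. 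Your sketch for $w\in A_\pp$ via testing with $f=w^{-\cpp}\chi_B$ is the right idea, though extracting the $A_\pp$ constant from the resulting inequality in the variable-exponent setting requires some care with the modular--norm relationship.
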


For the majority of our extrapolation results we prefer to state the
regularity of $\pp$ and $w$ in terms of the boundedness of the maximal
operator.  Therefore, given $\pp\in \Pp$ and a weight $w$, we will say
$(\pp, w)$ is an $M$-pair if the maximal operator is bounded on
$L^{\pp}(w)$ and $L^{\cpp}(w^{-1})$. By Theorem~\ref{thm:wtd-max} we
necessarily have $w \in A_{\pp}$ (equivalently, $w^{-1} \in A_{\cpp}$)
and $p_- > 1$.  Conversely, if $\pp\in LH$, with $p_- > 1$, then for
any $w\in A_\pp$, $(\pp, w)$ is an $M$-pair.

\begin{remark} \label{remark:duality}
 By a very deep result of
Diening~\cite{MR2166733,diening-harjulehto-hasto-ruzicka2010}, if
$1<p_-\leq p_+<\infty$, $M$ is bounded on $\Lp$ if and only if it is
bounded on $L^\cpp$.
We conjecture that the same ``duality'' result holds in the weighted
Lebesgue spaces, that is, it suffices to define an $M$-pair only by the boundedness of $M$ on $L^{\pp}(w)$.    We also conjecture
(see~\cite{MR2927495, diening-hastoPreprint2010} that if $M$ is
bounded on $\Lp$ and $w\in A_\pp$, then $M$ is bounded on $L^{\pp}(w)$.
If these two conjectures are true, then the hypotheses of our
results below would be simpler.
\end{remark}

\medskip

Though our goal is to use extrapolation to prove specific operators are
bounded on $\Lp(w)$, we will state our results more abstractly.  Following the
approach established in~\cite{MR2210118} (see
also~\cite{cruz-fiorenza-book,cruz-martell-perezBook}) we will write
our extrapolation theorems for pairs of functions $(f,g)$ contained in
some family $\F$.  Hereafter, if we write
\[ \|f\|_{X} \leq C\|g\|_Y,  \qquad (f,g) \in \F, \]
where $X$ and $Y$ are Banach function spaces (e.g., weighted classical or
variable Lebesgue spaces), then we mean that this inequality is
true for every pair $(f,g)\in \F$ such that the left-hand side of this
inequality is finite.    We will make the utility of this formulation
clear in Section~\ref{section:applications}.

\medskip

We can now state our main results. The first is a direct
generalization of the classical Rubio de Francia extrapolation theorem
and an extension of~\cite[Theorem~1.3]{MR2210118} to weighted variable
Lebesgue spaces.

\begin{theorem}\label{thm:diag-weightedvar}
Suppose that for some $p_0$, $1 < p_0 < \infty$, and every $w_0 \in A_{p_0}$,
\begin{equation}\label{hyp:diag-weightedvar}
	\int_{\R^n} f(x)^{p_0} w_0(x) dx \leq C \int_{\R^n} g(x)^{p_0}
        w_0(x) dx,
\qquad (f, g) \in \mathcal{F}.
\end{equation}
Then for any $M$-pair $(\pp, w)$,
	\begin{equation}\label{result:diag-weightedvar}
	\| f \|_{\Lp(w)} \leq C \| g  \|_{\Lp(w)}, \qquad (f, g) \in \mathcal{F}.
	\end{equation}
The theorem holds if $p_0=1$ if we assume only that the maximal
operator is bounded on $L^{\cpp}(w^{-1})$.
\end{theorem}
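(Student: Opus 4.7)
The plan is to adapt the classical Rubio de Francia extrapolation scheme. Setting $F = fw$ and $G = gw$, the conclusion \eqref{result:diag-weightedvar} is equivalent to $\|F\|_{\pp} \leq C\|G\|_{\pp}$. As a preliminary reduction, by classical Rubio de Francia extrapolation the hypothesis \eqref{hyp:diag-weightedvar} extends to all exponents $p_1 \in (1,\infty)$ with classical $A_{p_1}$ weights, and since the $M$-pair assumption forces $p_- > 1$, I may freely assume $p_0 < p_-$. Two further ingredients are then needed: a rescaling/duality identity for the $\Lp(w)$-norm, and a Rubio de Francia iteration that produces, from a dual test function, a \emph{classical} $A_{p_0}$ weight. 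Since $\pp/p_0$ is bounded below by $1$, the identity $\|F\|_{\pp}^{p_0} = \|F^{p_0}\|_{\pp/p_0}$ combined with the associate-space duality for variable Lebesgue spaces yields $h \geq 0$ with $\|h\|_{(\pp/p_0)'} \leq 1$ and
\[ \|F^{p_0}\|_{\pp/p_0} \leq 2\int F^{p_0} h\, dx = 2\int f^{p_0}\,(w^{p_0} h)\, dx. \]

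The next step is to construct a majorant $H \geq h$ for which $w^{p_0} H$ is a classical $A_{p_0}$ weight. I would run the Rubio de Francia iteration
\[ \mathcal{R}h = \sum_{k \geq 0}\frac{T^k h}{(2\|T\|)^k} \]
not with $M$ itself but with the sublinear operator $T\phi := w^{-p_0} M(w^{p_0}\phi)$. A simple change of variables shows that $T$ is bounded on $L^{(\pp/p_0)'}$ iff $M$ is bounded on the weighted space $L^{(\pp/p_0)'}(w^{-p_0})$; extracting this boundedness from the $M$-pair assumption (which only directly gives $M$ on $\Lp(w)$ and on $L^{\cpp}(w^{-1})$) is the main technical obstacle. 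Once $T$ is bounded, the iteration produces $H \geq h$ with $\|H\|_{(\pp/p_0)'} \leq 2$ and $TH \leq 2\|T\|\,H$; unwinding $T$, the last inequality reads $M(w^{p_0} H) \leq 2\|T\|\, w^{p_0} H$, so $W := w^{p_0} H$ lies in the classical $A_1 \subseteq A_{p_0}$ class. The case $p_0 = 1$ is considerably simpler: take $T = M$ acting on $L^{\cpp}(w^{-1})$ directly, which requires nothing beyond the hypothesis stated in the last sentence of the theorem and explains why only that half of the $M$-pair condition is needed when $p_0=1$.

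Finally, apply the hypothesis \eqref{hyp:diag-weightedvar} with $w_0 = W$ and close using the generalized H\"older inequality in variable Lebesgue spaces:
\[ \int f^{p_0} W\, dx \leq C\int g^{p_0} W\, dx = C\int G^{p_0} H\, dx \leq C\|G^{p_0}\|_{\pp/p_0}\|H\|_{(\pp/p_0)'} \leq C\|G\|_{\pp}^{p_0}. \]
Taking $p_0$-th roots and chaining with the duality step yields \eqref{result:diag-weightedvar}. The hard part, as noted above, will be proving boundedness of $T$ on $L^{(\pp/p_0)'}$: this is exactly the translation of the variable $A_{\pp}$ condition on $w$ into a form compatible with classical $A_{p_0}$ extrapolation, and is where the full force of the $M$-pair hypothesis, together with self-improvement/openness properties of $A_{\pp}$ weights, will be needed.
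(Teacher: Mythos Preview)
Your scheme has a genuine gap, and you have correctly located it yourself: the boundedness of $T\phi = w^{-p_0}M(w^{p_0}\phi)$ on $L^{(\pp/p_0)'}$, i.e.\ of $M$ on $L^{(\pp/p_0)'}(w^{-p_0})$, does \emph{not} follow from the $M$-pair hypothesis by any result available in the paper. The $M$-pair condition gives exactly $M$ bounded on $L^\pp(w)$ and on $L^{\cpp}(w^{-1})$; passing from the second of these to $L^{(\pp/p_0)'}(w^{-p_0})$ with $p_0>1$ would require a self-improvement/openness property of variable $A_\pp$ weights which the paper does not prove (and which, at least at the level of generality of Theorem~\ref{thm:diag-weightedvar}, where no log-H\"older regularity on $\pp$ is assumed, is not known). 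In effect your argument proves Theorem~\ref{thm:A1var} rather than Theorem~\ref{thm:diag-weightedvar}: the hypothesis you actually need on the weight is exactly the one appearing there, and classical extrapolation cannot push $p_0$ all the way down to $1$ to close the loop.

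The paper avoids this problem by a different mechanism that uses \emph{both} halves of the $M$-pair simultaneously. Rather than dilating by $p_0$, it dualizes at level $s=1$, and introduces \emph{two} Rubio de Francia operators: $H_1$ built from $h_1 = f/\|fw\|_\pp + g/\|gw\|_\pp$ using the iteration on $L^\pp(w)$, and $H_2$ built (with a twist by $w$) from the dual function $h_2\in L^{\cpp}$ using the iteration on $L^{\cpp}(w^{-1})$. One then writes
\[
\|fw\|_\pp \leq C\Big(\int f^{p_0} H_1^{1-p_0}\, H_2 w\,dx\Big)^{1/p_0}
\Big(\int H_1\, H_2 w\,dx\Big)^{1/p_0'},
\]
and the weight $w_0 = H_1^{1-p_0}(H_2 w)$ lies in $A_{p_0}$ by reverse factorization, since $H_1\in A_1$ and $H_2 w\in A_1$. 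The second factor is controlled by H\"older and the boundedness of $H_1,H_2$; the first by the hypothesis~\eqref{hyp:diag-weightedvar} and the pointwise bound $g\leq \|gw\|_\pp\, h_1\leq \|gw\|_\pp\, H_1$. No transfer of the maximal bound to an auxiliary exponent is needed; the two halves of the $M$-pair are consumed exactly as given.
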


\begin{remark}
When $p_0=1$, Theorem~\ref{thm:diag-weightedvar} is still true and is a
special case of Theorem~\ref{thm:A1var} below.
\end{remark}

\medskip

Our second result yields off-diagonal inequalities between two
different weighted variable Lebesgue spaces.  In the constant exponent case
this result was first proved in~\cite{harboure-macias-segovia88}, and
it was proved in unweighted $\Lp$ spaces in~\cite[Theorem~1.8]{MR2210118}.  To
state it, we first define the appropriate weight classes that
generalize the  $A_p$ weights.  In the classical case
these weights were introduced in~\cite{muckenhoupt-wheeden74}.

\begin{definition} \label{defn:pq-weights}
Given $1<p\leq q<\infty$,
we say that $w\in A_{p,q}$ if
\[ \sup_B \left( \frac{1}{|B|} \int_B w(x)^q dx \right)^{1/q} \left(
  \frac{1}{|B|} \int_B w(x)^{-p'} dx \right)^{1/p'} < \infty, \]
where the supremum is taken over all balls $B\subset \R^n$.    If
$p=1$, then $w\in A_{1,q}$ if
\[ \sup_B \frac{ \avgint w(x)^q \,dx}{\essinf_{x\in B} w(x)^q} <
\infty.
\]
\end{definition}

\begin{definition} \label{defn:pq-var-weights}
Let $\pp,\,\qq \in \Pp$ be such that for some $\gamma$,
$0<\gamma<1$,
\[ \frac{1}{p(x)}-\frac{1}{q(x)} = \gamma.  \]
Given $w$ such that $0<w(x)<\infty$ almost everywhere,
 we
say that $w\in A_{\pp,\qq}$ if
\[ \sup_B |B|^{\gamma-1}\|w\chi_B\|_\qq \|w^{-1}\chi_B\|_\cpp < \infty, \]
where the supremum is taken over all balls $B\subset \R^n$.
\end{definition}

\begin{theorem}\label{thm:off-weightedvar} Suppose that for some $p_0, q_0$, $1 < p_0 \leq q_0 < \infty$, and every $w_0 \in A_{p_0, q_0}$,
 \begin{equation}\label{hyp:off-weightedvar}
	\left( \int_{\R^n} f(x)^{q_0} w_0(x)^{q_0} dx \right)^{1/q_0}
        \leq C \left( \int_{\R^n} g(x)^{p_0} w_0 (x)^{p_0} dx
        \right)^{1/p_0}, \qquad (f,g)\in \F.
	\end{equation}
Given $\pp, \qq \in \mathcal{P}$, suppose
\[ \frac{1}{p(x)} - \frac{1}{q(x)} = \frac{1}{p_0}-\frac{1}{q_0}. \]
Define $\sigma\geq 1$ by $1/\sigma'=1/p_0-1/q_0$.
If $w\in A_{\pp,\qq}$ and $(\qq/\sigma, w^\sigma)$ is an $M$-pair, then
\begin{equation}\label{result:off-weightedvar}
	\| f \|_{L^{\qq} (w)} \leq C \| g \|_{L^{\pp}(w)}, \qquad
        (f,g)\in F.
\end{equation}
The theorem holds if $p_0=1$ if we assume only that the maximal
operator is bounded on $L^{(\qq/q_0)'}(w^{-q_0})$.
\end{theorem}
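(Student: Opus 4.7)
I would reduce the statement to the endpoint case $(p_0,q_0)=(1,\sigma)$ and then carry out a single Rubio de Francia iteration in the weighted variable Lebesgue space. The classical off-diagonal Rubio de Francia extrapolation theorem for constant exponents, applied pair-by-pair to \eqref{hyp:off-weightedvar}, transfers the hypothesis from $(p_0,q_0)$ to $(1,\sigma)$ (note $1 - 1/\sigma = 1/\sigma' = 1/p_0-1/q_0$), producing
\[
\Bigl(\int_{\R^n} f(x)^{\sigma} w_0(x)^{\sigma}\,dx\Bigr)^{1/\sigma}
\leq C \int_{\R^n} g(x)\,w_0(x)\,dx,
\qquad (f,g)\in\F, \quad w_0^\sigma\in A_1.
\]
The remaining task is to deduce \eqref{result:off-weightedvar} from this reformulated hypothesis, using only the $M$-pair condition on $(\qq/\sigma, w^\sigma)$.

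\textbf{Main steps.} Fix $(f,g)\in\F$ with $\|f\|_{\Lq(w)}<\infty$. Since $q_-/\sigma>1$ by the $M$-pair hypothesis, I rewrite
\[
\|f\|_{\Lq(w)}^{\sigma} = \|f^\sigma w^\sigma\|_{\qq/\sigma} = \|f^\sigma\|_{L^{\qq/\sigma}(w^\sigma)}.
\]
Variable-Lebesgue duality furnishes a nonnegative $h$ with $\|h\|_{L^{(\qq/\sigma)'}(w^{-\sigma})}\leq 1$ such that $\|f^\sigma\|_{L^{\qq/\sigma}(w^\sigma)}\leq 2\int f^\sigma h$. Running the Rubio de Francia algorithm on $L^{(\qq/\sigma)'}(w^{-\sigma})$, which is where the $M$-pair assumption is used, yields $Rh\geq h$ with $\|Rh\|_{L^{(\qq/\sigma)'}(w^{-\sigma})}\leq 2$ and $Rh\in A_1$. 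Setting $w_0=(Rh)^{1/\sigma}$, so that $w_0^\sigma=Rh\in A_1$ and therefore $w_0\in A_{1,\sigma}$, the reformulated hypothesis applied to $w_0$ gives
\[
\int f^\sigma h \leq \int f^\sigma Rh
\leq C\Bigl(\int g\,(Rh)^{1/\sigma}\,dx\Bigr)^{\sigma}.
\]
To close the argument I factor $g(Rh)^{1/\sigma}=(gw)\cdot\bigl((Rh)^{1/\sigma}w^{-1}\bigr)$ and apply the variable H\"older inequality with the pair $\pp,\cpp$ to obtain
\[
\int g\,(Rh)^{1/\sigma}\,dx
\leq C\,\|gw\|_\pp\,\bigl\|(Rh)^{1/\sigma}w^{-1}\bigr\|_\cpp.
\]
The defining relation $1/p(x)-1/q(x)=1/\sigma'$ is equivalent to the pointwise identity $\cpp/\sigma=(\qq/\sigma)'$. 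Using this together with the homogeneity rule $\|F^{1/\sigma}\|_\cpp=\|F\|_{\cpp/\sigma}^{1/\sigma}$, the last factor becomes
\[
\bigl\|(Rh)^{1/\sigma}w^{-1}\bigr\|_\cpp
= \bigl\|Rh\cdot w^{-\sigma}\bigr\|_{(\qq/\sigma)'}^{1/\sigma}
= \|Rh\|_{L^{(\qq/\sigma)'}(w^{-\sigma})}^{1/\sigma}
\leq 2^{1/\sigma}.
\]
Chaining these bounds produces $\|f\|_{\Lq(w)}^\sigma\leq C\|g\|_{\Lp(w)}^\sigma$, which is \eqref{result:off-weightedvar}.

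\textbf{Main obstacle.} The technical heart of the proof is the pointwise identity $\cpp/\sigma=(\qq/\sigma)'$, which is exactly the bridge that allows the constant-exponent machinery (hypothesis in terms of $L^{q_0}$-integrals) to mesh with the variable-exponent duality and H\"older inequality; once this identification and the scaling $\|F^{1/\sigma}\|_\cpp=\|F\|_{\cpp/\sigma}^{1/\sigma}$ are established, the Rubio de Francia step runs as usual. The reduction to $p_0=1$ in Step~1 is essentially cosmetic but important, because the variable H\"older step in the reduced case requires only that $\pp$ be a valid exponent, whereas for general $p_0>1$ one would need $p_-\geq p_0$ to pass from $\|gw_0\|_{p_0}$ to $\|gw\|_\pp$. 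Finally, when $p_0=1$ one bypasses the classical reduction entirely and only the dual side of the $M$-pair condition is needed, which matches the weakened hypothesis in the theorem's last sentence.
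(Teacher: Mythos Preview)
Your proposal is correct and takes a genuinely different route from the paper. The paper works directly with the given pair $(p_0,q_0)$: it introduces \emph{two} Rubio de Francia operators $H_1,H_2$, a function $h_1 = f/\|fw\|_\qq + g^{\pp/\qq}w^{\pp/\qq-1}$ built from both $f$ and $g$ with variable exponents, and uses reverse factorization to manufacture an $A_{p_0,q_0}$ weight from the two $A_1$ pieces; this forces the modular--norm conversions of Proposition~\ref{prop:mod-norm} and uses both halves of the $M$-pair hypothesis. You instead first invoke the classical constant-exponent off-diagonal extrapolation theorem (Harboure--Mac\'ias--Segovia) to slide the hypothesis down to the endpoint $(1,\sigma)$, and then run a single Rubio de Francia iteration on the dual space $L^{(\qq/\sigma)'}(w^{-\sigma})$---which is essentially the argument the paper gives separately for its $p_0=1$ case, combined with the exponent identity $\sigma(\qq/\sigma)'=\cpp$ already recorded in Proposition~\ref{prop:Apq-Ar-var}. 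Your route is shorter, avoids the awkward $h_1$ and the modular machinery, and in fact uses only the dual half of the $M$-pair (so you prove something marginally stronger than stated for $p_0>1$); the price is importing the constant-exponent off-diagonal theorem as a black box. The paper's approach is self-contained and, more to the point, is deliberately written in the parametrized form of Proposition~\ref{prop-offdiag} to expose the family of admissible weight conditions---a format the authors argue is essential for understanding the limited-range case in Theorem~\ref{thm:limited-var}.
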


\begin{remark}
When $\sigma=1$, Theorem~\ref{thm:off-weightedvar} reduces to
Theorem~\ref{thm:diag-weightedvar}.  Therefore, in proving it we will
assume that $\sigma>1$.
\end{remark}

\medskip

Our third result extends the theory of limited range extrapolation to the
weighted variable Lebesgue spaces.  This concept was introduced by
Auscher and Martell~\cite{auscher-martell07} and independently by
Duoandikoetxea {\em et
  al.}~\cite{duoandikoetxea-moyua-oruetxebarria-seijoP} in a somewhat
different form.  We generalize both their results.  To state our main result
we recall a definition: we say $w\in RH_s$ for some $s>1$ if
\[ [w]_{RH_s} = \sup_B \frac{\left(\avgint_B w(x)^s\,dx\right)^{1/s}}
{\avgint w(x)\,dx} < \infty. \]
Given a weight $w$, $w\in A_p$ for some $p\geq 1$ if and only if there
there exists $s>1$ such that
$w\in RH_s$ (see~\cite{duoandikoetxea01}).  As given
in~\cite{auscher-martell07}, limited range extrapolation in the
constant exponent case is the following.

\begin{theorem} \label{thm:limited-const}
  Given $1 < q_- < q_+ < \infty$, suppose there exists $p_0$, $q_-
  <p_0< q_+$ such that for every $w_0 \in A_{p_0/q_-} \cap
  RH_{(q_+/p_0)'}$,
\begin{equation}\label{hyp:limited-var}
	\int f(x)^{p_0} w_0 (x) dx \leq c \int g(x)^{p_0} w_0(x) dx,
        \qquad (f,g)\in \F.
\end{equation}
Then for every $p$, $q_-<p<q_+$ and every $w \in A_{p/q_-} \cap
      RH_{(q_+/p)'}$,
\[ 	\int f(x)^{p} w (x) dx \leq c \int g(x)^{p} w(x) dx,
        \qquad (f,g)\in \F. \]
\end{theorem}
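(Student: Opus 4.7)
The plan is to use a Rubio de Francia iteration adapted to limited range extrapolation, following the approach of Auscher--Martell. For concreteness I focus on the case $p > p_0$; the case $p < p_0$ is handled symmetrically (for example by a dual factorization). After a standard truncation argument I may assume $\|fw^{1/p}\|_p < \infty$.

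Setting $\tau = p/p_0 > 1$, I dualize the $L^\tau$ norm: writing $\|fw^{1/p}\|_p^{p_0} = \|(fw^{1/p})^{p_0}\|_{L^\tau}$, there exists $h \geq 0$ with $\|h\|_{L^{\tau'}} = 1$ such that
\[
\|fw^{1/p}\|_p^{p_0} = \int_{\R^n} f(x)^{p_0}\, w(x)^{p_0/p}\, h(x)\, dx.
\]
The problem then reduces to constructing an envelope $H\geq h$ with $\|H\|_{L^{\tau'}}\leq C$ such that the weight $w_0 := w^{p_0/p}H$ lies in $A_{p_0/q_-}\cap RH_{(q_+/p_0)'}$, with constant independent of $h$. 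Given such an $H$, applying the hypothesis \eqref{hyp:limited-var} to $w_0$ and then using H\"older's inequality on the right-hand side (together with $\|H\|_{L^{\tau'}}\leq C$) gives $\|fw^{1/p}\|_p^{p_0} \leq c\,\|gw^{1/p}\|_p^{p_0}$, which is the desired conclusion.

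The envelope $H$ is built as a Rubio de Francia iteration $H = \sum_{k\geq 0}(2\|\mathcal{S}\|)^{-k}\mathcal{S}^{k}h$, where $\mathcal{S}$ is constructed from the Hardy--Littlewood maximal operator and an appropriate ``dual'' maximal operator (of the form $M_s f = M(f^s)^{1/s}$ for a suitable $s$). The boundedness of $\mathcal{S}$ on the weighted space $L^{\tau'}(w^{-p_0\tau'/p})$, which makes the series converge and controls $\|H\|_{L^{\tau'}}$, is precisely where the hypothesis $w\in A_{p/q_-}\cap RH_{(q_+/p)'}$ enters, via the classical Muckenhoupt theory. The main obstacle is the verification that $w_0 = w^{p_0/p}H$ actually lies in the limited range class $A_{p_0/q_-}\cap RH_{(q_+/p_0)'}$. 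Unlike the classical case, producing a single $A_1$ factor is not enough: one must combine two iterations, one producing an $A_1$ factor (responsible for the $A_{p_0/q_-}$ part) and another producing a reverse H\"older factor (responsible for the $RH_{(q_+/p_0)'}$ part), and splice them using the Johnson--Neugebauer-type identity $w\in A_r\cap RH_s \iff w^s \in A_{s(r-1)+1}$. Choosing the exponents that make this splicing produce exactly the $A_{p_0/q_-}\cap RH_{(q_+/p_0)'}$ class is the technical core of the argument.
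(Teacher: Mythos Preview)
The paper does not prove this theorem from scratch: it is quoted as the constant-exponent limited-range result of Auscher--Martell and is in fact \emph{used} inside the proof of Proposition~\ref{prop-limited}. Your outline is the Auscher--Martell strategy and is sound, but as written it is only a sketch: you correctly flag ``choosing the exponents that make the splicing produce exactly $A_{p_0/q_-}\cap RH_{(q_+/p_0)'}$'' as the technical core and then stop short of doing it, and the case $p<p_0$ is deferred to an unspecified symmetry argument. Your description also wavers between a single iteration on a composite operator $\mathcal S$ and ``combining two iterations''; either setup works, but you should commit to one.

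For comparison, the paper does give an alternative derivation in Remark~\ref{remark:lim-reduction}, recovering Theorem~\ref{thm:limited-const} as a special case of the framework of Proposition~\ref{prop-limited} (with $p_*=p_0$, so the internal appeal to Theorem~\ref{thm:limited-const} in that proof becomes vacuous and there is no circularity). That route differs from yours in two ways. First, instead of one iteration on a composite $\mathcal S$, the paper runs two separate Rubio de Francia iterations, one on $h_1=f/\|fw\|_p+g/\|gw\|_p$ and one on the dual function $h_2$, and forms $w_0=H_1^{s-p_0}H_2 w^s$; Lemma~\ref{lemma:jn} is then applied to $w_0^{(q_+/p_0)'}$ via reverse factorization. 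Second, there is no case split $p\gtrless p_0$: the parameters are solved for algebraically ($\alpha_1=p/\tau_p$, $\beta_1=\tfrac{p}{\tau_p}(1-(q_+/p)')$, $s=\tfrac{p}{\tau_p}(1-p_0/q_-)+p_0$, $\alpha_2=(q_+/p_0)'$), and the constraint $\max\bigl(p-p_0(p/q_--1),\,p_0p/q_+\bigr)<s<\min(p,p_0)$ is verified directly for every $p\in(q_-,q_+)$. Your approach is the original; the paper's is more bookkeeping-heavy but carries out exactly the exponent computation you left open.
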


In the variable exponent case we have a very different result, which
does not reduce to the constant case, Theorem~\ref{thm:limited-const}.

\begin{theorem}\label{thm:limited-var}
Given $1 < q_- < q_+ < \infty$, suppose there exists $p_0$, $q_-
  <p_0< q_+$ such that for every $w_0 \in A_{p_0/q_-} \cap
  RH_{(q_+/p_0)'}$, \eqref{hyp:limited-var} holds.
Then for every $\pp \in LH$ with $q_- < p_- \leq p_+ < q_+$,
            \begin{equation}\label{result:limited-var1}
	           \| f \|_{\pp} \leq C\| g \|_{\pp},
                \qquad (f,g)\in \F.
            \end{equation}
            More generally, there exists $p_*$, $q_-<p_*<q_+$ such
            that if let $\sigma = \frac{p_* q_-}{p_* - q_-}$, then
            there exists a constant $c = c(p_-, p_+, q_-, q_+, p_*) \in (0, 1)$, so that for every
            weight $w$ with $w^{\sigma} \in A_{\frac{\pp}{c\sigma}}$,
            we have
    \begin{equation}\label{result:limited-var2}
    \| fw \|_{\pp} \leq C \| gw \|_{\pp}.
    \end{equation}
    \end{theorem}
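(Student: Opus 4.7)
The plan is to combine classical limited-range extrapolation with a Rubio de Francia iteration on weighted variable Lebesgue spaces. First I would apply Theorem~\ref{thm:limited-const} to the hypothesis \eqref{hyp:limited-var} to enlarge the admissible range: for every $p \in (q_-, q_+)$ and every $w_0 \in A_{p/q_-} \cap RH_{(q_+/p)'}$,
\[
  \int_{\R^n} f(x)^p w_0(x)\, dx \leq C \int_{\R^n} g(x)^p w_0(x)\, dx, \qquad (f,g) \in \F.
\]
Next I would reduce to the case $q_- = 1$ via the substitutions $F = f^{q_-}$, $G = g^{q_-}$, $W = w^{q_-}$, and $r(\cdot) = \pp/q_-$, which converts $\|fw\|_\pp^{q_-}$ into $\|FW\|_{r(\cdot)}$ and transforms the upgraded hypothesis accordingly. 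Note also that the unweighted conclusion \eqref{result:limited-var1} is a special case of \eqref{result:limited-var2} with $w \equiv 1$, once $c$ is chosen small enough that $\mathbf{1} \in A_{\pp/(c\sigma)}$ holds automatically.

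Assume henceforth $q_- = 1$, and fix $p_* \in (p_+, q_+)$, so that $\sigma = p_*'$. Choose $c \in (0,1)$ small enough that $c\sigma < p_-$ and $\pp/(c\sigma) \in LH$ with lower bound strictly greater than $1$. The hypothesis $w^\sigma \in A_{\pp/(c\sigma)}$ together with Theorem~\ref{thm:wtd-max} then makes $(\pp/(c\sigma), w^\sigma)$ an $M$-pair. Using the associate-space representation of $\|\cdot\|_\pp$, there exists a nonnegative $h$, normalized in the appropriate associate space, so that
\[
  \|fw\|_\pp^{c\sigma} \lesssim \int_{\R^n} f(x)^{c\sigma}\, w(x)^{c\sigma}\, h(x)\, dx.
\]

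Next I apply the Rubio de Francia iteration $\Rf$ built from the maximal operator acting on $L^{(\pp/(c\sigma))'}(w^{-\sigma})$ to obtain $\Rf h \geq h$ with comparable norm and with $\Rf h \in A_1$ (with an explicit constant). Using the equivalence
\[
  w_0 \in A_{p_*} \cap RH_{(q_+/p_*)'} \iff w_0^{(q_+/p_*)'} \in A_{(q_+/p_*)'(p_*-1)+1},
\]
I would choose exponents $\alpha, \beta$ and set $w_0 = w^{\alpha} (\Rf h)^{\beta}$ so that $w_0 \in A_{p_*} \cap RH_{(q_+/p_*)'}$. Then the upgraded hypothesis from the first paragraph applies with $p = p_*$ and this $w_0$; unwinding via H\"older's inequality and the duality pairing returns the inequality $\|fw\|_\pp \leq C\|gw\|_\pp$.

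The principal obstacle lies in that weight construction. The exponents $\alpha, \beta$ must be calibrated so that $w_0$ simultaneously satisfies the $A_{p_*}$ and $RH_{(q_+/p_*)'}$ conditions, while also allowing H\"older's inequality at the end to reconstruct exactly $\|gw\|_\pp$ on the right-hand side. This algebraic matching is precisely what forces the constraint $c \in (0,1)$ and the dependence $c = c(p_-, p_+, q_-, q_+, p_*)$: the constant $c$ quantifies the trade-off between room needed for the Rubio de Francia iteration to converge and room needed to place $w_0$ in the correct Muckenhoupt--reverse-H\"older class.
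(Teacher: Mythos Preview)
Your reduction to $q_-=1$ and the initial appeal to Theorem~\ref{thm:limited-const} are fine, and treating \eqref{result:limited-var1} as the $w\equiv 1$ case of \eqref{result:limited-var2} is also what the paper does. The gap is in the weight construction, and it is not merely ``algebraic matching'': with a single Rubio de Francia iteration it cannot be carried out.

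After dualizing you have one $A_1$ weight, namely $\Rf h$. You then want $w_0=w^{\alpha}(\Rf h)^{\beta}\in A_{p_*}\cap RH_{(q_+/p_*)'}$, which by the Johnson--Neugebauer lemma (Lemma~\ref{lemma:jn}) means $w_0^{(q_+/p_*)'}\in A_{\tau}$ with $\tau=(q_+/p_*)'(p_*-1)+1>1$. Reverse factorization into $A_\tau$ with $\tau>1$ requires \emph{two} $A_1$ weights, $w_0^{(q_+/p_*)'}=\mu_1\,\mu_2^{\,1-\tau}$. One of them is a power of $\Rf h$; the other must absorb the factor $w^{\alpha(q_+/p_*)'}$. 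But the only information you have on $w$ is the variable-exponent condition $w^{\sigma}\in A_{\pp/(c\sigma)}$, which gives no constant-exponent $A_1$ control on any power of $w$. So there is no second $A_1$ factor available, and the claimed inclusion $w_0\in A_{p_*}\cap RH_{(q_+/p_*)'}$ fails in general. (Taking $\alpha=0$ removes $w$ from $w_0$ entirely, and then the final H\"older step cannot rebuild $\|gw\|_{\pp}$.)

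The paper fixes this by running a \emph{second} iteration $H_1$ on $h_1=f/\|fw\|_{\pp}+g/\|gw\|_{\pp}$, built so that $H_1^{\alpha_1}w^{\beta_1}\in A_1$ (Proposition~\ref{prop:H_j}). This supplies exactly the missing $A_1$ factor that carries $w$, and simultaneously serves as the ``filler'' in the H\"older split between the dual exponent $s$ and the hypothesis exponent $p_*$: one writes $\int f^{s}w^{s}H_2 \le (\int f^{p_*}H_1^{s-p_*}H_2 w^{s})^{s/p_*}(\int H_1^{s}H_2w^{s})^{1-s/p_*}$, applies the hypothesis to the first factor, and uses $g\le \|gw\|_{\pp}\,h_1\le \|gw\|_{\pp}\,H_1$ to close. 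Note also that in this scheme the second boundedness constraint $(p_+/s)'>\alpha_2=(q_+/p_*)'$ forces $p_*<\tfrac{q_+}{p_+}\,p_-$, i.e.\ $p_*$ must be taken close to $q_-$, the opposite of your choice $p_*>p_+$.
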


    \begin{remark}\label{remark:limited-constant} The two inequalities
      \ref{result:limited-var1} and \ref{result:limited-var2} follow
      from two special cases of a more general version of the theorem
      in Proposition~\ref{prop-limited}. However, the constant exponent
      result in Theorem~\ref{thm:limited-const} is from a third
      special case, and this reduction is not immediately obvious: see
      Remark \ref{remark:lim-reduction} for details. We discuss the
      relationship between these cases in
      Remark~\ref{remark:limited-cases}.
      \end{remark}

\begin{remark}  A weaker version of the unweighted
  inequality~\eqref{result:limited-var1} in
  Theorem~\ref{thm:limited-var} was implicit in Fiorenza
  {\em et al.}~\cite{MR3057132}.
\end{remark}

\begin{remark} The regularity assumption on $\pp$ in
  Theorem~\ref{thm:limited-var} can be weakened. For example, it
  follows from the proof of~\eqref{result:limited-var1} that there
  exists $s=s(q_-,q_+,p_-,p_+)$ such that it suffices to assume that
  $M$ is bounded on $L^{\pp}$ and $L^{(\pp/s)'}$.  By the duality
  property of the maximal operator (see Remark~\ref{remark:duality})
  the second assumption is equivalent to assuming $M$ is bounded on
  $L^{\pp/s}$.  Depending on whether $s>1$ or $s<1$, one of these
  assumptions implies the other, since if $M$ is bounded on $\Lp$, it
  is bounded on $L^{r\pp}$ for all $r>1$
  (\cite[Theorem~3.38]{cruz-fiorenza-book}). Regarding the constants
  in the conclusion: $c$ depends on $p_*$, and as we will see from
  the proof, the existence of
  $p_*$ is guaranteed if we take it sufficiently close to $q_-$.
\end{remark}

\begin{remark} \label{remark:power-wt}
The hypotheses on the weight $w$ for
inequality~\eqref{result:limited-var2} to hold is restrictive, but
there exist weights that satisfy them.   We have
shown that if $\pp
\in LH$ and $0\leq a < n/p_+$, then $w(x)=|x|^{-a}\in A_\pp$. (This
result will appear in~\cite{dcu-lw15}.)
Hence, if $0\leq a < cn/p_+$, $|x|^{-a\sigma}\in
  A_{\frac{\pp}{c\sigma}}$.   This result can also be used to
      construct non-trivial examples of weights that satisfy the
      hypotheses of our other results.
    \end{remark}

\medskip

We can also generalize the version of limited range extrapolation 
from~\cite{duoandikoetxea-moyua-oruetxebarria-seijoP}.

\begin{corollary}\label{cor:limited-corollary}
Given $\delta$, $0 < \delta \leq 1$, suppose that for every $w \in A_2$,
 \begin{equation}\label{hyp:limited-corollary}
	\int f(x) ^2 w(x)^{\delta} dx \leq c \int g(x)^2 w(x)^{\delta}
        dx, \qquad (f,g)\in \F.
 \end{equation}
 Then for every $\pp \in LH$ such that
 \begin{equation}\label{cor:limited-pp}
	\frac{2}{1 + \delta} < p_- \leq p_+ < \frac{2}{1 - \delta},
      \end{equation}
we have that
 \begin{equation}\label{result:limited-corollary1}
	\| f \|_{\pp} \leq C \| g \|_{\pp}, \qquad (f,g)\in \F.
 \end{equation}
More generally, for such a $\pp$, then with the same $\sigma$ in the previous theorem, there exists a constant $c \in (0, 1)$, so that for every weight $w$ such that $w^{\sigma} \in A_{\frac{\pp}{c\sigma}}$, we have
    \begin{equation}\label{result:limited-corollary2}
    \| fw \|_{\pp} \leq C \| gw \|_{\pp}.
    \end{equation}
\end{corollary}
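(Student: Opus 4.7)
The plan is to deduce Corollary~\ref{cor:limited-corollary} directly from Theorem~\ref{thm:limited-var} by a change of weight. I would apply Theorem~\ref{thm:limited-var} with $p_0=2$, $q_-=2/(1+\delta)$, and $q_+=2/(1-\delta)$. With these choices we have $p_0/q_-=1+\delta$ and $(q_+/p_0)'=1/\delta$, and the condition $q_-<p_-\leq p_+<q_+$ becomes precisely \eqref{cor:limited-pp}. The value of $\sigma=p_*q_-/(p_*-q_-)$ then agrees with the one in Theorem~\ref{thm:limited-var}, as asserted in the statement of the corollary.

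To verify the hypothesis of Theorem~\ref{thm:limited-var}, I would rewrite \eqref{hyp:limited-corollary} in the form \eqref{hyp:limited-var}. Substituting $v=w^{\delta}$ (equivalently $w=v^{1/\delta}$) turns \eqref{hyp:limited-corollary} into: for every weight $v$ with $v^{1/\delta}\in A_2$,
\[
\int_{\R^n} f(x)^2\, v(x)\,dx \leq c\int_{\R^n} g(x)^2\, v(x)\,dx, \qquad (f,g)\in\F.
\]
Now I would use the standard Johnson--Neugebauer equivalence (see \cite{duoandikoetxea01}): for $s>1$ and $p\geq 1$,
\[
u\in A_p\cap RH_s \iff u^s\in A_{s(p-1)+1}.
\]
Taking $s=1/\delta$ and $p=1+\delta$ gives $s(p-1)+1=2$, so $v^{1/\delta}\in A_2$ if and only if $v\in A_{1+\delta}\cap RH_{1/\delta}=A_{p_0/q_-}\cap RH_{(q_+/p_0)'}$. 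Thus the reformulated hypothesis is exactly \eqref{hyp:limited-var}.

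Once this identification is made, Theorem~\ref{thm:limited-var} applies and delivers both conclusions: the unweighted inequality \eqref{result:limited-corollary1} from \eqref{result:limited-var1}, and the weighted inequality \eqref{result:limited-corollary2} from \eqref{result:limited-var2}, with the same $\sigma$. The degenerate case $\delta=1$ falls outside Theorem~\ref{thm:limited-var} since then $q_+=\infty$, but here \eqref{hyp:limited-corollary} is simply the classical $A_2$-weighted hypothesis at $p_0=2$, so \eqref{result:limited-corollary1} and \eqref{result:limited-corollary2} follow directly from Theorem~\ref{thm:diag-weightedvar}. The only substantive step is the weight-class equivalence above; after the substitution $v=w^{\delta}$ the corollary is a transparent specialization of the main limited-range theorem, so I would not expect a genuine obstacle beyond recording this correspondence carefully.
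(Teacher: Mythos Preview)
Your proposal is correct and follows essentially the same route as the paper: substitute $v=w^{\delta}$, use the Johnson--Neugebauer equivalence (the paper's Lemma~\ref{lemma:jn}) to rewrite $v^{1/\delta}\in A_2$ as $v\in A_{p_0/q_-}\cap RH_{(q_+/p_0)'}$ with $p_0=2$, $q_-=2/(1+\delta)$, $q_+=2/(1-\delta)$, and then invoke Theorem~\ref{thm:limited-var}. Your separate treatment of the endpoint $\delta=1$ via Theorem~\ref{thm:diag-weightedvar} is a nice touch the paper glosses over.
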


\begin{remark}
  For simplicity we have stated Corollary~\ref{cor:limited-corollary}
  only assuming a weighted $L^2$ estimate.  A more general result is
  possible:    see~\cite[Remark~3.39]{cruz-martell-perezBook}.  An unweighted version of
  Corollary~\eqref{cor:limited-pp} that includes this generalization
  has recently been proved by Gogatishvili and Kopaliani~\cite{GK14}.
\end{remark}

\medskip

Finally, we give two variants of classical extrapolation.
We first consider extrapolation from $A_1$ weights.  This result is a
generalization of the original extrapolation theorem for variable
Lebesgue spaces in~\cite[Theorem~1.3]{MR2210118}.  It shows that we
can weaken the hypotheses of Theorem~\ref{thm:diag-weightedvar} when
$p_0=1$ and also prove results for exponents function such that
$p_-\leq 1$.  To state our result we introduce a
more general class of exponents:  we say $\pp \in \Pp_0$ if $\pp :
\R^n \rightarrow (0,\infty)$.    For such $\pp$ we define the ``norm'' $\|\cdot\|_{\pp}$
(actually a quasi-norm:  see~\cite{DCU-dw-P2014})
exactly as we do for $\pp\in \Pp$.

\begin{theorem} \label{thm:A1var}
Suppose that for some $p_0 >0 $ and every $w_0 \in A_1$,
\begin{equation} \label{eqn:A1hyp}
 \int_\subRn f(x)^{p_0} w_0 (x) dx \leq C\int_\subRn g(x)^{p_0} w_0(x)
dx, \qquad (f,g) \in \F.
\end{equation}
Given $\pp\in \Pp_0$ such that $p_- \geq p_0$, suppose that
 $w\in A_{\pp/p_0}$ and $M$ is bounded on $L^{(\pp/p_0)'}(w^{-p_0})$.
 Then
 \[ \| f \|_{\Lp(w)} \leq C\| g \|_{\Lp(w)}, \qquad (f,g) \in \F.\]
\end{theorem}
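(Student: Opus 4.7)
The plan is to adapt the classical Rubio de Francia $A_1$-extrapolation algorithm to the weighted variable Lebesgue setting: rescale by $p_0$ to reduce to a norm of exponent $q(\cdot) = \pp/p_0$ with $q_- \ge 1$, use variable-exponent associate space duality to test against functions in $L^{q'(\cdot)}(w^{-p_0})$, and run the Rubio de Francia iteration in that space to dominate each test function by an $A_1$ weight to which the hypothesis may be applied.

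Concretely, set $q(\cdot) = \pp/p_0$. A direct computation with the Luxemburg norm (using the substitution $\lambda = \mu^{p_0}$) gives the scaling identity $\|fw\|_\pp^{p_0} = \|f^{p_0}w^{p_0}\|_{q(\cdot)}$, so it suffices to show $\|f^{p_0}w^{p_0}\|_{q(\cdot)} \le C\|g^{p_0}w^{p_0}\|_{q(\cdot)}$. Associate space duality in $L^{q(\cdot)}$ yields
\[
\|f^{p_0}w^{p_0}\|_{q(\cdot)} \le C \sup_{\psi} \int_{\R^n} f(x)^{p_0} w(x)^{p_0} \psi(x)\,dx,
\]
with the supremum over $\psi \ge 0$ satisfying $\|\psi\|_{q'(\cdot)} \le 1$. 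Writing $\psi = \phi w^{-p_0}$, which is legitimate since $w$ is a proper weight, the constraint becomes $\|\phi\|_{L^{q'(\cdot)}(w^{-p_0})} \le 1$ and the integrand becomes $f^{p_0}\phi$. Since $M$ is bounded on $L^{q'(\cdot)}(w^{-p_0})$ with operator norm $\|M\|$, the Rubio de Francia iteration $R\phi = \sum_{k \ge 0}(2\|M\|)^{-k}M^k\phi$ produces a function satisfying $\phi \le R\phi$ pointwise, $\|R\phi\|_{L^{q'(\cdot)}(w^{-p_0})} \le 2$, and $[R\phi]_{A_1} \le 2\|M\|$.

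Working under the running convention that the conclusion need only be checked when $\|f\|_{\Lp(w)}$ is finite, H\"older's inequality in the variable Lebesgue space gives $\int f^{p_0}R\phi\,dx < \infty$, so the hypothesis \eqref{eqn:A1hyp} with $w_0 = R\phi$ applies. Chaining the estimates,
\begin{align*}
\int_{\R^n} f^{p_0}\phi\,dx &\le \int_{\R^n} f^{p_0}R\phi\,dx \le C\int_{\R^n} g^{p_0}R\phi\,dx \\
&= C\int_{\R^n}(g^{p_0}w^{p_0})(R\phi\,w^{-p_0})\,dx \le C\|gw\|_\pp^{p_0},
\end{align*}
where the final inequality combines H\"older in $L^{q(\cdot)}$, the norm bound $\|R\phi\,w^{-p_0}\|_{q'(\cdot)} \le 2$, and the scaling identity, with $C$ absorbing all constants. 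Taking the supremum over $\phi$ and extracting the $p_0$-th root finishes the proof. The principal obstacle is the finiteness bookkeeping---verifying $\int f^{p_0}R\phi\,dx < \infty$ to legitimize invoking the hypothesis, which relies on $\|f\|_{\Lp(w)} < \infty$ and the H\"older pairing; once this is handled, the remainder of the argument is routine, and the condition $w \in A_{\pp/p_0}$ plays only the supporting role of guaranteeing that $w$ is a proper weight.
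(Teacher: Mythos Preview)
Your argument is correct and is essentially the same as the paper's own proof: in the paper's notation you are taking $s=p_0$, $\alpha_2=1$, $\beta_2=p_0$, so that your $R\phi$ coincides with the paper's $H_2 w^{p_0}=\Rj_2(h_2 w^{p_0})$, the Rubio de Francia iteration run in $L^{(\pp/p_0)'}(w^{-p_0})$; the finiteness check and the concluding H\"older estimate are handled identically.
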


\begin{remark}
There is an important difference between Theorem~\ref{thm:A1var} (and~\cite[Theorem~1.3]{MR2210118}) and
Theorem~\ref{thm:diag-weightedvar}.  With the latter we can
extrapolate both ``up'' and ``down'': i.e., we can get results for
$\pp$ irrespective of whether $p_-$ is larger or smaller
than $p_0$.  With $A_1$ extrapolation, however, we have the restriction that
$p_-\geq p_0$.   The same situation holds in the constant exponent
case and is to be expected, since the $A_1$ case often governs ``endpoint''
inequalities.  This weaker conclusion is balanced by the weaker
hypothesis: we do not require $(\pp/p_0,
  w^{p_0})$ to be an $M$-pair,  since in the proof we will only need the
  ``dual'' inequality for the maximal operator.
\end{remark}

\begin{remark}
  The hypotheses of Theorem~\ref{thm:A1var} are
  redundant, since if $M$ is bounded on $L^{(\pp/p_0)'}(w^{-p_0})$, then
  $w^{-p_0} \in A_{(\pp/p_0)'}$, which in turn implies that $w^{p_0}\in
  A_{\pp/p_0}$.  Conversely, if we take $\pp \in LH$, then it is enough
  to assume $w^{p_0}\in A_{\pp/p_0}$.
\end{remark}

\medskip

Extrapolation can also be applied to inequalities governed by the larger class $A_\infty = \bigcup_{p>1} A_p$. The following result was first proved in~\cite{cruz-uribe-martell-perez04}.

\begin{theorem} \label{thm:Ainfty-extrapol}
If for some $p_0>0$ and every $w_0\in A_\infty$,
\begin{equation} \label{eqn:Ainfty-hyp}
 \int_\subRn f(x)^{p_0} w_0(x)\,dx \leq C
\int_\subRn g(x)^{p_0} w_0(x)\,dx, \qquad (f,g) \in \F,
\end{equation}
then the same inequality holds with $p_0$ replaced by any $p$,
$0<p<\infty$.
\end{theorem}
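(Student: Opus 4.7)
The plan is to reduce to the classical diagonal Rubio de Francia extrapolation (Theorem~\ref{thm:diag-weightedvar}) by constructing, for each target $p$ and $w\in A_\infty$, an auxiliary $A_\infty$ weight built from a Rubio de Francia iterate so that the hypothesis at $p_0$ can be applied. The essential point is that the $A_\infty$ hypothesis is strong enough to allow us to multiply $w$ by arbitrary powers of an $A_1$ weight and stay in $A_\infty$, something that is not available under the $A_{p_0}$ hypothesis alone.

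Fix $p\in(0,\infty)$ and $w\in A_\infty$, and let $(f,g)\in\F$ satisfy $\int f^p\,w\,dx<\infty$; after a standard truncation we may assume $f,g$ are bounded with compact support. Choose $r\geq 1$ with $w\in A_r$ and $s>1$ with $w\in RH_s$, both guaranteed by $w\in A_\infty$. Pick an auxiliary exponent $q$ large enough that $w\in A_q$ and $M$ is bounded on the classical space $L^q(w\,dx)$, and form the Rubio de Francia iterate
\[
H \;=\; \sum_{k=0}^{\infty}\frac{M^k(f+g)}{\bigl(2\,\|M\|_{L^q(w)}\bigr)^k}.
\]
Then $f+g\leq H$ pointwise, $\int H^q w\,dx\leq C\int (f+g)^q w\,dx$, and $H\in A_1$ with constant $[H]_{A_1}\leq 2\,\|M\|_{L^q(w)}$, all independent of $(f,g)$.

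In the case $p\geq p_0$, the inequality $f\leq H$ and H\"older give
\[
\int f^{p}\,w\,dx \;=\; \int f^{p_0}\,f^{p-p_0}\,w\,dx \;\leq\; \int f^{p_0}\,H^{p-p_0}\,w\,dx.
\]
A product-and-power stability lemma (via Jones factorization and the reverse H\"older index $s$ of $w$) shows $w_0:=H^{p-p_0}\,w\in A_\infty$, with constants independent of $(f,g)$. Applying the hypothesis at $w_0$ and then using $g\leq H$ yields
\[
\int f^{p_0}H^{p-p_0}w\,dx \;\leq\; C\!\int g^{p_0}H^{p-p_0}w\,dx \;\leq\; C\!\int H^{p}w\,dx \;\leq\; C'\!\int (f+g)^{p}w\,dx,
\]
the last step using the $L^{p}(w)$-control of $\mathcal{R}$ on the truncated functions. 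Splitting $(f+g)^{p}\lesssim f^{p}+g^{p}$ and absorbing the $\int f^{p}w\,dx$ contribution (finite by assumption) delivers $\int f^{p}w\,dx\leq C\int g^{p}w\,dx$. In the case $p<p_0$ the comparison $f^{p-p_0}\leq H^{p-p_0}$ reverses, and one dualizes through the $L^{p_0/p}$--$L^{(p_0/p)'}$ pairing, with a parallel Rubio de Francia iterate controlling the dual test function; the same chain of estimates closes up.

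The main obstacle is the product-and-power stability lemma, i.e.\ verifying that $H^{p-p_0}w\in A_\infty$ uniformly in $(f,g)$. Since $p-p_0$ can be of either sign and arbitrarily large in magnitude while $[H]_{A_1}$ is merely finite, one needs a quantitative lemma coupling the reverse H\"older index $s$ of $w$ with $[H]_{A_1}$ and with the choice of $q$; this is where the full strength of $w\in A_\infty$ (not merely $w\in A_{p_0}$) is used in an essential way. Once this structural point is in place, the rest of the proof is the standard Rubio de Francia closure via H\"older's inequality.
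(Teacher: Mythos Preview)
The paper does not prove Theorem~\ref{thm:Ainfty-extrapol}; it is quoted as a known result from~\cite{cruz-uribe-martell-perez04} and then used as a black box in the proof of Theorem~\ref{thm:Ainfty-extrapolvar}. So there is no in-paper argument to compare against.

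Your proposal, however, has a genuine gap in the closing step. You build $H=\Rj(f+g)$ using $\|M\|_{L^q(w)}$ for some large $q$ with $w\in A_q$; this gives $H\in A_1$ and $\|H\|_{L^q(w)}\le 2\|f+g\|_{L^q(w)}$, but it gives no control whatsoever of $\|H\|_{L^p(w)}$. For a generic $w\in A_\infty$ and a target exponent $p$ below the Muckenhoupt index of $w$, the maximal operator is \emph{not} bounded on $L^p(w)$, so the series defining $\Rj$ need not converge in $L^p(w)$ and the asserted bound $\int H^{p}w\le C\int(f+g)^{p}w$ simply fails. Since the whole purpose of $A_\infty$ extrapolation is precisely to reach exponents $p$ with $w\notin A_p$, this cannot be repaired by choosing $q$ differently. (The subsequent ``absorption'' of $\int f^p w$ is also unjustified: finiteness alone does not let you absorb a term with an uncontrolled constant in front.)

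A second problem is the ``product-and-power stability lemma''. For $p-p_0>1$ the weight $H^{p-p_0}$ need not lie in any $A_r$: an $A_1$ weight such as $|x|^{-(n-\epsilon)}$ raised to a power slightly larger than $1$ already fails to be locally integrable. The uniform bound on $[H]_{A_1}$ only yields a fixed reverse-H\"older exponent, hence only a bounded range of admissible powers, whereas $p-p_0$ is arbitrary. The standard proofs avoid both difficulties by dualizing in $L^{p/p_0}(w)$ and iterating a \emph{weighted} maximal-type operator on the dual test function, so that only a first power of the iterate enters the weight and the norm that is controlled is exactly the one needed to close the estimate via H\"older; see~\cite{cruz-uribe-martell-perez04} or~\cite[Chapter~3]{cruz-martell-perezBook}.
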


$A_\infty$ extrapolation in variable  Lebesgue spaces
has the following form.

\begin{theorem} \label{thm:Ainfty-extrapolvar}
Suppose that for some $p_0>0$ and every $w_0\in A_\infty$, inequality
\eqref{eqn:Ainfty-hyp} holds.   Then given $\pp \in \Pp_0$, suppose
there exists $s\leq p_-$ such that $w^s \in A_{\pp/s}$ and $M$ is bounded
on $L^{(\pp/s)'}(w^{-s})$.  Then
\[ \|f\|_{\Lp(w)} \leq C\|g\|_{\Lp(w)}, \qquad (f,g) \in \F. \]
 \end{theorem}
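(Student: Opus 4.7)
The plan is to mimic the structure of the proof of Theorem~\ref{thm:A1var}, but because the hypothesis now only requires the test weight to be in the larger class $A_\infty$, the Rubio de Francia iteration applied in the appropriate weighted variable Lebesgue space automatically produces an admissible $A_1\subset A_\infty$ weight. First, by Theorem~\ref{thm:Ainfty-extrapol}, the hypothesis at \emph{some} $p_0>0$ is equivalent to the same inequality holding at \emph{every} exponent in $(0,\infty)$; use this freedom to assume from the outset that for all $w_0\in A_\infty$ and $(f,g)\in \F$,
\[ \int_{\R^n} f(x)^s w_0(x)\,dx \leq C\int_{\R^n} g(x)^s w_0(x)\,dx. \]
Set $\qq=\pp/s$; by hypothesis $s\leq p_-$, so $q_-\geq 1$ and $\Lq$ is a genuine Banach function space. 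The weighted assumptions on $w$ translate to $w^s\in A_\qq$ and $M$ bounded on $L^\cqq(w^{-s})$.

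Fix $(f,g)\in\F$ with $\|fw\|_\pp<\infty$. The scaling identity $\|fw\|_\pp^s=\|(fw)^s\|_\qq=\|f^s w^s\|_\qq$ together with the weighted $\qq$--$\cqq$ duality yields a nonnegative $h$ with $\|h\|_{L^\cqq(w^{-s})}\leq 1$ such that
\[ \|fw\|_\pp^s \leq 2\int_{\R^n} f(x)^s h(x)\,dx. \]
Next, apply the Rubio de Francia iteration in $L^\cqq(w^{-s})$: letting $\|M\|$ denote the operator norm of $M$ on this space (finite by hypothesis), define
\[ \mathcal{R}h=\sum_{k=0}^\infty \frac{M^k h}{(2\|M\|)^k}. \]
The standard estimates give $h\leq \mathcal{R}h$ pointwise, $\|\mathcal{R}h\|_{L^\cqq(w^{-s})}\leq 2$, and $M(\mathcal{R}h)\leq 2\|M\|\,\mathcal{R}h$, so $\mathcal{R}h\in A_1\subset A_\infty$.

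To finish, combine monotonicity, the reduced hypothesis applied to $w_0=\mathcal{R}h$, and H\"older's inequality in the weighted $\qq$--$\cqq$ pairing:
\[ \int_{\R^n} f^s h\,dx \leq \int_{\R^n} f^s \mathcal{R}h\,dx \leq C\int_{\R^n} g^s \mathcal{R}h\,dx \leq C\|g^s w^s\|_\qq\|\mathcal{R}h\|_{L^\cqq(w^{-s})} \leq 2C\|gw\|_\pp^s, \]
which gives $\|fw\|_\pp\leq C'\|gw\|_\pp$ after taking $s$-th roots. The only non-routine step is the initial reduction: the freedom to change the exponent on the hypothesis side, granted by Theorem~\ref{thm:Ainfty-extrapol}, is exactly what lets us align the scaling so that $\qq=\pp/s$ has $q_-\geq 1$ (so that duality is available in a Banach function space) while simultaneously ensuring that $\mathcal{R}h$, produced by iteration in $L^\cqq(w^{-s})$, is an admissible $A_\infty$ test weight. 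The remaining ingredients---the Rubio de Francia iteration, its $A_1$ conclusion, and the Hölder estimate---are the standard tools of weighted extrapolation adapted to the variable Lebesgue setting, exactly as in Theorem~\ref{thm:A1var}.
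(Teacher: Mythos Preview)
Your proof is correct and follows essentially the same approach as the paper: both first invoke Theorem~\ref{thm:Ainfty-extrapol} to replace $p_0$ by $s$, then apply the $A_1$ extrapolation argument (the paper simply cites Theorem~\ref{thm:A1var} at this point, whereas you reproduce its proof inline). One minor omission is the verification that $\int f^s\,\mathcal{R}h\,dx<\infty$ before invoking the hypothesis, but this follows immediately from the same H\"older estimate you use at the end.
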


\begin{remark}
There is a close connection between $A_1$ and $A_\infty$
extrapolation:  see~\cite[Section~3.3]{cruz-martell-perezBook}.  We
will exploit this fact in our proof.
\end{remark}

To make the connection between Theorems~\ref{thm:Ainfty-extrapol}
and~\ref{thm:Ainfty-extrapolvar} clearer, we introduce the
notation $A_\pp^{var}$ for the weights that satisfy the variable
exponent Muckenhoupt condition.  Then if $\pp=p$ is a constant, the
hypothesis in Theorem~\ref{thm:Ainfty-extrapolvar} is $w^s \in
A_{p/s}^{var}$.  It follows at once from Definition~\ref{defn:Apvar}
that this is equivalent to $w^p \in A_{p/s}\subset A_\infty$.
Conversely, the hypothesis in Theorem~\ref{thm:Ainfty-extrapol} is
that $w^p\in A_\infty$, i.e., for some $t>1$, $w^p \in
A_t$.  Fix $s<p$ such that $t=p/s$;  then $w^p\in A_{p/s}$, or
equivalently, $w^s\in A_{p/s}^{var}$.

As the next proposition shows, the hypotheses of
Theorem~\ref{thm:Ainfty-extrapolvar} are weaker than those of
Theorem~\ref{thm:diag-weightedvar}.

\begin{prop} \label{prop:Ainfty-weaker}
Given $\pp \in \Pp$, suppose $w\in A_\pp$.  Then for every $s$,
$0<s<1$, $w^s\in A_{\pp/s}$.
\end{prop}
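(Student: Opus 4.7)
The plan is to estimate the two factors in the definition of $[w^s]_{A_{\pp/s}}$ separately, using only two standard facts about variable Lebesgue norms: the scaling identity under powers, and the generalized Hölder inequality.

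First, I would use the ``power rule'' $\|f^s\|_{\pp/s} = \|f\|_\pp^s$ for $f \geq 0$ and $s > 0$, which follows immediately from the substitution $\lambda \mapsto \lambda^{1/s}$ in the modular definition of the norm (see~\cite{cruz-fiorenza-book}). Applied to $f = w \chi_B$ this handles the $w$-factor exactly, giving $\|w^s \chi_B\|_{\pp/s} = \|w\chi_B\|_\pp^s$.

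For the conjugate factor, the key observation is the pointwise identity
\[ \frac{1}{(p(x)/s)'} \;=\; \frac{s}{p'(x)} \,+\, (1-s), \]
which is a one-line algebraic check starting from $(p(x)-s)/p(x) = s(p(x)-1)/p(x) + (1-s)$, and which remains valid (read in the natural sense) at points where $p(x) = \infty$. Applying the variable-exponent Hölder inequality with the decomposition into the variable exponent $\cpp/s$ and the constant exponent $1/(1-s)$, and writing $w^{-s}\chi_B = (w^{-s}\chi_B)\cdot \chi_B$, I obtain
\[ \|w^{-s}\chi_B\|_{(\pp/s)'} \;\leq\; C\,\|w^{-s}\chi_B\|_{\cpp/s}\,\|\chi_B\|_{1/(1-s)} \;=\; C\,\|w^{-1}\chi_B\|_\cpp^{\,s}\,|B|^{1-s}, \]
where I used the power rule a second time on the first factor and the formula $\|\chi_B\|_q = |B|^{1/q}$ for the constant exponent $q = 1/(1-s)$ on the second.

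Combining the two estimates gives
\[ |B|^{-1}\,\|w^s\chi_B\|_{\pp/s}\,\|w^{-s}\chi_B\|_{(\pp/s)'} \;\leq\; C\bigl(|B|^{-1}\|w\chi_B\|_\pp\|w^{-1}\chi_B\|_\cpp\bigr)^{s} \;\leq\; C\,[w]_{A_\pp}^{\,s}, \]
and taking the supremum over balls yields $w^s \in A_{\pp/s}$ with $[w^s]_{A_{\pp/s}} \leq C[w]_{A_\pp}^s$. I do not anticipate any real obstacle: everything reduces to two applications of the power rule together with one application of generalized Hölder. The only mildly delicate point is checking that the Hölder decomposition behaves correctly at points where $p(x)=\infty$, but at such points $\cpp(x)/s = 1/s$ and $(\pp/s)'(x) = 1$, so the inequality collapses to the classical Hölder identity $1 = s + (1-s)$ and causes no difficulty.
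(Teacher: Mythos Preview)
Your proof is correct and follows exactly the same route as the paper's: define the auxiliary exponent $\rr = 1/(1-s)$, verify the pointwise identity $\frac{1}{(\pp/s)'} = \frac{s}{\cpp} + \frac{1}{\rr}$, then apply dilation (the ``power rule'') together with the generalized H\"older inequality to split $\|w^{-s}\chi_B\|_{(\pp/s)'}$ and recover the factor $|B|^{1-s}$. The only cosmetic difference is that the paper writes the final bound as $[w]_{A_\pp}^s$ without the H\"older constant, while you keep the $C$ explicit.
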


%-----------------------------------------------------------------

\section{Norm Inequalities for Operators}
\label{section:applications}

In this section we use extrapolation to prove norm
inequalities for a variety of operators on the weighted variable Lebesgue
spaces.  We will first discuss how to prove that an operator $T$ is
bounded on $\Lp(w)$ using Theorem~\ref{thm:diag-weightedvar}.  These
same ideas can be used to apply the other theorems and the details are
left to the reader.   Following this, we will give applications to some
specific operators.  Our goal is not to be exhaustive, but rather to
illustrate the utility of extrapolation by concentrating on some key
examples.  For additional applications, see~\cite{
  cruz-fiorenza-book,MR2210118, cruz-martell-perezBook}.

\subsection*{Applying extrapolation}
The key to applying Theorem~\ref{thm:diag-weightedvar} is to construct
the appropriate family $\F$.  This generally requires an approximation
argument since we need pairs $(f,g)$ such that $f$ lies in both the
appropriate weighted space to apply the hypothesis and in the target
weighted variable Lebesgue space.  The dense subsets of $L^p(w)$ are
well-known: e.g., smooth functions and bounded functions of compact
support.  These sets are also dense in $\Lp(w)$.

\begin{lemma} \label{lemma:density}
Given $\pp \in \Pp$ with $p_+<\infty$, and a weight $w\in \Lp_{loc}$,
then $L^\infty_c$, bounded functions of compact support, and
$C_c^\infty$, smooth functions of compact support,  are dense in $\Lp(w)$.
\end{lemma}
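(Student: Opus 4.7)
\textbf{Plan for the proof of Lemma~\ref{lemma:density}.} Since $\|f\|_{\Lp(w)} = \|fw\|_\pp$ and the assumption $p_+ < \infty$ kills the $L^\infty$ term in the norm definition, the norm coincides with the Luxemburg norm generated by the modular $\rho_\pp(f) = \int_{\R^n} |f(x)|^{p(x)}\,dx$. My proof will proceed in two classical steps: truncation to approximate a general element of $\Lp(w)$ by functions in $L^\infty_c$, followed by mollification to approximate functions in $L^\infty_c$ by functions in $C_c^\infty$. The workhorse in both steps is the dominated convergence theorem applied to the modular, combined with the standard equivalence---valid precisely because $p_+ < \infty$---between modular and norm convergence: $\|f_n\|_\pp \to 0$ if and only if $\rho_\pp(f_n)\to 0$ (see \cite{cruz-fiorenza-book}). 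The hypothesis $w \in \Lp_{loc}$ enters twice: it guarantees $L^\infty_c \subset \Lp(w)$, and it produces the integrable dominating function needed in the mollification step.

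For the truncation step, given $f \in \Lp(w)$ I will set $E_k = B(0,k) \cap \{|f| \leq k\}$ and $f_k = f\chi_{E_k} \in L^\infty_c$. The bound $|f_k w| \leq k\,|w|\chi_{B(0,k)}$ together with $w\in \Lp_{loc}$ gives $f_k \in \Lp(w)$. Then $(f - f_k)w \to 0$ a.e.\ and $|(f-f_k)w| \leq |fw|$; since $p_+<\infty$ and $fw\in \Lp$ one has $\rho_\pp(fw) \leq \max\!\bigl(\|fw\|_\pp^{p_-},\|fw\|_\pp^{p_+}\bigr) < \infty$, so $|fw|^{p(\cdot)}$ is integrable. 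The DCT then yields $\rho_\pp((f-f_k)w)\to 0$, and the modular--norm equivalence gives $\|f-f_k\|_{\Lp(w)} \to 0$.

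For the mollification step, let $g \in L^\infty_c$ with $|g|\leq M$ and $\supp g \subset B(0,R)$, and let $\phi_\epsilon$ be a standard approximation to the identity. Set $h_\epsilon = g*\phi_\epsilon \in C_c^\infty$; for $\epsilon<1$ we have $\supp h_\epsilon \subset B(0,R+1)$ and $|h_\epsilon|\leq M$. By standard properties of mollification $h_\epsilon \to g$ pointwise a.e., so $(g - h_\epsilon)w \to 0$ a.e.\ and is dominated by $2M|w|\chi_{B(0,R+1)}$. Because $w\in \Lp_{loc}$ and $p_+<\infty$, the $p(\cdot)$-th power of this dominating function has finite integral. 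Dominated convergence and the modular--norm equivalence again deliver $\|g - h_\epsilon\|_{\Lp(w)}\to 0$, and composing the two steps establishes the density of $C_c^\infty$ in $\Lp(w)$.

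There is no substantive obstacle: the proof is a routine adaptation of the classical density argument to the weighted variable exponent setting. The only two points requiring care are (i)~the passage from modular to norm convergence, which relies crucially on $p_+ < \infty$, and (ii)~the verification that $w \in \Lp_{loc}$ produces an $L^1$ dominating function for $|(\cdot)w|^{p(\cdot)}$ on each fixed compact set, which in turn rests on the equivalence (also valid when $p_+<\infty$) between $w\chi_K \in \Lp$ and $\int_K w^{p(x)}\,dx <\infty$.
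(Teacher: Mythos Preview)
Your argument is correct. The truncation step is essentially the paper's own: the authors set $f_n = \sgn(f)\min(|f|,n)\chi_{B(0,n)}$ and invoke the dominated convergence theorem in $\Lp$ (with $|f|w$ as majorant), which is exactly your modular--DCT argument stated in the language of \cite[Theorem~2.62]{cruz-fiorenza-book}.

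The second step, however, follows a genuinely different route. The paper does \emph{not} mollify: instead it applies Lusin's theorem to $f\in L^\infty_c$, obtaining a continuous function $g_\epsilon$ of compact support with $\|g_\epsilon\|_\infty\le\|f\|_\infty$ that agrees with $f$ off a set $D_\epsilon$ of measure less than $\epsilon$; then $\|f-g_\epsilon\|_{\Lp(w)}\le 2\|f\|_\infty\|\chi_{D_\epsilon}w\|_\pp\to 0$ by another application of dominated convergence (here $w\in\Lp_{loc}$ supplies the majorant). Density of $C_c^\infty$ then follows from uniform approximation of continuous compactly supported functions. Your mollification argument is cleaner in that it reaches $C_c^\infty$ in one stroke rather than passing through $C_c$, and it avoids the measure-theoretic machinery of Lusin; the paper's approach, on the other hand, sidesteps the need to verify a.e.\ convergence of $g*\phi_\epsilon$ and works just as well if $g$ is merely measurable and bounded. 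Both arguments use $w\in\Lp_{loc}$ in the same essential way---to furnish an integrable majorant for $|(\cdot)w|^{p(\cdot)}$ on a fixed compact set---and both rely on $p_+<\infty$ only through the modular/norm equivalence.
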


\begin{proof}
 We first prove that $L_c^\infty$is dense. The proof is
essentially the same as in the unweighted case
\cite[Theorem~2.72]{cruz-fiorenza-book}; for the convenience of the
reader we sketch the details.
  Given $f\in \Lp(w)$, define $f_n =
  \sgn(n)\min(|f(x)|,n)\chi_{B(0,n)}$.  Then $f_n\rightarrow f$
  pointwise as $n\rightarrow \infty$, and $|f_n|w \leq |f|w$.  Since
  $p_+<\infty$, we can apply the dominated convergence
  theorem~\cite[Theorem~2.62]{cruz-fiorenza-book} to conclude that
  $f_nw\rightarrow fw$ in $\Lp$; equivalently, $f_n\rightarrow f$ in
  $\Lp(w)$.

  The density of $C_c^\infty$ now follows form this.  By Lusin's
  theorem, given $f\in L_c^\infty$, for every $\epsilon>0$ there
  exists a continuous function of compact support $g_\epsilon$ such
  that $\|g\|_\infty \leq \|f\|_\infty$ and
$|D_\epsilon| = |\{ x : g(x) \neq f(x) \} | < \epsilon$.
But then
\[ \|f-g_\epsilon\|_{\Lp(w)} \leq 2\|f\|_\infty
\|\chi_{D_\epsilon}w\|_\pp. \]
Since $w\in\Lp_{loc}$, again by the dominated convergence theorem in $\Lp$, the
righthand term tends to $0$ as $\epsilon\rightarrow 0$.  Hence,
continuous functions of compact support are dense in $\Lp(w)$.  Since
every continuous function of compact support can be approximated
uniformly by smooth functions, we also have $C_c^\infty$ is dense.
\end{proof}

\medskip

Now suppose that for every $w_0 \in A_{p_0}$ and $f\in L^{p_0}(w)$, an operator $T$ satisfies
\begin{equation} \label{eqn:starting-pt}
 \int_\subRn |Tf(x)|^{p_0}w_0(x)\,dx \leq C\int_\subRn |f(x)|^{p_0} w_0(x)\,dx.
\end{equation}
 We want to show that given an $M$-pair $(\pp, w)$, $T$
is bounded on $\Lp(w)$.  Since $w\in
\Lp_{loc}$, by a standard argument
(cf.~\cite[Theorem~5.39]{cruz-fiorenza-book}) it will suffice to show
that $\|(Tf)w\|_\pp \leq C\|fw\|_\pp$ for all $f\in L^\infty_c$.
Intuitively, we want to define the family $\F$ by
\[ \F = \{ (|Tf|,|f|) : f \in L^\infty_c \}.  \]
However, we do not know {\em a priori} that $Tf \in \Lp(w)$.  To
overcome this we make a second approximation and define $\langle
Tf\rangle_n = \min(|Tf|,n)\chi_{B(0,n)}$.  Again since $ w\in
\Lp_{loc}$, we have that $\langle Tf\rangle_n\in \Lp(w)$.  Furthermore, it
is immediate that \eqref{eqn:starting-pt} holds with $|Tf|$ replaced
by $\langle Tf\rangle_n$.  Therefore, if we define
\[  \F = \{ ( \langle Tf \rangle_n,|f|) : f \in L^\infty_c, n\geq 1 \},  \]
then we can apply Theorem~\ref{thm:diag-weightedvar} and Fatou's lemma
in the variable Lebesgue spaces
(\cite[Theorem~2.61]{cruz-fiorenza-book}) to conclude that for all
$f\in L^\infty_c$,
\[ \|(Tf)w\|_\pp \leq \liminf_{n\rightarrow \infty} \|\langle Tf\rangle_n w \|_\pp
\leq C\|fw\|_\pp. \]
Similar arguments hold if we need to take $f\in C_c^\infty$ or in some
other dense set.

\subsection*{The Hardy-Littlewood maximal operator}
Although we must assume the boundedness of the maximal operator to
apply extrapolation, as an immediate consequence we get vector-valued
inequalities for it.  It is well-known that for all $p,\,q$, $1< p,\,q < \infty$, and all $w\in A_p$,
\[ \bigg\| \bigg( \sum_{k=1}^\infty (Mf_k)^q \bigg)^{1/q}\bigg
\|_{L^p(w)} \leq C
\bigg\| \bigg( \sum_{k=1}^\infty |f_k|^q \bigg)^{1/q}\bigg
\|_{L^p(w)}. \]
(See, for instance,~\cite{andersen-john80}.) From this  we
immediately get the following inequality.

\begin{corollary}
Given an $M$-pair $(\pp, w)$ and $1<q<\infty$,
\[ \bigg\| \bigg( \sum_{k=1}^\infty (Mf_k)^q \bigg)^{1/q}\bigg
\|_{\Lp(w)} \leq C
\bigg\| \bigg( \sum_{k=1}^\infty |f_k|^q \bigg)^{1/q}\bigg
\|_{\Lp(w)}. \]
\end{corollary}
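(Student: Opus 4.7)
The plan is to apply Theorem~\ref{thm:diag-weightedvar} exactly along the template described in the ``Applying extrapolation'' subsection above. We take as starting hypothesis the classical weighted vector-valued maximal inequality of Andersen and John: for any $p_0\in(1,\infty)$ and any $w_0\in A_{p_0}$,
\[ \bigg\|\bigg(\sum_{k=1}^\infty (Mf_k)^q\bigg)^{1/q}\bigg\|_{L^{p_0}(w_0)} \leq C\bigg\|\bigg(\sum_{k=1}^\infty |f_k|^q\bigg)^{1/q}\bigg\|_{L^{p_0}(w_0)}. \]
Fix any such $p_0$ (say $p_0=2$). By Lemma~\ref{lemma:density} and a standard limiting argument it suffices to prove the conclusion for sequences $\{f_k\}_{k=1}^N$ with each $f_k\in L^\infty_c$ and $N$ finite. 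Set
\[ G_N(x)=\bigg(\sum_{k=1}^N |f_k(x)|^q\bigg)^{1/q}, \qquad F_N(x)=\bigg(\sum_{k=1}^N (Mf_k(x))^q\bigg)^{1/q}. \]

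Since we do not know a priori that $F_N\in\Lp(w)$, we truncate as in the discussion of the scalar case: set $\langle F_N\rangle_n=\min(F_N,n)\chi_{B(0,n)}$, which lies in $\Lp(w)$ because $w\in\Lp_{loc}$. Define the extrapolation family
\[ \F=\bigl\{(\langle F_N\rangle_n,\,G_N)\;:\;f_k\in L^\infty_c,\;N\geq 1,\;n\geq 1\bigr\}. \]
For any pair in $\F$ and any $w_0\in A_{p_0}$, the Andersen-John inequality (applied to the finite sequence $f_1,\dots,f_N$) immediately yields
\[ \int_{\R^n}\langle F_N\rangle_n^{p_0}w_0\,dx\leq \int_{\R^n}F_N^{p_0}w_0\,dx\leq C\int_{\R^n}G_N^{p_0}w_0\,dx, \]
which is precisely the hypothesis of Theorem~\ref{thm:diag-weightedvar} for the family $\F$.

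Applying Theorem~\ref{thm:diag-weightedvar} with the given $M$-pair $(\pp,w)$ gives
\[ \|\langle F_N\rangle_n\|_{\Lp(w)}\leq C\|G_N\|_{\Lp(w)}, \]
with $C$ independent of $N$ and $n$. Letting $n\to\infty$ and then $N\to\infty$, and applying Fatou's lemma for variable Lebesgue spaces (\cite[Theorem~2.61]{cruz-fiorenza-book}) twice, we recover the desired inequality. The only step that requires any care is the initial setup of $\F$ with the two truncations, which is needed precisely so that the left-hand side in the hypothesis of the extrapolation theorem is finite; once $\F$ is in place, the proof is a direct application of Theorem~\ref{thm:diag-weightedvar} and poses no further obstacle.
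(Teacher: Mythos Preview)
Your proof is correct and follows exactly the approach the paper intends: invoke the Andersen--John weighted vector-valued inequality as the $A_{p_0}$ hypothesis, build the family $\F$ via the truncation procedure from the ``Applying extrapolation'' subsection, and apply Theorem~\ref{thm:diag-weightedvar} followed by Fatou's lemma. The paper itself does not spell out these details for this corollary, merely asserting that it is immediate; your write-up is a faithful expansion of that remark.
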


This result is not particular to the maximal operator:  such vector-valued inequalities are an immediate consequence of
extrapolation defined in terms of ordered pairs of functions.   This
is proved in the constant exponent case
in~\cite[Corollary~3.12]{cruz-martell-perezBook}, and the same proof
works in our more general setting.

\begin{remark}
In the
same way, though we do not discuss them here, weak-type inequalities
can be proved using extrapolation.
See~\cite[Corollary~3.11]{cruz-martell-perezBook} and
\cite[Corollary~5.33]{cruz-fiorenza-book} for details.
\end{remark}

\begin{remark}
Vector-valued inequalities for the maximal operator play an important
role in studying functions spaces in the variable exponent setting:
see, for example,~\cite{DCU-dw-P2014,MR2498558}.
\end{remark}

\subsection*{Singular integral operators}
Let $T$ be a convolution type singular integral:  $Tf= K*f$, where
$K$ is defined on
$\R^n\setminus \{0\}$ and satisfies $\hat{K} \in L^\infty$ and
\[ |K(x)| \leq \frac{C}{|x|^n}, \quad |\nabla K(x)| \leq
\frac{C}{|x|^{n+1}}, \quad x \neq 0.  \]
More generally, we can take $T$ to be a Calder\'on-Zygmund singular
integral of
the type defined by Coifman and Meyer.
Then for all $p$, $1<p<\infty$, and $w\in A_p$,
\begin{equation} \label{eqn:sio}
 \int_\subRn |Tf(x)|^p w(x)\,dx \leq C\int_\subRn |f(x)|^pw(x)\,dx.
\end{equation}
(See~\cite{duoandikoetxea01,grafakos08b}.) As an immediate consequence we get that singular integrals are bounded
on weighted Lebesgue spaces.

\begin{corollary}
Let $T$ be a Calder\'on-Zygmund singular integral operator. {Then for any $M$-pair $(\pp, w)$,}
\[ \|Tf\|_{\Lp(w)} \leq C\|f\|_{\Lp(w)}. \]
\end{corollary}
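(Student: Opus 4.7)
The plan is to apply Theorem~\ref{thm:diag-weightedvar} with any fixed $p_0 \in (1, \infty)$ (say $p_0 = 2$), using the classical weighted inequality \eqref{eqn:sio} as the hypothesis. Since $(\pp, w)$ is an $M$-pair, Theorem~\ref{thm:wtd-max} gives $w \in A_\pp$, which in particular implies $w \in \Lp_{loc}$. By Lemma~\ref{lemma:density}, $L^\infty_c$ is dense in $\Lp(w)$, so by a standard approximation argument (cf.~\cite[Theorem~5.39]{cruz-fiorenza-book}) it suffices to prove $\|(Tf)w\|_\pp \leq C\|fw\|_\pp$ for $f \in L^\infty_c$.

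Following the schema in the ``Applying extrapolation'' subsection, I would introduce the truncations $\langle Tf \rangle_n = \min(|Tf|, n)\chi_{B(0,n)}$ and consider the family
\[ \F = \{(\langle Tf\rangle_n, |f|) : f \in L^\infty_c,\, n \geq 1\}. \]
Since $w \in \Lp_{loc}$, each $\langle Tf\rangle_n$ lies in $\Lp(w)$, so the left-hand side of the hypothesis of Theorem~\ref{thm:diag-weightedvar} is finite for every pair in $\F$. Moreover, for every $w_0 \in A_{p_0}$ the pointwise bound $\langle Tf\rangle_n \leq |Tf|$ combined with \eqref{eqn:sio} gives
\[ \int_{\R^n} \langle Tf\rangle_n(x)^{p_0} w_0(x)\,dx \leq \int_{\R^n} |Tf(x)|^{p_0} w_0(x)\,dx \leq C\int_{\R^n} |f(x)|^{p_0} w_0(x)\,dx, \]
which verifies the hypothesis of Theorem~\ref{thm:diag-weightedvar}.

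Theorem~\ref{thm:diag-weightedvar} then yields $\|\langle Tf\rangle_n w\|_\pp \leq C\|fw\|_\pp$ with $C$ independent of $n$. Since $\langle Tf\rangle_n \nearrow |Tf|$ pointwise as $n \to \infty$, Fatou's lemma in the variable Lebesgue spaces~\cite[Theorem~2.61]{cruz-fiorenza-book} gives $\|(Tf)w\|_\pp \leq \liminf_n \|\langle Tf\rangle_n w\|_\pp \leq C\|fw\|_\pp$, which completes the reduction. There is no real obstacle: the only substantive input beyond the extrapolation machinery is that $Tf$ is a well-defined measurable function for $f \in L^\infty_c$ (standard for Calder\'on--Zygmund operators), together with the fact that \eqref{eqn:sio} holds for the full range $1 < p_0 < \infty$ so we have complete freedom in choosing the seed exponent.
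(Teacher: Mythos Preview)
Your proposal is correct and follows exactly the approach the paper intends: the corollary is stated as an immediate consequence of the classical weighted inequality~\eqref{eqn:sio} together with the general ``Applying extrapolation'' schema, and you have faithfully reproduced that schema (truncation, the family $\F$, Theorem~\ref{thm:diag-weightedvar}, and Fatou's lemma). There is nothing to add.
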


\medskip

We can also use extrapolation to prove norm inequalities for operators that are more
singular.  Given $1<r\leq \infty$, let $\Omega \in
L^r(S^{n-1})$ satisfy $\int_{S^{n-1}} \Omega(y)\,d\sigma(y) = 0$,
where $S^{n-1}$ is the unit sphere and $\sigma$ is surface measure on
$S^{n-1}$.  Given the kernel $K$
\[ K(x) = \frac{\Omega(x/|x|)}{|x|^n}, \]
define $T_\Omega f=K*f$.  Then for all $p>r'$ and $w\in A_{p/r'}$,
\eqref{eqn:sio} holds for $T_\Omega$~\cite{duoandikoetxea93,watson90}.
This is a limiting case of Theorem~\ref{thm:limited-var}, with
$q_-=r'$ and $q_+=\infty$.  However, it is more straightforward to
apply Theorem~\ref{thm:diag-weightedvar} rescaling.  If we rewrite
\eqref{eqn:sio} as
\begin{equation} \label{eqn:sio-r}
  \int_\subRn \big(|T_\Omega f(x)|^{r'}\big)^{p/r'} w(x)\,dx
\leq C\int_\subRn \big(|f(x)|^{r'}\big)^{p/r'}w(x)\,dx,
\end{equation}
then for any $M$-pair $(\pp, w)$,
\[ \||T_\Omega f|^{r'}w\|_{\Lp} \leq C \||f|^{r'}w\|_{\Lp}. \]
In particular, if we replace $w$ by $w^{r'}$ and $\pp$ by $\pp/r'$,
then by dilation we get a variable exponent analog of
inequality~\eqref{eqn:sio-r}.

\begin{corollary}
Given $\pp$ and $w$ such that {$(\pp/r', w^{r'})$ is an $M$-pair, we have}
\[ \|T_\Omega f\|_{\Lp(w)} \leq C\|f\|_{\Lp(w)}. \]
\end{corollary}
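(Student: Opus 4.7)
The plan is to apply Theorem~\ref{thm:diag-weightedvar} to the rescaled hypothesis \eqref{eqn:sio-r} and then convert the conclusion into a $\Lp(w)$ estimate via a modular scaling identity. Fix any $p_0 > 1$ (say $p_0 = 2$). Then \eqref{eqn:sio-r} asserts that for every $w_0 \in A_{p_0}$,
\[ \int_\subRn \bigl(|T_\Omega f(x)|^{r'}\bigr)^{p_0} w_0(x)\,dx \leq C \int_\subRn \bigl(|f(x)|^{r'}\bigr)^{p_0} w_0(x)\,dx, \]
which places the pair $(|T_\Omega f|^{r'}, |f|^{r'})$ into the setting of Theorem~\ref{thm:diag-weightedvar}.

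Following the approximation scheme described at the start of this section, I would set
\[ \F = \bigl\{ \bigl(\langle |T_\Omega f|^{r'}\rangle_n,\; |f|^{r'}\bigr) : f \in L^\infty_c,\ n \geq 1 \bigr\}, \]
where $\langle h\rangle_n = \min(h, n)\chi_{B(0,n)}$. Each truncated function is bounded and compactly supported, so the left-hand side of \eqref{hyp:diag-weightedvar} is finite for every $w_0 \in A_{p_0}\subset L^1_{loc}$, and the required bound follows from \eqref{eqn:sio-r} together with the pointwise inequality $\langle|T_\Omega f|^{r'}\rangle_n \leq |T_\Omega f|^{r'}$. Invoking Theorem~\ref{thm:diag-weightedvar} with the $M$-pair $(\pp/r', w^{r'})$ gives
\[ \bigl\|\langle |T_\Omega f|^{r'}\rangle_n\, w^{r'}\bigr\|_{\pp/r'} \leq C \bigl\||f|^{r'} w^{r'}\bigr\|_{\pp/r'}, \]
and Fatou's lemma in $\Lp$ (\cite[Theorem~2.61]{cruz-fiorenza-book}) lets $n \to \infty$ to produce the same inequality without the truncation.

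The final step is the modular scaling identity $\|h^{r'}\|_{\pp/r'} = \|h\|_\pp^{r'}$, which is immediate from the computation $\rho_{\pp/r'}(h^{r'}) = \rho_\pp(h)$ and the definition of the Luxemburg norm. Applied to $h = |T_\Omega f|\,w$ and $h = |f|\,w$, it turns the previous estimate into $\|T_\Omega f\|_{\Lp(w)} \leq C \|f\|_{\Lp(w)}$ for every $f \in L^\infty_c$; since the $M$-pair hypothesis forces $w^{r'}\in L^{\pp/r'}_{loc}$ and hence $w \in \Lp_{loc}$, Lemma~\ref{lemma:density} and a routine density argument extend this to all $f \in \Lp(w)$. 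The only real care is the bookkeeping around the twin rescalings $w \mapsto w^{r'}$ and $\pp \mapsto \pp/r'$: the $M$-pair hypothesis is calibrated precisely so that the scaling identity converts the $\pp/r'$ estimate for $|T_\Omega f|^{r'}$ into the desired $\pp$ estimate for $T_\Omega f$.
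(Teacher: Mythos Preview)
Your proposal is correct and follows essentially the same approach as the paper: rescale the weighted hypothesis so that Theorem~\ref{thm:diag-weightedvar} applies to the pairs $(|T_\Omega f|^{r'},|f|^{r'})$, invoke the theorem with the $M$-pair $(\pp/r',w^{r'})$, and then use dilation $\|h^{r'}\|_{\pp/r'}=\|h\|_\pp^{r'}$ to pass to the $\Lp(w)$ estimate. The only difference is cosmetic---the paper first applies the theorem with a generic $M$-pair and then substitutes $\pp\mapsto\pp/r'$, $w\mapsto w^{r'}$, whereas you make the substitution at the outset---and you spell out the truncation/Fatou/density argument that the paper relegates to the general discussion at the beginning of Section~\ref{section:applications}.
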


\subsection*{Off-diagonal operators}
Given $\alpha$, $0<\alpha<n$, the fractional integral operator of
order $\alpha$ (also referred to as the Riesz potential) is the
positive integral operator
\[ I_\alpha f(x) = \int_\subRn \frac{f(y)}{|x-y|^{n-\alpha}}\,dy.\]
The associated fractional maximal operator $M_\alpha$ is defined by
\[ M_\alpha f(x) = \sup_{B} |B|^{\alpha/n} \avgint_B |f(y)|\,dy \cdot
\chi_B(x).  \]
Weighted inequalities for both of these operators are governed by the
$A_{p,q}$ weights in Definition~\ref{defn:pq-weights}:
given $p$, $1<p<n/\alpha$, and $q$ such that
$\frac{1}{p}-\frac{1}{q}=\frac{\alpha}{n}$, then for all $w\in A_{p,q}$,
\[ \left(\int_\subRn |I_\alpha f(x) w(x)|^q \,dx \right)^{1/q}
\leq C \left(\int_\subRn |f(x) w(x)|^p \,dx \right)^{1/p}; \]
the same inequality holds if $I_\alpha$ is replaced by $M_\alpha$~\cite{muckenhoupt-wheeden74}.
Therefore, we can apply Theorem~\ref{thm:off-weightedvar} (using the
obvious variant of the technical reduction discussed at the beginning
of this section) to get the following result.

\begin{corollary}
Given $\alpha$, $0<\alpha<n$, suppose exponents $\pp,\,\qq$ are  such that
$p_+<n/\alpha$ and
$\frac{1}{p(x)}-\frac{1}{q(x)}=\frac{\alpha}{n}$.    Let $\sigma=(n/\alpha)'$.
 Then for all $M$-pairs $(\qq/\sigma, w^{\sigma})$,
\begin{gather*}
\|(I_\alpha f)\|_{\Lq(w)} \leq C\|f\|_{\Lp(w)}, \\
\|(M_\alpha f)\|_{\Lq(w)} \leq C\|f\|_{\Lp(w)}.
\end{gather*}
\end{corollary}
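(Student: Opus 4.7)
The plan is to apply Theorem~\ref{thm:off-weightedvar} to each operator $T \in \{I_\alpha, M_\alpha\}$, taking as base exponents any $p_0, q_0$ with $1 < p_0 < n/\alpha$ and $\tfrac{1}{p_0} - \tfrac{1}{q_0} = \tfrac{\alpha}{n}$. This choice ensures that the $\sigma$ produced by Theorem~\ref{thm:off-weightedvar}, namely $1/\sigma' = 1/p_0 - 1/q_0 = \alpha/n$, agrees with $\sigma = (n/\alpha)'$ in the statement, and matches the off-diagonal relation $\tfrac{1}{p(x)} - \tfrac{1}{q(x)} = \tfrac{\alpha}{n}$ imposed on the variable exponents. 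The classical Muckenhoupt--Wheeden theorem then supplies hypothesis~\eqref{hyp:off-weightedvar} for both $I_\alpha$ and $M_\alpha$.

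Next, following the approximation recipe described at the beginning of the section, I would reduce to $f \in L^\infty_c$, which is dense in $\Lp(w)$ by Lemma~\ref{lemma:density}. The $M$-pair hypothesis gives $w^\sigma \in A_{\qq/\sigma}$, which in particular yields $w \in \Lq_{loc}$, so the truncations $\langle Tf\rangle_n = \min(|Tf|, n)\chi_{B(0,n)}$ lie in $\Lq(w)$. I would then set
\[
\mathcal{F} = \{(\langle Tf\rangle_n, |f|) : f \in L^\infty_c,\ n \ge 1\},
\]
and, since $\langle Tf\rangle_n \le |Tf|$, the classical Muckenhoupt--Wheeden inequality carries over to every pair in $\mathcal{F}$.

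Before invoking Theorem~\ref{thm:off-weightedvar} I must also check that $w \in A_{\pp,\qq}$, and I claim this is equivalent to the given hypothesis $w^\sigma \in A_{\qq/\sigma}$. Indeed, the scaling identity $\|f^\sigma\|_{r(\cdot)} = \|f\|_{\sigma r(\cdot)}^\sigma$ for constant $\sigma$ yields
\[
\|w^\sigma\chi_B\|_{\qq/\sigma} = \|w\chi_B\|_\qq^\sigma, \qquad \|w^{-\sigma}\chi_B\|_{(\qq/\sigma)'} = \|w^{-1}\chi_B\|_{\sigma(\qq/\sigma)'}^\sigma,
\]
and a direct pointwise computation using $1/p(x) - 1/q(x) = 1/\sigma'$ shows that $\sigma (q(x)/\sigma)' = p'(x)$. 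Taking a $\sigma$-th root of the $A_{\qq/\sigma}$ condition on $w^\sigma$ then produces
\[
\sup_B |B|^{-1/\sigma}\|w\chi_B\|_\qq \|w^{-1}\chi_B\|_\cpp < \infty,
\]
which is exactly $w \in A_{\pp,\qq}$ since $1 - 1/\sigma = \alpha/n = \gamma$.

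With all hypotheses of Theorem~\ref{thm:off-weightedvar} in place, applying it to $\mathcal{F}$ yields $\|\langle Tf\rangle_n\|_{\Lq(w)} \le C\|f\|_{\Lp(w)}$ uniformly in $n$. An application of Fatou's lemma in the variable Lebesgue spaces (\cite[Theorem~2.61]{cruz-fiorenza-book}) then gives the desired bound for every $f \in L^\infty_c$, and a standard density argument extends it to all of $\Lp(w)$. The only delicate point I anticipate is the scaling identity and the equivalence $w^\sigma \in A_{\qq/\sigma} \Leftrightarrow w \in A_{\pp,\qq}$; the calculation itself is routine, but some care is needed because the off-diagonal Muckenhoupt class in the variable setting mixes the three exponents $\qq$, $\cpp$, and a fractional $|B|^\gamma$ factor.
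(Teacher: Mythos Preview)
Your proposal is correct and follows essentially the same approach as the paper: the paper simply invokes Theorem~\ref{thm:off-weightedvar} together with the Muckenhoupt--Wheeden weighted inequalities and the ``obvious variant'' of the approximation/truncation argument described at the start of Section~\ref{section:applications}. The equivalence $w^\sigma \in A_{\qq/\sigma} \Leftrightarrow w \in A_{\pp,\qq}$ that you verify is exactly Proposition~\ref{prop:Apq-Ar-var} in the paper, so you are supplying details the paper leaves implicit rather than taking a different route.
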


\begin{remark}
The restriction $p_+<n/\alpha$ is natural for the fractional integral
operator, since in the constant exponent case $I_\alpha$ does not map
$L^{n/\alpha}$ to $L^\infty$.  On the other hand, $M_\alpha$ does;
moreover, in the unweighted case, if $p_+=n/\alpha$, then $\|M_\alpha
f\|_\qq \leq C\|f\|_\pp$.  (See~\cite{MR2493649, cruz-fiorenza-book}.)
Therefore, we conjecture that the same is true in the weighted case;
this question is still open even for $\alpha=0$ and $p_+=\infty$.
\end{remark}

\subsection*{Coifman-Fefferman type inequalities}

There are a variety of norm inequalities that compare two operators,
usually of the form
\[ \int_\subRn |Tf(x)|^p w(x)\,dx \leq C\int_\subRn |Sf(x)|^p
w(x)\,dx,  \]
where $w\in A_\infty$.  The first such inequality, due to Coifman and
Fefferman~\cite{coifman-fefferman74}, compared singular integrals and
the Hardy-Littlewood maximal operator, and there have been a number of results proved since:  see~\cite[Chapter~9]{cruz-martell-perezBook}.    We can
use Theorem~\ref{thm:Ainfty-extrapol} to extend such inequalities to
the weighted variable Lebesgue spaces.

We illustrate this by considering one such inequality in particular,
the Fefferman-Stein inequality for the sharp maximal operator.
(See~\cite{duoandikoetxea01}.)   Recall that the  the sharp maximal
function is defined by
\[ M^\# f(x) = \sup_B \avgint_B |f(y)-f_B|\,dy \cdot \chi_B(x), \]
where $f_B = \avgint_B f(x)\,dx$.  Though pointwise smaller than the
maximal operator, we have that for all $p$,
$0<p<\infty$ and $w\in A_\infty$,
\begin{equation*}
 \int_\subRn Mf(x)^p w(x)\,dx \leq C\int_\subRn M^\#f(x)^p w(x)\,dx.
\end{equation*}
Then by
Theorem~\ref{thm:Ainfty-extrapolvar} we immediately get the
following.

\begin{corollary}
Given $\pp\in \Pp$ and a weight $w$, suppose there exists $s<p_-$ such
that $w^{s} \in A_{\pp/s}$ and $M$ is bounded on $L^{(\pp/s)'}(w^{-s})$.  Then
\[ \|f\|_{\Lp(w)} \leq C\|M^\# f\|_{\Lp(w)}. \]
\end{corollary}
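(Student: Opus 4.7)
The plan is to apply the variable-exponent $A_\infty$ extrapolation theorem (Theorem~\ref{thm:Ainfty-extrapolvar}) to the classical Fefferman-Stein inequality. Recall that for every $p_0$, $0 < p_0 < \infty$, and every $w_0 \in A_\infty$, one has
$$\int_\subRn Mf(x)^{p_0} w_0(x)\,dx \leq C \int_\subRn M^\# f(x)^{p_0} w_0(x)\,dx,$$
whenever the left-hand side is finite (see~\cite{duoandikoetxea01}). This fits the template \eqref{eqn:Ainfty-hyp} with the pair $(Mf, M^\# f)$, so extrapolation should yield $\|Mf\|_{\Lp(w)} \leq C\|M^\# f\|_{\Lp(w)}$; combining this with the pointwise bound $|f(x)| \leq Mf(x)$ a.e.\ then gives the desired conclusion.

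To make this rigorous I would follow the truncation scheme outlined at the beginning of Section~\ref{section:applications}. Given $f \in L^\infty_c$, set $\langle Mf \rangle_n = \min(Mf, n)\chi_{B(0,n)}$. This function is bounded and compactly supported, so it lies in $L^{p_0}(w_0)$ for every $w_0 \in A_\infty$ (such weights being locally integrable) and in $\Lp(w)$ since the hypothesis $w^s \in A_{\pp/s}$ forces $w \in \Lp_{loc}$. Because $\langle Mf \rangle_n \leq Mf$ pointwise, the Fefferman-Stein inequality yields
$$\int_\subRn \langle Mf \rangle_n^{p_0}\, w_0 \,dx \leq C \int_\subRn (M^\# f)^{p_0}\, w_0 \,dx,$$
with $C$ independent of $n$. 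Thus the family
$$\F = \{(\langle Mf \rangle_n, M^\# f) : f \in L^\infty_c,\ n \geq 1\}$$
satisfies hypothesis \eqref{eqn:Ainfty-hyp}. Applying Theorem~\ref{thm:Ainfty-extrapolvar} (with the given $s < p_-$) gives $\|\langle Mf \rangle_n\|_{\Lp(w)} \leq C\|M^\# f\|_{\Lp(w)}$, and Fatou's lemma for variable Lebesgue spaces (\cite[Theorem~2.61]{cruz-fiorenza-book}) lets us send $n \to \infty$ to obtain $\|Mf\|_{\Lp(w)} \leq C\|M^\# f\|_{\Lp(w)}$ for $f \in L^\infty_c$. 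Combined with $|f| \leq Mf$, this is the desired estimate on the dense subset.

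Finally, a standard density argument (cf.~\cite[Theorem~5.39]{cruz-fiorenza-book}), using that $L^\infty_c$ is dense in $\Lp(w)$ by Lemma~\ref{lemma:density}, extends the inequality to all $f \in \Lp(w)$. The only real technical obstacle is confirming that the Fefferman-Stein inequality applies to the truncation $\langle Mf \rangle_n$ with a constant independent of $n$; this is essentially routine but must be done carefully, either by invoking a truncated form of Fefferman-Stein from the literature or by adapting the good-$\lambda$ argument directly. Everything else is a direct deployment of the extrapolation machinery developed in Section~\ref{section:main-theorems}.
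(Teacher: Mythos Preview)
Your approach is the same as the paper's: apply Theorem~\ref{thm:Ainfty-extrapolvar} to the classical Fefferman--Stein inequality, using the truncation scheme from Section~\ref{section:applications}. The paper simply says ``by Theorem~\ref{thm:Ainfty-extrapolvar} we immediately get the following,'' and you have correctly unpacked what that means.

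There is, however, one unnecessary step that creates a small gap. You restrict to $f\in L^\infty_c$ and then invoke a ``standard density argument'' to pass to general $f\in \Lp(w)$. That density argument is not the standard one from \cite[Theorem~5.39]{cruz-fiorenza-book}: there the operator appears on the \emph{left} of the inequality, so boundedness on a dense set extends automatically. Here $M^\#$ sits on the \emph{right}, so to pass to the limit you would need $\|M^\# f_n\|_{\Lp(w)}\to \|M^\# f\|_{\Lp(w)}$, which via $|M^\# f_n - M^\# f|\le 2M(f_n-f)$ would require $M$ bounded on $\Lp(w)$. That is not among your hypotheses, and it does not follow from $A_\infty$ extrapolation.

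The fix is simply to drop the restriction to $L^\infty_c$. Your own observation that $w^s\in A_{\pp/s}$ forces $w\in \Lp_{loc}$ already guarantees $\langle Mf\rangle_n\in \Lp(w)$ for \emph{any} $f$ with $Mf<\infty$ a.e. (in particular for any $f\in \Lp(w)$). So take
\[
\F=\{(\langle Mf\rangle_n, M^\# f): f\in \Lp(w),\ n\ge 1\},
\]
apply Theorem~\ref{thm:Ainfty-extrapolvar} and Fatou exactly as you did, and conclude $\|f\|_{\Lp(w)}\le \|Mf\|_{\Lp(w)}\le C\|M^\# f\|_{\Lp(w)}$ directly for every $f\in \Lp(w)$, with no density step needed.
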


In exactly the same way other Coifman-Fefferman type inequalities can
be extended to the variable Lebesgue space setting.

\subsection*{Operators with a restricted range of exponents}
Certain types of operators are not bounded on $L^p$ for every $p$,
$1<p<\infty$, but only for $p$ in some interval, say $q_-<p<q_+$.  In
this case it is natural to conjecture that such operators are bounded
on $\Lp$ provided that $q_-<p_-\leq p_+<q_+$, and that weighted
inequalities hold in the same range for suitable weights $w$.  Here we
consider two operators: the spherical maximal operator and the
Riesz transforms associated with certain elliptic operators.

The spherical maximal operator is defined by
\[ \M f(x) = \sup_{t>0} \left|\int_{S^{n-1}} f(x-ty)d\sigma(y)\right|,\]
where $S^{n-1}$ is the unit sphere in $\R^n$ and $d\sigma$ is surface
measure on the sphere.
  Stein~\cite{MR0420116} proved that for $n\geq 3$, $\M$ is bounded on $L^p$ if and
  only if $p>\frac{n}{n-1}$.  Weighted norm inequalities are true for
  the same values of $p$, but require strong conditions on the weight.
   Cowling, {\em et al.}~\cite{MR1922609} proved that
if
\[  \frac{n}{n-1}< p < \infty \quad \text{and} \quad
\max\left(0,1-\frac{p}{n}\right) \leq \delta \leq \frac{n-2}{n-1}, \]
and if
\begin{equation} \label{eqn:cowling-wt}
 w = u_1^\delta u_2^{\delta(n-1)-(n-2)},  \qquad u_1,\,u_2 \in
A_1,
\end{equation}
then $\M : L^p(w) \rightarrow L^p(w)$.

If we combine this result with Theorem~\ref{thm:limited-var} we get
the following estimates in the variable Lebesgue spaces.

\begin{corollary} \label{cor:fgk}
Fix $n\geq 3$. Given $\pp \in LH$ such that $\frac{n}{n-1}<p_-\leq p_+< (n-1)p_-$,
then
\begin{equation} \label{eqn:fgk1}
 \|\M f\|_\pp \leq C\|f\|_\pp.
\end{equation}
Moreover, if for some $\sigma>\frac{n-1}{n-2}p_-$, $w^\sigma \in
A_{\frac{\pp}{c\sigma}}$, where $c\in (0,1)$ is as in the statement of
Theorem~\ref{thm:limited-var}, then
\begin{equation} \label{eqn:fgk2}
 \|(\M f)w\|_\pp \leq C\|fw\|_\pp.
\end{equation}
\end{corollary}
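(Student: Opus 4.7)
The plan is to invoke Theorem~\ref{thm:limited-var} with the Cowling--García-Cuerva--Macías weighted estimate playing the role of the single-exponent hypothesis. First I fix $\pp \in LH$ with $\tfrac{n}{n-1} < p_-$ and $p_+ < (n-1)p_-$. Since both gaps are open, there exists $q_- \in [n/(n-1),\, p_-)$ close enough to $p_-$ that $q_+ := (n-1)q_-$ satisfies $q_+ > p_+$; the two assumptions on $\pp$ are exactly what make such a choice possible, and the outcome is the range relation $q_- < p_- \le p_+ < q_+$ demanded by Theorem~\ref{thm:limited-var}.

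The key verification is that, for any $p_0 \in (q_-, q_+)$, the limited-range class $A_{p_0/q_-} \cap RH_{(q_+/p_0)'}$ is contained in Cowling's weight class $\{u_1^\delta u_2^{\delta(n-1)-(n-2)} : u_1, u_2 \in A_1\}$. Using the equivalence $w \in A_r \cap RH_s \Leftrightarrow w^s \in A_{s(r-1)+1}$ together with Jones factorization, every such $w_0$ factors as $v_1^{1/s} v_2^{1-r}$ with $v_1, v_2 \in A_1$, where $r = p_0/q_-$ and $s = (q_+/p_0)'$. Because $q_+/q_- = n-1$ by construction, this is precisely Cowling's form with $\delta = 1 - p_0/q_+ = 1 - p_0/((n-1)q_-)$. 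A short check shows that $\delta$ lies in Cowling's admissible interval $[\max(0, 1 - p_0/n),\, (n-2)/(n-1)]$ exactly because $q_- \ge n/(n-1)$ and $p_0 \in (q_-, q_+)$; Cowling's theorem then delivers \eqref{hyp:limited-var}.

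With this hypothesis in hand (and after the technical reduction outlined at the start of Section~\ref{section:applications}, which replaces $\M h$ by the truncations $\langle \M h\rangle_m = \min(\M h, m)\chi_{B(0,m)}$ so that extrapolation is applied to a family of finite-norm pairs, followed by Fatou's lemma in $\Lp$), the first conclusion of Theorem~\ref{thm:limited-var} yields \eqref{eqn:fgk1}. For \eqref{eqn:fgk2} I use the second conclusion, which provides some $p_* \in (q_-, q_+)$ and a constant $c \in (0,1)$ such that the weighted estimate holds whenever $w^\sigma \in A_{\pp/(c\sigma)}$ with $\sigma = p_* q_-/(p_* - q_-)$. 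Since $q_+ = (n-1)q_-$, as $p_*$ ranges over $(q_-, q_+)$ the quantity $\sigma$ sweeps out $((n-1)q_-/(n-2),\, \infty)$; letting $q_-$ approach $p_-$ from below realizes every $\sigma > \tfrac{n-1}{n-2}p_-$, which is precisely the range stated in the corollary.

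The main obstacle is the weight-class matching at the heart of the proof: realizing $A_{p_0/q_-} \cap RH_{(q_+/p_0)'}$ as a subclass of Cowling's weights through the $A_r \cap RH_s$ factorization, and confirming that the resulting $\delta$ remains in the admissible interval. This single calculation is the point at which both hypotheses on $\pp$ enter decisively --- the lower bound $p_- > n/(n-1)$ forces $q_- \ge n/(n-1)$ (which in turn makes $\delta \ge 1 - p_0/n$), while the aspect-ratio bound $p_+ < (n-1)p_-$ forces $q_+ > p_+$ so that the range condition of Theorem~\ref{thm:limited-var} is satisfied. Everything else amounts to routine bookkeeping and the standard truncation argument needed to place $\M h$ in the variable-exponent space before invoking extrapolation.
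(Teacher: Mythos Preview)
Your argument for the unweighted inequality~\eqref{eqn:fgk1} is essentially the paper's: you identify Cowling's weight class with $A_{p_0/q_-}\cap RH_{(q_+/p_0)'}$ via factorization, choose $q_-$ close to $p_-$ so that $q_-<p_-\le p_+<q_+=(n-1)q_-$, and invoke Theorem~\ref{thm:limited-var}. That part is fine.

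The weighted case has a real gap. You write that ``as $p_*$ ranges over $(q_-,q_+)$ the quantity $\sigma$ sweeps out $((n-1)q_-/(n-2),\infty)$,'' but Theorem~\ref{thm:limited-var} only asserts that \emph{there exists} a $p_*$ (and the remarks following it even indicate $p_*$ is taken close to $q_-$, which forces $\sigma$ to be large, not small). You are not entitled to let $p_*$ vary over the whole interval, so your parameter-sweeping argument does not go through as written.

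The paper closes this gap with two extra ingredients you omitted. First, it applies constant-exponent limited range extrapolation (Theorem~\ref{thm:limited-const}) to upgrade the single hypothesis at $p_0$ to the same hypothesis at \emph{every} $p_0\in(q_-,q_+)$; this is what creates genuine freedom to move $p_0$ close to $q_+$ and hence to make $\tfrac{p_0q_-}{p_0-q_-}$ as small as $\tfrac{(n-1)q_-}{n-2}$. Second, it invokes Proposition~\ref{prop:Ainfty-weaker} to pass from the given $\sigma>\tfrac{n-1}{n-2}p_-$ down to a smaller value for which equality $\sigma=\tfrac{p_0q_-}{p_0-q_-}$ holds, so that Theorem~\ref{thm:limited-var} can be applied with that particular $p_0$ playing the role of $p_*$. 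You need both steps; without them the reduction from an arbitrary $\sigma>\tfrac{n-1}{n-2}p_-$ to the $\sigma$ actually furnished by Theorem~\ref{thm:limited-var} is missing.
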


\begin{proof}
To apply Theorem~\ref{thm:limited-var} we need to restate
the hypotheses of the above weighted norm inequality.
By the information encoded in the factorization of $A_p$ weights
(see~\cite[Theorems~2.1, 2.3, 5.1]{cruz-uribe-neugebauer95}), if $w$
is given by \eqref{eqn:cowling-wt}, then $w\in
A_t\cap RH_{1/\delta}$, where $1-t=\delta(n-1)-(n-2)$ or
$t=(n-1)(1-\delta)$.  Therefore, if we fix any $p_0>\frac{n}{n-1}$, we
have that $w\in A_{p_0/q_-}\cap RH_{(q_+/p_0)'}$, where
\[ q_- = \frac{p_0}{(n-1)(1-\delta)}, \qquad q_+ = \frac{p_0}{1-\delta} =
(n-1)q_-. \]
Conversely, if we take any $w\in A_{p_0/q_-}\cap RH_{(q_+/p_0)'}$,
then it can be written in the form~\eqref{eqn:cowling-wt}.

Given this reformulation we can apply Theorem~\ref{thm:limited-var}.
To prove the unweighted inequality \eqref{eqn:fgk1}, fix $\pp$ such
that $\frac{n}{n-1}<p_-\leq p_+<(n-1)p_-$.  Note that if we fix
$\delta=\frac{n-2}{n-1}$, then $q_-=p_0$, so if we take $p_0=p_-$ and take
values of $\delta$ close to $\frac{n-2}{n-1}$  we see that we can get
$q_-$ as close to $p_-$ as desired.  In particular, we can get
$p_+<(n-1)q_-=q_+$.   Inequality~\eqref{eqn:fgk1} now follows from
inequality~\eqref{result:limited-var2} in Theorem~\ref{thm:limited-var}.

\medskip

To prove the weighted inequality \eqref{eqn:fgk2}, we argue similarly.
Fix $\pp$ and $\sigma>\frac{n-1}{n-2}p_-$.  Now choose a value of
$p_0$ and fix $\,q_-,\,q_+$ as before.  Then we have that
\[ \sigma>\frac{n-1}{n-2}q_-  = \frac{(n-1)q_-^2}{(n-1)q_- - q_-}. \]
We now apply limited range extrapolation in the constant exponent
case, Theorem~\ref{thm:limited-const}; this shows that we can now take
{\em a posteriori} any value $p_0$, $q_-<p_0<q_+=(n-1)q_-$.  In
particular, we can take $p_0$ as close to $(n-1)q_-$ as we want.  Fix
$p_0$ so that
\begin{equation} \label{eqn:sigma-bound}
\sigma \geq \frac{p_0 q_-}{p_0-q_-}.
\end{equation}
By Proposition~\ref{prop:Ainfty-weaker}, if $w^\sigma \in
A_{\frac{\pp}{c\sigma}}$, then the same inclusion holds for any smaller
  value of $\sigma$, so we may assume without loss of generality that
  equality holds in \eqref{eqn:sigma-bound}.   But then we can apply
  Theorem~\ref{thm:limited-var} starting from our new value of $p_0$
  and using this value of $\sigma$ to get~\eqref{eqn:fgk2}.
\end{proof}

Inequality~\eqref{eqn:fgk1} in Corollary~\ref{cor:fgk}
  was originally proved by Fiorenza~{\em et al.}~\cite{MR3057132};
  their proof relied on a extrapolation argument which was a slightly
  weaker, unweighted version of Theorem~\ref{thm:limited-var}.

A surprising feature of this result is that while there are weighted
inequalities for any value of $p>\frac{n}{n-1}$, variable Lebesgue
space bounds only hold for exponents with bounded oscillation.   This
is not an artifact of the proof:  in ~\cite{MR3057132} they
  also proved that if the spherical maximal operator is bounded on
  $\Lp$, then $p_+\leq np_-$; it is conjectured that this bound is sharp.
  To prove this via extrapolation it suffices to show that in
  the above weighted norm inequality we could replace the upper bound
  on $\delta$ by $\frac{n-1}{n}$.    It is unclear if this is
  possible, though we note that in~\cite[p.~83]{MR1922609} they
  conjectured that one could take weights of the form
  $w=u_1^{\frac{n-1}{n}}$ which is a special case.

\bigskip

A second kind of operator that satisfies norm inequalities with a limited range
of exponents is the Riesz transform associated to complex elliptic
operators in divergence form.  We sketch the basic properties of these
operators; for complete information, see Auscher~\cite{auscher2007}.

Let $A$ be an $n\times n$, $n\geq 3$, matrix of
complex-valued measurable functions, and assume that $A$ satisfies the
ellipticity conditions
\[  \lambda | \xi | ^{2}\leq \text{Re}\langle A\xi
    ,\xi \rangle, \quad
  |\langle A\xi ,\eta \rangle |\leq \Lambda |\xi
  ||\eta |, \quad \xi ,\,\eta \in \mathbb{C}^{n}
\quad 0<\lambda < \Lambda. \]
Let $L= -\text{div}A\nabla$.    Then $L$ satisfies an $L^2$ functional
calculus, so that the square root operator $L^{1/2}$ is well defined.
The Kato conjecture asserted that this operator satisfies
\[  \|L^{1/2} f\|_2 \approx \|\nabla f\|_2, \qquad f \in W^{1,2}.  \]
This was proved by Auscher~{\em et
  al.}~\cite{auscher-hofmann-lacey-mcintosh-tchamitchian02}.  As a
consequence of this we have that the Riesz transform associated to
$L$, $\nabla L^{-1/2}$, also satisfies $L^2$ bounds:
\[ \|\nabla L^{-1/2} f \|_2 \leq C\|f\|_2. \]
This operator also satisfies weighted $L^p$ bounds for $p$ close to $2$.
Auscher and Martell~\cite{auscher-martell06} proved that
there exist constants $q_-=q_-(L)<\frac{2n}{n+2}<2$ and $q_+=q_+(L)>2$
such that if $q_-<p<q_+$ and $w\in A_{p/q_-}\cap RH_{(q_+/p)'}$, then
\[ \|\nabla L^{-1/2} f \|_{L^p(w)} \leq C\|f\|_{L^p(w)}. \]
By Theorem~\ref{thm:limited-var} we can
extend this result to the variable Lebesgue spaces.

\begin{corollary}
Given an elliptic operator $L$ as defined above, suppose the exponent $\pp \in LH$ is such that
$q_-(L)<p_-\leq p_+ <q_+(L)$.  Then
\[ \|\nabla L^{-1/2} f \|_\pp \leq C\|f\|_\pp,\]
and for any weight $w$ such that $w^n \in A_{\frac{\pp}{cn}}$, then
\[ \|(\nabla L^{-1/2} f)w \|_\pp \leq C\|fw\|_\pp.\]
\end{corollary}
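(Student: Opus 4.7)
I would apply Theorem~\ref{thm:limited-var} with $q_{\pm}=q_{\pm}(L)$, taking as the starting hypothesis~\eqref{hyp:limited-var} the Auscher--Martell weighted estimate
$\|\nabla L^{-1/2}f\|_{L^{p_{0}}(w_{0})}\le C\|f\|_{L^{p_{0}}(w_{0})}$,
which they establish for every $p_{0}\in(q_{-},q_{+})$ and every $w_{0}\in A_{p_{0}/q_{-}}\cap RH_{(q_{+}/p_{0})'}$. Following the template in the ``Applying extrapolation'' subsection, and to sidestep the usual issue that the left-hand side of the conclusion need not be finite \emph{a priori}, I would work with the family
\[ \F = \{(\langle\nabla L^{-1/2}f\rangle_{k},|f|) : f\in C_{c}^{\infty}(\R^{n}),\ k\ge 1\}, \]
where $\langle h\rangle_{k}=\min(|h|,k)\chi_{B(0,k)}$. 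Each truncation lies in $\Lp(w)$ since $w\in\Lp_{loc}$, and because $\langle h\rangle_{k}\le|h|$ it satisfies the same weighted $L^{p_{0}}$ bound as $\nabla L^{-1/2}f$. Fatou's lemma in $\Lp$ and the density of $C_{c}^{\infty}$ (Lemma~\ref{lemma:density}) then extend the extrapolated conclusion to $\nabla L^{-1/2}f$ itself.

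The unweighted estimate $\|\nabla L^{-1/2}f\|_{\pp}\le C\|f\|_{\pp}$ is immediate from~\eqref{result:limited-var1}, since $\pp\in LH$ and $q_{-}(L)<p_{-}\le p_{+}<q_{+}(L)$ by assumption. For the weighted estimate the key is to tune $p_{*}$ in Theorem~\ref{thm:limited-var} so that the parameter $\sigma=\frac{p_{*}q_{-}}{p_{*}-q_{-}}$ in~\eqref{result:limited-var2} equals the value $n$ appearing in the statement. Solving, I would take
\[ p_{*} = \frac{nq_{-}}{n-q_{-}}, \]
so that $\sigma=n$ by construction. The hypothesis $w^{n}\in A_{\pp/(cn)}$ is then exactly $w^{\sigma}\in A_{\pp/(c\sigma)}$, and~\eqref{result:limited-var2} yields $\|(\nabla L^{-1/2}f)w\|_{\pp}\le C\|fw\|_{\pp}$.

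The one non-formal point is verifying that this $p_{*}$ actually lies in $(q_{-},q_{+})$, so that Theorem~\ref{thm:limited-var} genuinely applies. The inequality $p_{*}>q_{-}$ is immediate from $n > n-q_-$, and $p_{*}<q_{+}$ reduces algebraically to $q_{-}<\frac{2n}{n+2}$, which is precisely the range $q_{-}(L)<\frac{2n}{n+2}<2<q_{+}(L)$ built into the Auscher--Martell theorem. I expect this exponent-matching to be the main substantive step; everything else is the routine cutoff-and-Fatou bookkeeping described in Section~\ref{section:applications}, analogous to the reduction in the proof of Corollary~\ref{cor:fgk}.
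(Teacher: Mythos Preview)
Your proposal is correct and follows essentially the same strategy as the paper: apply Theorem~\ref{thm:limited-var} with $q_\pm=q_\pm(L)$, using the Auscher--Martell weighted bound as input, and arrange the parameters so that $\sigma=n$. The only difference is in how $\sigma=n$ is obtained. The paper fixes $p_*=p_0=2$, which gives $\sigma=\frac{2q_-}{2-q_-}<n$, and then invokes Proposition~\ref{prop:Ainfty-weaker} to pass to the larger value $\sigma=n$ (equivalently, replacing $q_-$ by its upper bound $\frac{2n}{n+2}$ in the formula for $\sigma$). You instead solve for $p_*=\frac{nq_-}{n-q_-}$ so that $\sigma=n$ directly, and verify $p_*\in(q_-,q_+)$ via $p_*<2<q_+$, which is exactly the computation $q_-<\frac{2n}{n+2}$. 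Your route avoids the auxiliary appeal to Proposition~\ref{prop:Ainfty-weaker} and is slightly more direct; the paper's route has the minor advantage that $p_*=2$ is the value at which the Auscher--Martell estimate is most naturally stated. Either way the substance is the same.
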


\begin{proof}
The unweighted inequality is immediate.  For the weighted inequality
we take $p_0=2$, and we take a larger value for $\sigma$ (possible by
Proposition~\ref{prop:Ainfty-weaker}) by replacing $q_-$ by the upper
bound $\frac{2n}{n+2}$.   This gives $\sigma=n$.
\end{proof}

\begin{remark}
Bongioanni {\em et al.}~\cite{MR2720705} introduced a class of weights that generalize
the Muckenhoupt $A_p$ weights and are the appropriate class for
studying weighted norm inequalities for the Riesz transforms related
to Schr\"odinger operators which in many cases satisfy limited range
inequalities.  They also showed that the theory of  extrapolation
could be extended to these weight classes~\cite{MR3042701}.  It would
be of interest to determine if their results could be extended to the
appropriate scale of weighted variable Lebesgue spaces.
\end{remark}

\section{The General Approach to Extrapolation}
\label{section:general}

In this section we give a broad overview of the way in which we prove
each of our extrapolation theorems.  We have
chosen to organize the arguments in a way which does not yield the most
elegant proof but which does make clearer the process by which we
found the proof.   This discussion should be seen as a complement to
the overview of extrapolation given
in~\cite[Chapter~2]{cruz-martell-perezBook}; we believe that it will
be useful for attempts to prove extrapolation theorems in other
contexts.

All of our proofs use five basic tools:  dilation, duality, H\"older's
inequality, reverse
factorization and the Rubio de Francia algorithm.   By dilation we
mean the property that for any exponent $\pp$ and any $s>0$,
$\|f\|_\pp^s = \||f|^s\|_{\pp/s}$.
For constant exponents this is trivial, and even for general exponent
functions it is an immediate consequence of the
definition~\eqref{eqn:defn-norm}.   By duality
(see \cite[Section~2.8]{cruz-fiorenza-book}) we have that given $f\in
\Lp$, there exists $h\in L^\cpp$, $\|h\|_\cpp=1$, such that
\[ \|f\|_\pp \leq C\int_\subRn f(x)h(x)\,dx; \]
conversely, by H\"older's inequality
\cite[Section~2.4]{cruz-fiorenza-book}, if $f\in \Lp$ and $h\in
L^\cpp$, then
\[ \int_\subRn |f(x)h(x)|\,dx \leq C\|f\|_\pp\|h\|_\cpp. \]
(In both cases the constant depends only on $\pp$.)  To construct the
weight $w\in A_{p_0}$ needed to apply the hypothesis, we use reverse
factorization:  the property that if $\mu_1,\,\mu_2 \in A_1$, then
$w_0=\mu_1\mu_2^{1-p_0} \in A_{p_0}$.
(See~\cite[Prop.~7.2]{duoandikoetxea01}.)  Finally, to find the $A_1$
weights we apply the Rubio de Francia extrapolation algorithm in the
following form.

\begin{prop}\label{prop:H_j}
  Given $\rr \in \Pp$, suppose ${\mu}$ is a weight such that $M$ is
  bounded on $L^{\rr}({\mu})$. For a positive function $h \in L_{loc}^1$,
  with $Mh(x)< \infty$ almost everywhere, define:
	\[ \Rj h(x) = \sum_{k = 0}^{\infty} \frac{M^{k}h(x)}{2^k \| M
          \|_{L^\rr({\mu})}^k}. \]
Then: $(1)$ $h(x)\leq \Rj h(x)$;
$(2)$ $\|\Rj h\|_{{L^{\rr}(\mu)}} \leq 2\|h\|_{{L^{\rr}(\mu)}}$;
$(3)$ $\Rj h \in A_1$, with $[\Rj h]_{A_1} \leq 2 \| M \|_{{L^{\rr}(\mu)}}$.
More generally, for fixed constants $\alpha > 0$ and $\beta \in \R$, and another weight $w$, define the operator
	\[ H = Hh = \Rj (h^{\alpha} w^{\beta})^{1/\alpha} w^{-\beta/\alpha}. \]
Then: $(1)$  $h(x) \leq H(x)$;
$(2)$ {Let $v = w^{\beta/\alpha} \mu^{1/\alpha}$. Then $H$ is bounded on $L^{\alpha \rr}(v)$, with $\|H\|_{L^{\alpha \pp}({v})} \leq
  2\|h\|_{L^{\alpha \pp}({v})}$};
$(3)$ $H^{\alpha} w^{\beta} \in A_1$, with $[H^{\alpha}
  w^{\beta}]_{A_1} \leq 2 \| M \|_{L^{\rr}({\mu})}$.
\end{prop}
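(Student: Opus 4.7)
The plan is to prove the first half directly by the usual Rubio de Francia iteration argument, and then derive the second half as a formal consequence by applying the first half to the auxiliary function $h^{\alpha}w^{\beta}$, using only the dilation identity $\|f\|_{\pp}^{s} = \| |f|^{s} \|_{\pp/s}$ to translate norms between $L^{\rr}(\mu)$ and $L^{\alpha\rr}(v)$.

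For the first half, property (1) is immediate from the $k=0$ term of the series. For (2) I would apply the triangle inequality in $L^{\rr}(\mu)$ termwise, use that $\|M^{k}h\|_{L^{\rr}(\mu)} \leq \|M\|_{L^{\rr}(\mu)}^{k}\|h\|_{L^{\rr}(\mu)}$ by iterating the hypothesis that $M$ is bounded on $L^{\rr}(\mu)$, and then sum the resulting geometric series $\sum_{k\geq 0} 2^{-k} = 2$. For (3) I would use the sublinearity of $M$ to get
\[
M(\Rj h)(x) \leq \sum_{k=0}^{\infty} \frac{M^{k+1}h(x)}{2^{k}\|M\|_{L^{\rr}(\mu)}^{k}} = 2\|M\|_{L^{\rr}(\mu)} \sum_{j=1}^{\infty} \frac{M^{j}h(x)}{2^{j}\|M\|_{L^{\rr}(\mu)}^{j}} \leq 2\|M\|_{L^{\rr}(\mu)}\,\Rj h(x),
\]
which is exactly the $A_{1}$ inequality with the claimed constant.

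For the second half I would apply the first half to $h^{\alpha}w^{\beta}$ in place of $h$, which is legitimate provided $h^{\alpha}w^{\beta} \in L^{\rr}(\mu)$ and $M(h^{\alpha}w^{\beta})$ is finite almost everywhere; by the dilation identity, $h \in L^{\alpha\rr}(v)$ if and only if $h^{\alpha}w^{\beta} \in L^{\rr}(\mu)$, so this is the natural integrability assumption. Part (1) follows at once, since $h^{\alpha}w^{\beta} \leq \Rj(h^{\alpha}w^{\beta})$ implies $h \leq \Rj(h^{\alpha}w^{\beta})^{1/\alpha}w^{-\beta/\alpha} = H$. Part (3) is also immediate once one notices that $H^{\alpha}w^{\beta} = \Rj(h^{\alpha}w^{\beta})$, so the $A_{1}$ bound comes directly from part (3) of the first half. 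For part (2) I would compute
\[
\|Hv\|_{\alpha\rr} = \bigl\| \bigl(\Rj(h^{\alpha}w^{\beta})\,\mu\bigr)^{1/\alpha} \bigr\|_{\alpha\rr} = \|\Rj(h^{\alpha}w^{\beta})\|_{L^{\rr}(\mu)}^{1/\alpha},
\]
where the second equality uses dilation with $s = 1/\alpha$; then apply part (2) of the first half, and dilate back with $s = \alpha$ to recognize $\|h^{\alpha}w^{\beta}\|_{L^{\rr}(\mu)}^{1/\alpha} = \|hv\|_{\alpha\rr}$.

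There is no serious obstacle here: the structure is entirely formal once one has the Rubio de Francia algorithm in the first half, and the only real subtlety is tracking the dilation exponents so that the pairs $(\rr,\mu)$ and $(\alpha\rr, v)$ correspond correctly under the substitution $f \mapsto f^{\alpha}w^{\beta}$. The one point worth flagging is that the constant arising from this computation is naturally $2^{1/\alpha}$ rather than $2$, so the stated bound should be read as a constant depending only on $\alpha$; in applications this is absorbed into the final extrapolation constant without consequence.
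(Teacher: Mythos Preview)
Your proposal is correct and follows essentially the same approach as the paper, which simply sketches that (1) is immediate, (2) follows from boundedness of $M$, (3) from sublinearity, and the properties of $H$ follow from those of $\Rj$ together with dilation. Your observation about the constant is accurate: the dilation argument naturally yields $2^{1/\alpha}$ rather than $2$ in part (2) for $H$, which the paper's statement glosses over; as you note, this is immaterial for the applications.
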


\begin{proof}
  The proof is straightforward and essentially the same as in the
  constant exponent case
  (see~\cite[Chapter~2]{cruz-martell-perezBook}): property (1) for $\Rj$
  is immediate; property (2) follows from our assumption that $M$ is
  bounded; and property (3) follows from the fact that $M$ is
  sublinear.  The properties of $H$ are immediate consequences of
  dilation and
  those for~$\Rj$.
\end{proof}

To prove our extrapolation theorems we use these tools to reduce the
quantity we want to estimate (e.g., the lefthand term in
\eqref{result:diag-weightedvar}, \eqref{result:off-weightedvar},
\eqref{result:limited-var1}, \eqref{result:limited-var2}) to something we can apply our hypothesis to (e.g., a weighted integral in the form of the lefthand side of \eqref{hyp:diag-weightedvar}, \eqref{hyp:off-weightedvar},
\eqref{hyp:limited-var}). Let us use Theorem~\ref{thm:diag-weightedvar} as an example. We first fix a weight $w$
satisfying our hypotheses and a pair $(f,g)\in \F$.  For technical
reasons we introduce a new function $h_1$ that depends on both $f$ and
$g$: intuitively, $h_1=g$, but we introduce a term involving $f$ so
that we can prove that the integral corresponding to the lefthand side
of the weighted norm inequality in the hypothesis is finite.  We also
define it to have uniformly bounded norm.  We majorize it by an
operator $H_1$ with constants $\alpha_1$ and $\beta_1$ to be
determined.  If we first apply dilation with an exponent $s>0$ and
then duality, we get a function $h_2$, also with uniformly bounded
norm, which we majorize by a second operator $H_2$ with constants
$\alpha_2$ and $\beta_2$.  We multiply and
divide by $H_1^\gamma$, $\gamma>0$, and apply H\"older's inequality to get, for
example,
\[ \|f\|_{\Lp(w)}^s \leq
\left(\int_\subRn f^{p_0} H_1^{-\gamma {(p_0/s)}} \,H_2 w^s\,dx\right)^{s/p_0}
\left(\int_\subRn H_1^{\gamma(p_0/s)'} \,H_2
  w^s\,dx\right)^{1/(p_0/s)'}. \]

Our goal is to show that the second integral is uniformly bounded, and
the first is bounded by the righthand side of our desired conclusion.
To do so we need to find appropriate values for the six undetermined parameters:  $\alpha_j$, $\beta_j$,
$1\leq j \leq 2$, $s$ and $\gamma$.   These parameters are subject to
the following constraints:
\begin{enumerate}

\item  Since we know which (unweighted) variable Lebesgue space $h_2$
  belongs to (e.g., $h_2 \in L^{(\frac{\pp}{s})'}$), we will assume that $H_2=\Rj_2 (h_2^{\alpha_2} w^{\beta_2})^{1/\alpha_2} w^{-\beta_2/\alpha_2}$
  is bounded there too. We can then use
  Proposition~\ref{prop:H_j} ``backwards'' (i.e., set $v=1$,
  $(\frac{\pp}{s})'=\alpha_2 \rr$ and solve for $\mu$) to deduce that we need the
  maximal operator $M$ bounded on $L^{(\pp/s)'/\alpha_2}(w^{-\beta_2})$.
  This gives constraints on $\alpha_2$ and $\beta_2$.

\item Similarly, we want $H_1=\Rj_1 (h_1^{\alpha_1}
  w^{\beta_1})^{1/\alpha_1} w^{-\beta_1/\alpha_1}$ to be bounded on
  the same space in which $h_1$ is contained, and again by
  Proposition~\ref{prop:H_j} (taking $v=w$ and $\pp=\alpha_1\rr$) this
  means that we need $M$ to be bounded on
  $L^{\pp/\alpha_1}(w^{\alpha_1 - \beta_1})$.  This gives constraints
  on $\alpha_1$, $\beta_1$ and $\gamma$.

\item Lastly, to apply our hypothesis, we need $H_1^{-\gamma {(p_0/s)}} \,H_2
  w^s$ to satisfy the $A_{p_0}$ condition.  To apply reverse
  factorization (since $H_1$ and $H_2$ both yield $A_1$ weights) we
  get more constraints on all the parameters (in particular
  on $s$).
\end{enumerate}
If we combine all of these constraints we are able to find sufficient
conditions on the exponent $\pp$ and the weight $w$ to get the desired conclusion.

In each of the proofs in Section~\ref{section:extrapol-proof} below,
we follow this schema.   Some of the parameters described above have
their values determined, but others are still free.   For our first
three theorems we will prove
a (seemingly) more general result, in the sense that we will show that
the desired weighted norm inequality holds for a family of weight
classes parameterized by $\beta_1$ (the constant from $H_1$) and $s$
(the constant that determines the dual space).  We will get the stated
result by choosing appropriate values for these parameters.

For Theorem~\ref{thm:diag-weightedvar} one can see the choice of the
parameters as simply what is necessary to get the result that is the
obvious analog of the classical Rubio de Francia extrapolation
theorem.  However, we will also show, in the special case of power
weights, that our choice of parameters is in some sense optimal.  The
proof of off-diagonal extrapolation,
Theorem~\ref{thm:off-weightedvar}, will follow the same pattern.
However, the proof has some technical difficulties related to the
variable Lebesgue space norm, and requires more care in choosing the
parameters.

For both Theorems~\ref{thm:diag-weightedvar}
and~\ref{thm:off-weightedvar}, the proofs would be simpler if we had
simply fixed our parameters initially, without motivating our choices.
Indeed, we admit that when we first proved each result we chose our
parameters in an {\em ad hoc} fashion, justifying our choices by the
fact that we got the desired outcome.  However, in proving limited
range extrapolation, Theorem~\ref{thm:limited-var}, we discovered that
the ``right'' parameters were not obvious: none of our initial choices
led to a meaningful result, let alone one analogous to the constant
exponent case.  Ultimately we used the approach outlined above in order to
discover what was actually going on.  We have chosen to retain it here
since it both illuminates our final result and makes clear why the
constant exponent theorem does not immediately generalize to the
variable space setting.  But then, in order to help the reader
understand our approach, we chose to write the previous two proofs in
this more general fashion.

Finally, extrapolation with $A_\infty$ and $A_1$ weights,
Theorems~\ref{thm:Ainfty-extrapolvar} and~\ref{thm:A1var}, requires
some minor modification to our general approach; we will make these clear in
the course of the proofs.

%-----------------------End of General Approach---------------------------------------------

\section{Proof of Theorems}
\label{section:extrapol-proof}
In this section we give the proofs of all the results in
Section~\ref{section:main-theorems}.

\subsection*{Proof of Theorem \ref{thm:diag-weightedvar}}
When $p_0=1$, Theorem \ref{thm:diag-weightedvar} is a special case of
Theorem~\ref{thm:A1var}, so here we will assume $p_0>1$.
We will prove the following
proposition.

\begin{prop} \label{prop-diag}
Suppose \eqref{hyp:diag-weightedvar} holds for some $p_0>1$.  Fix $\pp
\in \Pp$, $\beta_1 \in \R$ and choose any  $s$ such that
    \begin{align}\label{eqn:s-diagonal}
    \max\big(0,p_0 - p_-(p_0 - 1)\big) < s < \min(p_-, p_0).
    \end{align}
    Let $\alpha_1 = \frac{p_0 - s}{p_0 - 1}$ and $\beta_2 = s - \beta_1(1 -
    p_0)$. If $M$ is bounded on $L^{\pp/\alpha_1}(w^{\alpha_1 -
      \beta_1})$ and $L^{(\pp/s)'} (w^{-\beta_2})$, then $\| fw
    \|_{\pp} \leq C \| gw \|_{\pp}$.
  \end{prop}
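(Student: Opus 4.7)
The plan is to follow the five-tool schema of Section~\ref{section:general}: dilation, duality, H\"older's inequality, reverse factorization, and the Rubio de Francia iteration of Proposition~\ref{prop:H_j}. We may assume $\|gw\|_\pp < \infty$, since otherwise the conclusion is trivial. Because $s < p_-$, the dual exponent $(\pp/s)'$ is well defined; dilation together with duality produces a nonnegative $h_2$ with $\|h_2\|_{(\pp/s)'} = 1$ satisfying
\begin{equation*}
\|fw\|_\pp^s = \|f^s w^s\|_{\pp/s} \leq C \int_\subRn f^s w^s h_2 \, dx.
\end{equation*}

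Next, I would invoke Proposition~\ref{prop:H_j} twice. Setting $h_1 = g$, the hypothesis that $M$ is bounded on $L^{\pp/\alpha_1}(w^{\alpha_1 - \beta_1})$ corresponds (with $v = w$ in the proposition) to a majorant $H_1 \geq g$ satisfying $\|H_1 w\|_\pp \leq 2\|gw\|_\pp$ and $H_1^{\alpha_1} w^{\beta_1} \in A_1$ uniformly. The hypothesis that $M$ is bounded on $L^{(\pp/s)'}(w^{-\beta_2})$ corresponds (with $v = 1$ and $\alpha_2 = 1$) to a majorant $H_2 \geq h_2$ satisfying $\|H_2\|_{(\pp/s)'} \leq 2$ and $H_2 w^{\beta_2} \in A_1$ uniformly. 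Reverse factorization then yields
\begin{equation*}
w_0 := (H_2 w^{\beta_2})(H_1^{\alpha_1} w^{\beta_1})^{1 - p_0} \in A_{p_0},
\end{equation*}
and the identities $\alpha_1(p_0 - 1) = p_0 - s$ and $\beta_2 + \beta_1(1 - p_0) = s$ collapse this to the working form
\begin{equation*}
w_0 = H_2 w^s H_1^{-(p_0 - s)}, \qquad \text{equivalently} \quad H_2 w^s = w_0 H_1^{p_0 - s}.
\end{equation*}

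Using $h_2 \leq H_2$ and this identity, I would apply H\"older's inequality with exponents $p_0/s$ and $p_0/(p_0 - s)$:
\begin{equation*}
\int_\subRn f^s w^s h_2 \, dx \leq \int_\subRn f^s w_0 H_1^{p_0 - s} \, dx \leq \Bigl(\int_\subRn f^{p_0} w_0 \, dx\Bigr)^{s/p_0} \Bigl(\int_\subRn H_2 w^s H_1^s \, dx\Bigr)^{(p_0 - s)/p_0}.
\end{equation*}
The hypothesis \eqref{hyp:diag-weightedvar} replaces the first factor by its analog for $g$; then $g \leq H_1$ cancels $H_1^{-(p_0 - s)}$ against $g^{p_0 - s}$ inside $w_0$, reducing $\int g^{p_0} w_0 \, dx$ to $\int g^s H_2 w^s \, dx$, which H\"older in $L^{\pp/s}$ bounds by $C\|gw\|_\pp^s \|H_2\|_{(\pp/s)'} \leq C\|gw\|_\pp^s$. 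The second factor is handled identically, using $\|H_1 w\|_\pp \leq 2\|gw\|_\pp$ and $\|H_2\|_{(\pp/s)'} \leq 2$. Combining yields $\|fw\|_\pp^s \leq C\|gw\|_\pp^s$, and extracting the $s$-th root finishes the proof.

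The main obstacle is coordinating the parameters so that all three constraints hold simultaneously: the two $M$-boundedness hypotheses (one for each $H_j$), the algebraic identity $w_0 H_1^{p_0 - s} = H_2 w^s$ (which fixes $\alpha_1 = (p_0 - s)/(p_0 - 1)$ and $\beta_2 = s + \beta_1(p_0 - 1)$), and the H\"older steps (which demand $s < p_0$, $s < p_-$, and $\alpha_1 < p_-$). The last translates to $s > p_0 - p_-(p_0 - 1)$, so the range \eqref{eqn:s-diagonal} is exactly the admissible one; no further restrictions on $\beta_1$ are needed because $\beta_1$ is absorbed into $\beta_2$ via the identity above.
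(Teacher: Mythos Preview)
Your outline matches the paper's five-tool schema and the algebra of the parameters is correct, but there is one genuine gap: your choice $h_1 = g$.  Recall the paper's convention that the hypothesis \eqref{hyp:diag-weightedvar} is only asserted for pairs $(f,g)$ whose \emph{left-hand side is finite}.  Thus, before you can ``replace the first factor by its analog for $g$'' you must verify that
\[
\int_\subRn f^{p_0} w_0\,dx = \int_\subRn f^{p_0} H_2\, w^s H_1^{-(p_0-s)}\,dx < \infty.
\]
With $H_1$ built from $g$ alone you have no pointwise domination of $f$ by $H_1$, so there is no way to control $f^{p_0} H_1^{-(p_0-s)}$; this integral can in principle be infinite, and the hypothesis then says nothing.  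This is exactly the ``technical reason'' flagged in Section~\ref{section:general}: intuitively $h_1 = g$, but one must also fold in a copy of $f$.

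The fix is the paper's: set
\[
h_1 = \frac{f}{\|fw\|_\pp} + \frac{g}{\|gw\|_\pp},
\]
so that $\|h_1 w\|_\pp \leq 2$ and, crucially, $f \leq \|fw\|_\pp\,H_1$.  The latter gives $f^{p_0} H_1^{-(p_0-s)} \leq \|fw\|_\pp^{p_0-s} f^s$, whence the integral above is bounded by $\|fw\|_\pp^{p_0-s}\int f^s w^s H_2\,dx \leq C\|fw\|_\pp^{p_0}\|H_2\|_{(\pp/s)'} < \infty$, justifying the use of \eqref{hyp:diag-weightedvar}.  After this adjustment the rest of your argument --- the H\"older split, the cancellation $g^{p_0} H_1^{-(p_0-s)} \leq \|gw\|_\pp^{p_0-s} g^s$, and the final estimate of both factors via $\|H_1 w\|_\pp$ and $\|H_2\|_{(\pp/s)'}$ --- goes through unchanged, and is essentially the paper's proof.
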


  The constant $s$ comes from duality and the constants $\alpha_j$ and
  $\beta_j$ are from using
  Proposition~\ref{prop:H_j} to define $H_1$ and $H_2$; the values and
  constraints are the only ones which arise in applying the method
  outlined in Section~\ref{section:general}.

To prove Theorem~\ref{thm:diag-weightedvar} it is enough to take $s=1$
and $\beta_1=0$.  Then \eqref{eqn:s-diagonal} holds (since $p_0>1$)
and the conditions on the maximal operator reduce to saying that $M$ is bounded on
$\Lp(w)$ and $L^\cpp(w^{-1})$:  that is, that $(\pp,w)$ is an
$M$-pair.  We will consider other choices of parameters in
Remark~\ref{remark:optimal} below.

\begin{proof}
Let $(f, g) \in \mathcal{F}$ with $\|f\|_{\Lp(w)}<\infty$.  Without loss
of generality we may assume $\|f\|_{\Lp(w)}>0$ and
$\|g\|_{\Lp(w)}<\infty$ since otherwise there is nothing to prove.  We
may also assume $\|g\|_{\Lp(w)}>0$: otherwise, $g(x)=0$ almost
everywhere, and so by our assumption~\eqref{hyp:diag-weightedvar}
(perhaps via an approximation argument like the one in
Section~\ref{section:applications}) we get that $f(x)=0$ a.e. Define
    \[ h_1 = \frac{f}{\| f \|_{\Lp(w)}} + \frac{g}{\| g  \|_{\Lp(w)}}. \]
Then $h_1 \in L^{\pp}(w)$ and $\|h_1\|_{\Lp(w)}\leq 2$.

We will use Proposition~\ref{prop:H_j} to define the two operators
$H_1$ and $H_2$,
\begin{equation} \label{eqn:H1-H2}
 H_1 = \Rj_1(h_1^{\alpha_1}
w^{\beta_1})^{1/\alpha_1}w^{-\beta_1/\alpha_1}, \qquad
 H_2 = \Rj_2(h_2^{\alpha_2}
w^{\beta_2})^{1/\alpha_2}w^{{-}\beta_2/\alpha_2},
\end{equation}
where $h_2$ will be  fixed momentarily.
Fix $s$, $0<s< \max(p_0,p_-)$.   By dilation,
duality and H\"older's inequality, there exists $h_2 \in
L^{(\pp/s)'}$, $\|h_2\|_{(\pp/s)'}=1$, such that for any $\gamma
> 0$,
    \begin{align} \label{eqn:first-reduction}
    \| fw \|_{\pp}^s
        &\leq C\int_\subRn f^s w^s h_2 \,dx  \leq \int_\subRn f^s H_1^{\gamma} H_1^{-\gamma} H_2 w^s \,dx \\ \notag
        &\leq C\left( \int_\subRn f^{p_0} H_1^{-\gamma (p_0/s)} H_2 w^s \,dx
        \right)^{s/p_0}
\left( \int_\subRn H_1^{\gamma (p_0/s)'} H_2 w^s \,dx
\right)^{1/(p_0/s)'}  \\ \notag
& = I_1^{s/p_0} I_2^{1/(p_0/s)'}.
    \end{align}

    We will first find assumptions that let us show that $I_2$ is uniformly bounded. Since $h_1\in
    L^{\pp}(w)$ and $h_2 \in L^{(\pp/s)'}$,  we must have that $H_1$ and
    $H_2$ are bounded on these spaces.   To get the norm of $H_2$ in
    $L^{(\pp/s)'}$ we  apply H\"older's inequality with exponent
    $\pp/s$ to get
\[ I_2 \leq C\| H_1^{\gamma (p_0/s)'} w^s \|_{\pp/s} \| H_2 \|_{(\pp/s)'}. \]
To use our assumption that $H_1$ is bounded on $L^{\pp}(w)$ we need to
fix $\gamma = \frac{s}{(p_0/s)'}$.  Then by dilation and the
properties of $H_1$ and $H_2$ in Proposition~\ref{prop:H_j} we have that
\begin{gather*}
 \| H_1^{\gamma (p_0/s)'} w^s \|_{\pp/s} =
\|H_1 w \|_{\pp}^s \leq 2^s\| h_1 w \|_{\pp}^s \leq 4^s \\
\intertext{and}
\| H_2 \|_{(\pp/s)'} \leq 2\| h_2 \|_{(\pp/s)'} = 2.
\end{gather*}
For $H_1$ and $H_2$ to be bounded on these spaces, by Proposition
\ref{prop:H_j}, we must have that the maximal operator satisfies
\begin{equation} \label{eqn:M-constraints}
M \text{ bounded on } L^{\pp/\alpha_1} (w^{\alpha_1 - \beta_1}) \text{
  and }
L^{(\pp/s)'/\alpha_2} (w^{-\beta_2}).
\end{equation}
A necessary condition for
this is that
that $p_-/\alpha_1>1$ and $[(\pp/s)'/\alpha_2]_->1$, or equivalently,
\[  p_- > \alpha_1, \qquad  (p_+/s)' > \alpha_2.  \]

We must now estimate $I_1$; with our choice of $\gamma$ it can be
written as
\[    I_1 = \int_\subRn f^{p_0} H_1^{s - p_0} H_2 w^s \,dx. \]
In order to apply \eqref{hyp:diag-weightedvar}, we  must show that $I_1$ is finite. Since $h_1 \leq H_1$, by H\"{o}lder's inequality
  \begin{multline*}
    I_1
\leq \int_\subRn  f^{p_0} \left( \frac{f}{\| fw \|_{\pp}} \right)^{{s} -
  p_0} H_2 w^{{s}} \,dx \\
=\| fw \|_{\pp}^{p_0 - {s}} \int_\subRn f^{{s}} w^{{s}} H_2 \,dx
\leq \| fw \|_{\pp}^{p_0 - {s}} \| fw \|_{\pp}^s \| H_2 \|_{{(\pp/s)'}} < \infty.
\end{multline*}
Suppose for the moment that $w_0 = H_1^{s - p_0} H_2 w^s \in A_{p_0}$;
then we can use \eqref{hyp:diag-weightedvar} to estimate $I_1$.
Again since $h_1 \leq H_1$ and by H\"{o}lder's inequality,
\begin{multline*}
  I_1
\leq C\int_\subRn g^{p_0} H_1^{{s} - p_0} H_2 w^{{s}} \,dx
 \leq C\int_\subRn g^{p_0} \left( \frac{g}{\| gw \|_{\pp}} \right)^{{s} - p_0} H_2 w^{{s}} \,dx \\
  = C\| gw \|_{\pp}^{p_0 - {s}} \int_\subRn g^{{s}} H_2 w^{{s}} \,dx
 \leq C\| gw \|_{\pp}^{p_0 - {s}} \| gw \|_{\pp}^{{s}} \| H_2 \|_{{(\pp/s)'}}
 \leq C \| gw \|_{\pp}^{p_0}.
\end{multline*}
If we combine this with the previous inequalities we get the desired
norm inequality.

To complete the proof we must determine constraints on the parameters
so that $H_1^{s - p_0} H_2 w^s \in A_{p_0}$.  By reverse
factorization and Proposition~\ref{prop:H_j} we need to fix our
parameters so that
 \[  H_1^{s - p_0} H_2 w^s = \left[ H_1^{\frac{p_0 - s}{p_0 - 1}}
   w^{\beta_1} \right]^{1 - p_0} H_2 w^{s - \beta_1 (1 - p_0)} =
\left[ H_1^{\alpha_1}w^{\beta_1} \right]^{1-p_0}
H_2^{\alpha_2}w^{\beta_2}. \]
Equating the exponents we get that
\begin{equation} \label{eqn:final-constraints}
 \alpha_1 = \frac{p_0 - s}{p_0 - 1}, \quad \beta_1 \in \R, \quad
\alpha_2 = 1, \quad \beta_2 = s - \beta_1 (1 - p_0).
\end{equation}
(In other words, there is no constraint on $\beta_1$.)  Since above we
assumed $s<p_0$, we have $\alpha_1>0$ as required in
Proposition~\ref{prop:H_j}.    Above, we required that
$p_->\alpha_1$; combining this with the new constraint we have that
$s > p_0 - p_-(p_0-1)$.  With $\alpha_2=1$, the second restriction
from above, that $(p_+/s)'>\alpha_2$, always holds.

To summarize:  we have shown that given a
constant $s$ such that~\eqref{eqn:s-diagonal} holds, and constants
$\alpha_j,\,\beta_j$ as in~\eqref{eqn:final-constraints}, and if the
maximal operator satisfies~\eqref{eqn:M-constraints}, then the desired
weighted norm inequality holds.   This completes the proof.
\end{proof}

\medskip

\begin{remark} \label{remark:optimal} As we noted above, if  $s=1$,
  $\beta_1=0$ then we get a result analogous to the
  classical extrapolation theorem.  This is enough to motivate our
  choice of these parameters.  But in some sense this choice is also
  optimal.

To see this for $\beta_1$, we will construct
  power weights that satisfy the boundedness conditions on the maximal
  operator in~\eqref{eqn:M-constraints}.   By
  Remark~\ref{remark:power-wt} above, if $\pp\in LH$ and $0\leq a <
  n/p_+$, then $w(x) = |x|^{-a} \in A_\pp$.  Using this, we get
from \eqref{eqn:M-constraints} that $w^{\alpha_1-\beta_1} \in
A_{\pp/\alpha_1}$ and $w^{\beta_2}\in A_{\pp/s}$.  Assume that
$\alpha_1\geq \beta_1$.  Then the weight $|x|^{-a}$ satisfies these
inclusions if
\[ a(\alpha_1-\beta_1)< \frac{\alpha_1 n}{p_+}, \quad
a(s+\beta_1(p_0-1)) < \frac{s n}{p_+}. \]
Clearly, we get the same range for $a$, $a<n/p_+$, in each inequality if
$\beta_1=0$, and if $\beta_1\neq 0$ one of the ranges will be smaller
than this.  Therefore, to maximize the range of exponents we should
take $\beta_1=0$.

When $\beta_1=0$ we then have $w^{\alpha_1} \in A_{\pp/\alpha_1}$ and $w^s
\in A_{\pp/s}$.     If $\alpha_1=s$, then $s=1$ and we get the single
condition $w\in A_\pp$.   If $\alpha_1>s$, then $s<1$ and so
$\alpha_1>1$ and by Proposition~\ref{prop:Ainfty-weaker} we get that
$w^{\alpha_1} \in A_{\pp/\alpha_1}$ implies $w
\in A_{\pp}$.  If $\alpha_1<s$, then $s>1$ and we again get a
condition stronger than $w\in A_\pp$.  So we have that the
choice $s=1$ is in some sense optimal.
\end{remark}

\subsection*{Proof of Theorem \ref{thm:off-weightedvar}}

For the proof we need a few propositions.  The first gives the
relationship between Muckenhoupt $A_p$ weights and $A_{p,q}$ weights.
It was first observed in~\cite{muckenhoupt-wheeden74};
the proof follows immediately from the definition.

\begin{prop}  \label{prop:Apq-Ar}
Given $p,\,q$, $1\leq p<q < \infty$, suppose $w \in A_{p,q}$.  Then
$w^q\in A_r$ when $r=1+q/p'$.
\end{prop}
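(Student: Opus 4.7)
The plan is to show that the $A_r$ quantity for $w^q$ is, up to raising to a power, literally the $A_{p,q}$ quantity for $w$; so the result reduces to a direct computation with exponents. First I would set $u=w^q$ and $r=1+q/p'$ and figure out what $1-r'$ is. Since $r-1=q/p'$, we get $r'=\frac{r}{r-1}=\frac{1+q/p'}{q/p'}=1+\frac{p'}{q}$, hence $1-r'=-\frac{p'}{q}$, and therefore $q(1-r')=-p'$.

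Next I would substitute these exponents into the $A_r$ condition for $u=w^q$:
\[
\left(\frac{1}{|B|}\int_B w^q\,dx\right)\left(\frac{1}{|B|}\int_B w^{q(1-r')}\,dx\right)^{r-1}
=\left(\frac{1}{|B|}\int_B w^q\,dx\right)\left(\frac{1}{|B|}\int_B w^{-p'}\,dx\right)^{q/p'}.
\]
On the other hand, the $A_{p,q}$ quantity raised to the $q$-th power gives
\[
\left[\left(\frac{1}{|B|}\int_B w^q\,dx\right)^{1/q}\left(\frac{1}{|B|}\int_B w^{-p'}\,dx\right)^{1/p'}\right]^{q}
=\left(\frac{1}{|B|}\int_B w^q\,dx\right)\left(\frac{1}{|B|}\int_B w^{-p'}\,dx\right)^{q/p'}.
\]
Taking the supremum over balls $B$ on both sides yields $[w^q]_{A_r}=[w]_{A_{p,q}}^{q}$, which finishes the proof.

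There is no real obstacle here; it is just bookkeeping with conjugate exponents, and in particular the identification $q(1-r')=-p'$ is what makes the two expressions match. The only thing worth double-checking is the range of $r$: since $1\le p<q<\infty$, we have $p'\in (1,\infty]$ (interpreting the $p=1$ case separately, where $p'=\infty$ and $r=1$), so for $p>1$ we get $r=1+q/p'>1$, placing $w^q$ in a genuine $A_r$ class as claimed.
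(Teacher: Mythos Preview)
Your proof is correct and is exactly the approach the paper has in mind: the paper does not give any argument beyond stating that ``the proof follows immediately from the definition,'' and your computation showing $[w^q]_{A_r}=[w]_{A_{p,q}}^q$ via the identity $q(1-r')=-p'$ is precisely that verification. Your handling of the endpoint $p=1$ (where $r=1$ and the $A_{1,q}$ condition in Definition~\ref{defn:pq-weights} is literally the $A_1$ condition for $w^q$) is also fine.
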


The next result is not strictly necessary to our proof, but we include
it as it is the variable exponent version of
Proposition~\ref{prop:Apq-Ar}.

\begin{prop} \label{prop:Apq-Ar-var}
Given $\pp,\, \qq\in \Pp$, $1 < p(x)
  \leq q(x) < \infty$, suppose there exists $\sigma >1$ such that
  $\frac{1}{p(x)} - \frac{1}{q(x)} = \frac{1}{\sigma'}$.  Then $w \in
  A_{\pp, \qq}$ if and only if $w^\sigma \in A_{\qq/\sigma}$.
\end{prop}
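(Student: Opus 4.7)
The plan is to reduce the claimed equivalence to a direct identity of the form $[w^\sigma]_{A_{\qq/\sigma}} = [w]_{A_{\pp,\qq}}^\sigma$, using only the dilation property of the variable Lebesgue norm together with the arithmetic relation between $\pp$, $\qq$ and $\sigma$.

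First I would unpack both sides. By the definition of $A_{\qq/\sigma}$,
\[
[w^\sigma]_{A_{\qq/\sigma}} = \sup_B |B|^{-1}\,\|w^\sigma \chi_B\|_{\qq/\sigma}\,\|w^{-\sigma}\chi_B\|_{(\qq/\sigma)'},
\]
while by Definition~\ref{defn:pq-var-weights} with $\gamma = 1/\sigma'$,
\[
[w]_{A_{\pp,\qq}} = \sup_B |B|^{-1/\sigma}\,\|w\chi_B\|_\qq\,\|w^{-1}\chi_B\|_\cpp.
\]
The first factor transforms immediately by dilation: $\|w^\sigma \chi_B\|_{\qq/\sigma} = \|w\chi_B\|_\qq^\sigma$. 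Similarly, $\|w^{-\sigma}\chi_B\|_{(\qq/\sigma)'} = \|w^{-1}\chi_B\|_{\sigma(\qq/\sigma)'}^{\sigma}$, so the whole argument comes down to identifying the exponent $\sigma(\qq/\sigma)'$ with $\cpp$.

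Next I would verify this pointwise identity. Writing $r = \qq/\sigma$ we have $r' = \qq/(\qq-\sigma)$, hence $\sigma r' = \sigma \qq/(\qq-\sigma)$. Using the hypothesis
\[
\frac{1}{p(x)}-\frac{1}{q(x)} = \frac{1}{\sigma'} = 1 - \frac{1}{\sigma},
\]
one computes $1 - 1/p(x) = 1/\sigma - 1/q(x) = (q(x)-\sigma)/(\sigma q(x))$, so indeed $p'(x) = \sigma q(x)/(q(x)-\sigma) = \sigma r'(x)$ pointwise. (This is where the hypotheses $p(x)\le q(x)$ and $\sigma > 1$ are used to keep all exponents in the right range.)

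Combining the two dilation identities yields, for each ball $B$,
\[
|B|^{-1}\,\|w^\sigma\chi_B\|_{\qq/\sigma}\,\|w^{-\sigma}\chi_B\|_{(\qq/\sigma)'}
= \bigl(|B|^{-1/\sigma}\,\|w\chi_B\|_\qq\,\|w^{-1}\chi_B\|_\cpp\bigr)^\sigma,
\]
so taking the supremum over $B$ gives $[w^\sigma]_{A_{\qq/\sigma}} = [w]_{A_{\pp,\qq}}^\sigma$, and both implications follow at once. There is essentially no obstacle here beyond the exponent bookkeeping; the only subtle point is confirming that the identity $\sigma(\qq/\sigma)' = \cpp$ is a consequence of the diagonal relation $1/\pp - 1/\qq = 1/\sigma'$, which is precisely the variable-exponent analog of the observation used in Proposition~\ref{prop:Apq-Ar}.
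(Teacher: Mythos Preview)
Your proof is correct and follows essentially the same approach as the paper: both arguments reduce to the dilation identity together with the pointwise relation $\sigma(\qq/\sigma)' = \cpp$, which the paper verifies by computing $1/(\sigma(\qq/\sigma)')$ and you verify by computing $1/\cpp$. Your version is slightly sharper in that you record the explicit identity $[w^\sigma]_{A_{\qq/\sigma}} = [w]_{A_{\pp,\qq}}^\sigma$, but the content is the same.
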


\begin{proof} First note that $\sigma\crr = \cpp$. Indeed, taking the reciprocal, we have
\[ \frac{1}{\sigma\crr}
= \frac{1}{\sigma} - \frac{1}{\sigma\rr}
= 1 - \frac{1}{\sigma'} - \frac{1}{\qq}
= 1 - \frac{1}{\pp} + \frac{1}{\qq} - \frac{1}{\qq}
= \frac{1}{\cpp}. \]
The equivalence then follows by dilation and the definition of $A_\rr$
and $A_{\pp,\qq}$:
\begin{multline*} 	|B|^{-1}\| w^\sigma \chi_B \|_{\rr} \|
  w^{-\sigma} \chi_B \|_{\crr} \\
 = 	|B|^{-1}\| w\chi_B \|_{\qq}^\sigma \| w^{-1} \chi_B \|_{\cpp}^\sigma
= \big(|B|^{\frac{1}{\sigma'}-1}\| w\chi_B \|_{\qq} \| w^{-1} \chi_B \|_{\cpp}\big)^\sigma.
\end{multline*}
\end{proof}

\medskip

To state the next result recall that given $\pp\in \Pp$, the modular
is defined by
\[ \rho_\pp(f) = \rho(f) = \int_\subRn |f(x)|^{p(x)}\,dx. \]
In the case of constant exponents, the $L^p$ norm and the modular
differ only by an exponent.  In the variable Lebesgue spaces their
relationship is more subtle as the next result shows.
For a proof see~\cite[Prop.~2.21,
Cor.~2.23]{cruz-fiorenza-book}.

\begin{prop}\label{prop:mod-norm}
Given $\pp\in \Pp$, suppose $p_+ < \infty$.  Then:
 \begin{enumerate}
	\item  $\| f \|_\pp = 1$ if and only if $\rho(f) = 1$;
	\item if $\rho(f) \leq C$, then $\| f \|_{L^{\pp}} \leq \max (C^{1/p_-}, C^{1/p_+})$;
	\item if $\| f \|_{\pp} \leq C$, then $\rho(f) \leq \max( C^{p_+}, C^{p_-})$.
	\end{enumerate}
\end{prop}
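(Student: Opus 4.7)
The plan is to reduce everything to the scaling identity $\rho(f/\lambda) = \int_{\R^n} \lambda^{-p(x)}|f(x)|^{p(x)}\,dx$ combined with the two-sided bounds $\lambda^{-p_+} \leq \lambda^{-p(x)} \leq \lambda^{-p_-}$ when $\lambda \geq 1$, and $\lambda^{-p_-} \leq \lambda^{-p(x)} \leq \lambda^{-p_+}$ when $\lambda < 1$. Since $p_+ < \infty$, we have $\R^n_\infty = \emptyset$ and the norm reduces to the usual Luxemburg form $\|f\|_\pp = \inf\{\lambda>0 : \rho(f/\lambda)\leq 1\}$, making these scaling bounds directly applicable.

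For part (1), first assume $\|f\|_\pp = 1$. The definition of infimum gives $\rho(f/\lambda)\leq 1$ for every $\lambda>1$; letting $\lambda\to 1^+$ and applying monotone convergence (justified because $p_+<\infty$ makes $|f/\lambda|^{p(x)}$ increase pointwise to $|f|^{p(x)}$) yields $\rho(f)\leq 1$. For the reverse inequality, if $\rho(f)<1$, setting $\lambda = \rho(f)^{1/p_+} < 1$ gives $\rho(f/\lambda) \leq \lambda^{-p_+}\rho(f) = 1$, forcing $\|f\|_\pp \leq \lambda < 1$ and contradicting $\|f\|_\pp = 1$; hence $\rho(f)\geq 1$. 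Conversely, if $\rho(f) = 1$ then $\lambda = 1$ is admissible, so $\|f\|_\pp \leq 1$; and for any $\lambda < 1$, $\rho(f/\lambda) \geq \lambda^{-p_-}\rho(f) = \lambda^{-p_-} > 1$ (using $p_- \geq 1$), so no such $\lambda$ is admissible and $\|f\|_\pp \geq 1$.

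For parts (2) and (3), the argument is more mechanical. Given $\rho(f) \leq C$, split on $C\geq 1$ and $C<1$: in the first case, $\lambda_0 = C^{1/p_-} \geq 1$ and the scaling bound give $\rho(f/\lambda_0) \leq \lambda_0^{-p_-}\rho(f) = C^{-1}\rho(f) \leq 1$; in the second, $\lambda_0 = C^{1/p_+} < 1$ and the analogous bound give the same conclusion. In either case $\|f\|_\pp \leq \lambda_0 = \max(C^{1/p_-}, C^{1/p_+})$, proving (2). For (3), from $\|f\|_\pp \leq C$ the same limit argument used in (1) yields $\rho(f/C) \leq 1$; then writing $\rho(f) = \int_{\R^n} C^{p(x)}|f(x)/C|^{p(x)}\,dx$ and bounding $C^{p(x)}$ by $C^{p_+}$ (if $C \geq 1$) or by $C^{p_-}$ (if $C < 1$) yields $\rho(f) \leq \max(C^{p_+}, C^{p_-})$.

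The only delicate step is the boundary equality $\rho(f) = 1$ when $\|f\|_\pp = 1$, since the Luxemburg infimum need not be attained; here $p_+<\infty$ is essential to justify monotone convergence in one direction and $p_-\geq 1$ is essential to force $\lambda^{-p_-}>1$ for $\lambda<1$ in the other. Once part (1) is in hand, parts (2) and (3) reduce to routine case analysis using the same scaling bounds.
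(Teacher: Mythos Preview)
Your proof is correct. The paper does not prove this proposition at all; it simply cites \cite[Prop.~2.21, Cor.~2.23]{cruz-fiorenza-book}, so there is no ``paper's approach'' to compare against. Your argument is the standard one via the scaling bounds on $\lambda^{-p(x)}$ and monotone convergence, and it handles all three parts cleanly. One small remark: in part~(1) you say $p_-\geq 1$ is essential to get $\lambda^{-p_-}>1$ for $\lambda<1$, but in fact $p_->0$ already suffices for that inequality; the hypothesis $\pp\in\Pp$ happens to guarantee $p_-\geq 1$, so the claim is not wrong, just slightly overstated.
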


\medskip

We can now prove Theorem \ref{thm:off-weightedvar}.  As we noted
above, when $\sigma=1$ Theorem~\ref{thm:off-weightedvar} reduces to
Theorem~\ref{thm:diag-weightedvar}, so we will assume $\sigma>1$.  The
proof when $p_0=1$ is more similar to that of Theorem~\ref{thm:A1var},
and so we will defer this case to below after the proof
of~Theorem~\ref{thm:A1var}.  Here we will
assume that $p_0>1$.
We will actually prove the following more general proposition.

\begin{prop}  \label{prop-offdiag}
Let $p_0,\,q_0,\,\sigma$ and exponents $\pp,\,\qq$ be as in the
statement of Theorem \ref{thm:off-weightedvar}.   Fix $\beta_1 \in \R$
and choose any $s$ such that
    \begin{equation}\label{eqn:s-off}
    q_0-q_-\left(\frac{q_0}{\sigma}-1\right)  < s < \min(q_0, q_-).
    \end{equation}
    Let $r_0 = q_0/s$, and define $\alpha_1 = s$ and
    $\beta_2 = s - \beta_1 (1 - r_0)$. Then if $w$ is a weight such
    that $M$ is bounded on $L^{\qq/s}(w^{\alpha_1 - \beta_1})$ and
    $L^{(\qq/s)'} (w^{-\beta_2})$, we have that $\|fw\|_\qq \leq
    C\|gw\|_\pp$.
  \end{prop}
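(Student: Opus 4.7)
The plan is to follow the schema of the proof of Proposition~\ref{prop-diag}, with adaptations to the off-diagonal structure. First, reduce to $(f,g)\in\F$ with $0<\|fw\|_\qq,\|gw\|_\pp<\infty$, and via the approximation argument from Section~\ref{section:applications} assume $g$ is bounded with compact support, so that $g\in L^\qq(w)$ as well. Define
\[
h_1 \;=\; \frac{f}{\|fw\|_\qq} \;+\; \frac{g}{\|gw\|_\pp},
\]
which is in $L^\qq(w)$, and use Proposition~\ref{prop:H_j} to build $H_1$ and $H_2$ with parameters $(\alpha_1,\beta_1)=(s,\beta_1)$ and $(\alpha_2,\beta_2)=(1,\beta_2)$. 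The hypotheses that $M$ is bounded on $L^{\qq/s}(w^{s-\beta_1})$ and on $L^{(\qq/s)'}(w^{-\beta_2})$ yield that $H_1$ is bounded on $L^\qq(w)$ and $H_2$ is bounded on $L^{(\qq/s)'}$, and that $H_1^s w^{\beta_1},\, H_2 w^{\beta_2}\in A_1$. Crucially, $H_1\geq f/\|fw\|_\qq$ and $H_1\geq g/\|gw\|_\pp$ pointwise.

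Starting from $\|fw\|_\qq^s = \|f^s w^s\|_{\qq/s}$ (by dilation, since $s<q_-$), apply duality to obtain $h_2\in L^{(\qq/s)'}$ of unit norm so that $\|f^s w^s\|_{\qq/s}\leq C\int f^s w^s h_2\,dx$. Using $h_2\leq H_2$, insert $H_1^\gamma H_1^{-\gamma}$ with $\gamma=s/r_0'$ and apply H\"older with the constant exponents $r_0=q_0/s$ and $r_0'=q_0/(q_0-s)$ to obtain
\[
\|fw\|_\qq^s \;\leq\; C\, I_1^{s/q_0}\, I_2^{(q_0-s)/q_0},\qquad I_1 = \int_\subRn f^{q_0} H_1^{s-q_0} H_2 w^s\,dx,\; I_2 = \int_\subRn H_1^s H_2 w^s\,dx.
\]
A variable H\"older estimate, together with the norm bounds on $H_1$ and $H_2$, gives $I_2\leq C$. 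The identity
\[
H_1^{s-q_0} H_2 w^s \;=\; (H_2 w^{\beta_2})(H_1^s w^{\beta_1})^{1-r_0}
\]
combined with reverse factorization places this weight in $A_{r_0}$. The constraints on $s$ are tailored so that $r_0 \leq 1+q_0/p_0'$, whence by the inclusion $A_{r_0}\subset A_{1+q_0/p_0'}$ and Proposition~\ref{prop:Apq-Ar} we obtain $w_0 = (H_1^{s-q_0} H_2 w^s)^{1/q_0}\in A_{p_0,q_0}$. The pointwise bound $H_1\geq f/\|fw\|_\qq$ shows $I_1\leq C\|fw\|_\qq^{q_0}<\infty$, so the hypothesis~\eqref{hyp:off-weightedvar} yields $I_1^{1/q_0}\leq C\bigl(\int g^{p_0} w_0^{p_0}\,dx\bigr)^{1/p_0}$.

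The final and most delicate step is the conversion $\int g^{p_0} w_0^{p_0}\,dx \leq C\|gw\|_\pp^{p_0}$. Writing $gw_0 = gw\cdot (H_1 w)^{(s-q_0)/q_0} H_2^{1/q_0}$ and using $H_1\geq g/\|gw\|_\pp$ reduces the task to bounding
\[
\int_\subRn (gw)^{sp_0/q_0} H_2^{p_0/q_0}\,dx \quad\text{by}\quad C\|gw\|_\pp^{sp_0/q_0}.
\]
\textbf{This is the main obstacle.} A naive application of H\"older in variable Lebesgue spaces does not produce an exact balance of exponents---it would only match if $s=q_0$---so the required estimate must instead exploit the off-diagonal relation $1/p(x)-1/q(x)=1/p_0-1/q_0=1/\sigma'$, the $A_1$ structure of $H_2 w^{\beta_2}$, and the modular-versus-norm equivalence in Proposition~\ref{prop:mod-norm} to absorb the slack. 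Once this is established, combining the estimates gives $\|fw\|_\qq^s \leq C\|gw\|_\pp^s$, and taking the $s$-th root yields the conclusion. As in the diagonal case, Fatou's lemma in the variable Lebesgue spaces removes the approximation of $g$ at the end.
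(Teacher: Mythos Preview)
Your proposal has a genuine gap, and you have correctly located it: the ``main obstacle'' in your final step is not an artifact of your estimates but of your choice of $h_1$. With
\[
h_1 = \frac{f}{\|fw\|_\qq} + \frac{g}{\|gw\|_\pp},
\]
the only pointwise bound you obtain for $g$ is $g \leq \|gw\|_\pp\, H_1$, a constant-exponent inequality. After applying the hypothesis you must then bound $\int g^{p_0} H_1^{(s-q_0)p_0/q_0} H_2^{p_0/q_0} w^{sp_0/q_0}\,dx$, and substituting $g\leq \|gw\|_\pp H_1$ leaves $\int H_1^{sp_0/q_0} H_2^{p_0/q_0} w^{sp_0/q_0}\,dx$, an integral in which the natural H\"older split with exponent $\qq/s$ fails because $sp_0/q_0<s$; there is no slack to absorb, and the off-diagonal relation cannot repair this.

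The paper's resolution is to build the off-diagonal structure directly into $h_1$. After normalizing $\|gw\|_\pp=1$, they set
\[
h_1 = \frac{f}{\|fw\|_\qq} + g^{\pp/\qq} w^{\pp/\qq - 1}.
\]
The second term lies in $L^\qq(w)$ by a modular computation (since $(g^{\pp/\qq} w^{\pp/\qq - 1}\cdot w)^\qq=(gw)^\pp$ and Proposition~\ref{prop:mod-norm} applies), and it yields the \emph{variable-exponent} pointwise bound $g\leq H_1^{\qq/\pp} w^{\qq/\pp - 1}$. Substituting this after the hypothesis and applying H\"older with the variable exponent $\alpha(\cdot)=q_0(\qq/s)'/p_0$, one uses the algebraic identity $p_0\bigl(\tfrac{\qq}{\pp}-\tfrac{1}{r_0'}\bigr)\alpha'(\cdot)=\qq$ to match the $H_1$-modular exactly with $\rho_\qq(H_1 w)$, which is uniformly bounded. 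No approximation of $g$ is needed.

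A secondary issue: your claim that the constraints on $s$ force $r_0=q_0/s\leq 1+q_0/p_0'=q_0/\sigma$ is false in general (take $p_0=2$, $q_0=4$, $q_-=2$: then $\sigma=4/3$ and the admissible range is $0<s<2$, so $s=1$ gives $r_0=4>3=r_1$). The paper does not use the inclusion $A_{r_0}\subset A_{r_1}$; it factors the weight directly for $A_{r_1}$, which fixes $\alpha_1=\frac{q_0-s}{q_0/\sigma-1}$ rather than $\alpha_1=s$ (the two agree only when $s=\sigma$, the value used to deduce Theorem~\ref{thm:off-weightedvar}).
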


  To prove Theorem \ref{thm:off-weightedvar}, we
 take $\beta_1 = 0$ and $s = \sigma$.  Since
\[ 1 - \frac{1}{\sigma}
  = \frac{1}{p_0} - \frac{1}{q_0} = \frac{1}{p_-}-\frac{1}{q_-}, \]
we have that  the second inequality in \eqref{eqn:s-off} holds. The
first inequality is equivalent to $\sigma^2-(q_0+q_-)\sigma
+q_-q_0>0$, which follows from the second inequality.  The requirement on the weight $w$ reduces to
  $M$ being bounded on $L^{\qq/\sigma}(w^\sigma)$ and
  $L^{(\qq/\sigma)'}(w^{-\sigma})$, or equivalently, $(\qq/\sigma,
  w^{\sigma})$ is an $M$-pair.

\begin{proof}
  The proof follows an outline similar to that of
  Theorem~\ref{thm:diag-weightedvar}; we will concentrate on details
  that are different.  Fix a pair $(f,g)\in \F$; as before we may
  assume without loss of generality that $0< \|f\|_{\Lq(w)},\,
  \|g\|_{\Lp(w)}<\infty$.  Moreover, if $(f,g)$
  satisfies~\eqref{result:off-weightedvar}, then so does $(\lambda f,
  \lambda g)$ for any $\lambda>0$, so without loss of generality we
  may assume that $\|g\|_{\Lp(w)}=1$.  Then by
  Proposition~\ref{prop:mod-norm} it will suffice to prove that $\| fw
  \|_{\qq} \leq C$.

Define
	\[ h_1 = \frac{f}{\| fw \|_{\qq}} + g^{\frac{\pp}{\qq}} w^{\frac{\pp}{\qq} - 1}; \]
we claim that $\| h_1 w \|_{\qq} \leq C$.    This follows from
Proposition~\ref{prop:mod-norm}:
\[ \rho_\qq(h_1w) \leq 2^{q_+} \int_\subRn
\left(\frac{f(x)w(x)}{\|fw\|_{\qq}}\right)^{q(x)}\,dx
+ 2^{q_+} \int_\subRn \big( g(x) w(x) )^{p(x)}\,dx \leq 2^{q_++1}. \]
We again use Proposition~\ref{prop:H_j} to define two operators $H_1$ and
$H_2$ as in~\eqref{eqn:H1-H2}.  Let $r_0 = q_0/s$, and fix $s$, {$0
  < s < \min(q_0, q_-)$}. Then there exists $h_2 \in L^{(\qq/s)'}$,
$\| h_2 \|_{(\qq/s)'} = 1$, such that for any $\gamma>0$,
\begin{multline*}
 \| fw \|_{\qq}^s
 \leq C\int_\subRn f^s w^s h_2  \,dx
\leq C\int_\subRn f^s H_1^{\gamma} H_1^{-\gamma} H_2 w^s  \,dx \\
 \leq C\left( \int_\subRn f^{q_0} H_1^{-\gamma (q_0/s)} H_2 w^s \,dx
 \right)^{1/r_0}
\left( \int_\subRn H_1^{\gamma r_0'} w^s H_2 \,dx \right)^{1/r_0'}
= I_1^{1/r_0} \cdot I_2^{1/r_0'}.
\end{multline*}

{We start by finding conditions to insure that $I_2$ is uniformly
  bounded. Since $h_1\in L^{\qq}(w)$ and $h_2\in L^{(\qq/s)'}$,  we
  require $H_1$ and $H_2$ to be bounded on these
  spaces.  We apply H\"older's inequality with exponent $\qq/s$ to get
    \[ I_2 \leq C \| H_1^{\gamma(q_0/s)'} w^s \|_{\qq/s} \| H_2
    \|_{(\qq/s)'}. \]
If we let $\gamma = \frac{s}{(q_0/s)'}$, then by dilation,
\[ \| H_1^{\gamma r_0'} w^s \|_{\qq/s} = \| H_1 w \|_{\qq}^s \leq 2^s
\| h_1 w \|_{\qq}^s \leq C, \qquad \| H_2 \|_{(\qq/s)'} \leq 2\| h_1
\|_{(\qq/s)'} = 2. \] }
For $H_1$ and $H_2$ to be bounded on these
spaces, by Proposition~\ref{prop:H_j} we must have that the maximal
operator satisfies
\begin{equation*} \label{eqn:max-constraint-off}
M \text{ bounded on } L^{\qq/\alpha_1} (w^{\alpha_1 - \beta_1}) \text{
  and }
L^{(\qq/s)'/\alpha_2} (w^{-\beta_2}).
\end{equation*}
For these to hold we must have that
\begin{equation} \label{eqn:sub-constraint}
q_->\alpha_1 \qquad \text{ and } \qquad (q_+/s)'>\alpha_2.
\end{equation}

It remains to estimate $I_1$; with our value of $\gamma$ we now have
that
    \[ I_1 = \int_{\R^n} f^{q_0} H_1^{-q_0/r_0'} H_2 w^s dx. \]
In order to apply~\eqref{hyp:off-weightedvar} we need to show that
$I_1$ is finite.  However, this follows from H\"older's
inequality and the above estimates for $H_1$ and $H_2$:
 \begin{multline*}
   I_1 \leq \| f \|_{L^{\qq}(w)}^{q_0} \int H_1^{q_0} H_1^{-q_0/r_0'}
   H_2 w^{{s}} \, dx \\
   = \| f \|_{L^{\qq}(w)}^{q_0} \int H_1^{{s}} H_2 w^{{s}}\, dx \leq
   \| f \|_{L^{\qq}(w)}^{q_0} \| H_1^{{s}} w^{{s}} \|_{\qq/{{s}}} \| H_2
   \|_{(\qq/{s})'} < \infty.
 \end{multline*}

 To apply our hypothesis \eqref{hyp:off-weightedvar} we need the
 weight $w_0 = (H_1^{-\gamma(q_0/s)} H_2 w^s )^{1/q_0}$ to be in $
 A_{p_0,q_0}$, or equivalently by Proposition~\ref{prop:Apq-Ar},
 $w^{q_0} = H_1^{-(q_0 - s)} H_2 w^s \in A_{r_1}$, where
\[ r_1 = 1+ \frac{q_0}{p_0'} = \frac{q_0}{\sigma}. \]
 To apply reverse factorization
 we write
\[ w^{q_0} = \left( H_1^{\frac{q_0 - s}{r_1 - 1}} w^{\beta_1} \right)^{1 - r_1} H_2 w^{s - \beta_1 (1 - r_1)}. \]
By Proposition~\ref{prop:H_j} this gives the following constraints on
$\alpha_j, \beta_j$:
\[ \alpha_1 = \frac{q_0 - s}{\frac{q_0}{\sigma} - 1} , \quad \beta_1 \in \R,
\quad \alpha_2 = 1, \quad \beta_2 = s - \beta_1 (1 - q_0/\sigma) \]
If we combine these with the constraints in~\eqref{eqn:sub-constraint}
we see that the second one there always holds and the first one holds
if
\[ s > q_0 -q_-\left(\frac{q_0}{\sigma}-1\right). \]

We can now apply \eqref{hyp:off-weightedvar}: by the definition of $h_1$ and by H\"{o}lder's inequality with respect to the undetermined exponent $\alpha(\cdot)$, we get

\begin{align*}
 I_1^{1/q_0} & \leq C\left( \int_\subRn g^{p_0} \big[ H_1^{-q_0/r_0'} w^{{s}} H_2 \big]^{p_0/q_0} \,dx  \right)^{1/p_0} \\ &\leq C\left( \int_\subRn \left( h_1^{\frac{\qq}{\pp}} w^{\frac{\qq}{\pp} - 1} \right)^{p_0} H_1^{-p_0/r_0'} H_2^{p_0/q_0} w^{{s} p_0/q_0} \,dx\right)^{1/p_0} \\
 &\leq C\bigg(\int_\subRn H_1^{p_0 (\frac{\qq}{\pp} - \frac{1}{r_0'})} H_2^{p_0/q_0} w^{p_0 (\frac{{s}}{q_0} + \frac{\qq}{\pp} - 1)} \,dx\bigg)^{1/p_0} \\
&\leq C\big\| H_1^{p_0 (\frac{\qq}{\pp} - \frac{1}{r_0'})} w^{p_0
   (\frac{\qq}{\pp} - \frac{1}{r_0'})}  \big\|_{\alpha'(\cdot)}^{1/p_0}
\| H_2^{p_0/q_0} \|_{\alpha(\cdot)}^{1/p_0} \\
& = CJ_1^{1/p_0} J_2^{1/p_0}.
 \end{align*}
If we let $\alpha(\cdot) = \frac{q_0 (\qq/s)'}{p_0}$, then by dilation
$J_2$ is uniformly bounded.
To show that $J_1$  is uniformly bounded we first note that
\[ p_0 \left( \frac{\qq}{\pp} - \frac{1}{r_0'} \right) \alpha'(\cdot)
= \qq. \]
(This is given without proof in the constant case
in~\cite[Section~3.5]{cruz-martell-perezBook}.  It follows by a
tedious but straightforward computation.   Though $r_0$ depends on $s$, the argument
only uses the fact that $\frac{1}{\pp} - \frac{1}{\qq} = \frac{1}{p_0} - \frac{1}{q_0}$, and does not depend on the value of $s$.)  Given this, then
\[ \rho_{\alpha'(\cdot)} \big(H_1^{p_0 (\frac{\qq}{\pp} - \frac{1}{r_0'})}
w^{p_0 (\frac{\qq}{\pp} - \frac{1}{r_0'})}\big)
= \int_\subRn H_1^{\qq} w^{\qq} \,dx
= \rho_{\qq} (H_1 w). \]
If we apply Proposition~\ref{prop:mod-norm} twice, since
$\|H_1\|_{\Lq(w)}\leq 2 \|H_1\|_{\Lq(w)}$ is uniformly bounded, $\rho_{\qq} (H_1 w)$ is as
well, and hence, $J_1$ is uniformly bounded.  This completes the proof.
\end{proof}

\subsection*{Proof of Theorem \ref{thm:limited-var}}

For the proof we will need a lemma due to Johnson and
Neugebauer~\cite{johnson-neugebauer91}.

\begin{lemma} \label{lemma:jn}
Given a weight $w$,  then $w\in A_p\cap RH_s$, $1<p,\,s<\infty$,  if and only if $w^s \in
A_{{\tau}}$, where ${\tau}=s(p-1)+1$.
\end{lemma}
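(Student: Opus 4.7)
The plan is to verify the equivalence by direct computation, after first making a key algebraic reduction. With $\tau = s(p-1) + 1$, one computes that $\tau - 1 = s(p-1)$ and $s(1-\tau') = -s/(\tau-1) = -1/(p-1) = 1-p'$. Therefore the $A_\tau$ condition on $w^s$ can be rewritten as
\[
\sup_B \left(\avgint_B w^s\,dx\right)\left(\avgint_B w^{1-p'}\,dx\right)^{s(p-1)} < \infty.
\]
This is the central observation: the ``dual'' exponent appearing in $A_\tau(w^s)$ agrees with the one appearing in $A_p(w)$, and the outer power is exactly $s$ times the one in $A_p$. All the rest is a matter of splitting or recombining this quantity, and I expect no real obstacle.

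For the forward direction, assume $w \in A_p \cap RH_s$. I would factor the $A_\tau$ quantity above as
\[
\left(\avgint_B w^s\,dx\right)\left(\avgint_B w^{1-p'}\,dx\right)^{s(p-1)}
= \left(\avgint_B w^s\,dx\right)\left(\avgint_B w\,dx\right)^{-s}
\cdot \left[\left(\avgint_B w\,dx\right)\left(\avgint_B w^{1-p'}\,dx\right)^{p-1}\right]^{s},
\]
and then bound the first factor using $[w]_{RH_s}^s$ and the second by $[w]_{A_p}^s$.

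For the reverse direction, assume $w^s \in A_\tau$. To extract $A_p$, I would apply Jensen's inequality $\avgint_B w\,dx \leq (\avgint_B w^s\,dx)^{1/s}$ (equivalently $(\avgint_B w)^s \leq \avgint_B w^s$), substitute into the $A_\tau$ inequality, and take an $s$-th root to recover the $A_p$ condition with constant $[w^s]_{A_\tau}^{1/s}$. To extract $RH_s$, I would instead use Jensen applied to the convex function $t \mapsto t^{1-p'}$ (which is convex since $1-p' < 0$), giving $\avgint_B w^{1-p'}\,dx \geq (\avgint_B w\,dx)^{1-p'}$, hence $(\avgint_B w^{1-p'})^{s(p-1)} \geq (\avgint_B w)^{-s}$. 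Inserting this lower bound into the $A_\tau$ inequality yields $\avgint_B w^s\,dx \leq [w^s]_{A_\tau} (\avgint_B w\,dx)^s$, which is the $RH_s$ condition.

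The proof is essentially two lines of algebra plus two invocations of Jensen's inequality, so the only ``hard'' step is noticing the algebraic identification $s(1-\tau') = 1-p'$; once that is in hand, the structure of the proof is forced.
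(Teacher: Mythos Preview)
Your proof is correct. The algebraic identity $s(1-\tau')=1-p'$ is exactly the right observation, the factorization in the forward direction is valid, and both Jensen arguments in the reverse direction are applied correctly (convexity of $t\mapsto t^s$ for $s>1$ and of $t\mapsto t^{1-p'}$ for $1-p'<0$ on $(0,\infty)$).

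There is nothing to compare against: the paper does not prove this lemma but simply attributes it to Johnson and Neugebauer~\cite{johnson-neugebauer91} and uses it as a black box. Your argument supplies an elementary self-contained proof where the paper only gives a citation; the content matches the standard proof of this fact.
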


We again prove a more general result.

\begin{prop} \label{prop-limited}
Given that the hypotheses of Theorem~\ref{thm:limited-var} hold,
suppose $\pp \in LH$ with $q_- < p_- \leq p_+ < q_+$.    Then there
exists $p_*$, $q_-<p_*<q_+$ and $s > 0$ such that
\begin{equation}\label{eqn:s-limited}
 \max\left( p_- - p_* \left( \frac{p_-}{q_-} - 1 \right),
\frac{p_* p_+}{q_+} \right) < s < \min(p_-, p_*).
\end{equation}
Define
    \[ \tau_0 = \left(\frac{q_+}{p_*}\right)' \left(\frac{p_*}{q_-} - 1\right).  \]
Let $\beta_1 \in \R$ be any constant and define
\[
\alpha_1 = q_- \left( \frac{p_* - s}{p_* - q_-} \right), \quad
\alpha_2 = \left(\frac{q_+}{p_*}\right)', \quad
\beta_2 = s\left( \frac{q_+}{p_*} \right)' - \beta_1 (1 - \tau_0). \]
Then for any weight $w$ such that
    \begin{equation}\label{pair:limited}
    w^{\alpha_1 - \beta_1} \in A_{\pp/\alpha_1}
\qquad \text{and} \qquad
w^{-\beta_2} \in A_{(\pp/s)'/\alpha_2},
    \end{equation}
we have that
    \[ \| f \|_{L^{\pp}(w)} \leq C \| g \|_{L^{\pp}(w)}, \qquad (f, g) \in \mathcal{F}. \]
  \end{prop}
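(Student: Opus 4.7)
The proof plan follows the five-tool template (dilation, duality, H\"older, reverse factorization, Rubio de Francia algorithm) described in Section~\ref{section:general}, adapted to the limited range setting. Fix $(f,g)\in\F$ with $0<\|f\|_{\Lp(w)},\,\|g\|_{\Lp(w)}<\infty$ (the degenerate cases are handled by standard approximation arguments). Set
\[
h_1 = \frac{f}{\|f\|_{\Lp(w)}} + \frac{g}{\|g\|_{\Lp(w)}},
\]
so $\|h_1\|_{\Lp(w)}\leq 2$. Pick $s$ satisfying \eqref{eqn:s-limited} and apply dilation followed by duality in $L^{\pp/s}$: there exists $h_2\in L^{(\pp/s)'}$ with $\|h_2\|_{(\pp/s)'}=1$ such that $\|fw\|_\pp^s \leq C\int_{\R^n} f^s w^s h_2\,dx$. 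Use Proposition~\ref{prop:H_j} to define
\[
H_1 = \Rj_1(h_1^{\alpha_1}w^{\beta_1})^{1/\alpha_1}w^{-\beta_1/\alpha_1},\qquad H_2 = \Rj_2(h_2^{\alpha_2}w^{\beta_2})^{1/\alpha_2}w^{-\beta_2/\alpha_2},
\]
leaving $\alpha_j,\,\beta_j$ undetermined. Inserting $H_1^\gamma H_1^{-\gamma}H_2\geq h_2$ into the duality estimate and applying H\"older with exponents $p_*/s$ and $(p_*/s)'$ gives
\[
\|fw\|_\pp^s \leq C\, I_1^{s/p_*}\, I_2^{1/(p_*/s)'},
\]
where $I_1=\int f^{p_*} H_1^{-\gamma(p_*/s)} H_2 w^s\,dx$ and $I_2=\int H_1^{\gamma(p_*/s)'}H_2 w^s\,dx$.

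I would first dispatch $I_2$ by choosing $\gamma=s/(p_*/s)'$, applying H\"older in $\pp/s$, and invoking Proposition~\ref{prop:H_j}: $I_2\leq C\|H_1 w\|_\pp^s\|H_2\|_{(\pp/s)'}$, which is uniformly bounded as soon as $M$ is bounded on $L^{\pp/\alpha_1}(w^{\alpha_1-\beta_1})$ and on $L^{(\pp/s)'/\alpha_2}(w^{-\beta_2})$, i.e., exactly \eqref{pair:limited}. The necessary conditions $\alpha_1<p_-$ and $\alpha_2<(p_+/s)'$ for these to be meaningful will translate into the lower bounds in \eqref{eqn:s-limited}. For $I_1$, the same choice of $\gamma$ produces the exponent $s-p_*<0$ on $H_1$; using $H_1\geq h_1\geq f/\|f\|_{\Lp(w)}$ and one more H\"older shows $I_1<\infty$, allowing me to apply the hypothesis~\eqref{hyp:limited-var} with $p_0=p_*$ provided
\[
w_0 := H_1^{s-p_*}H_2\,w^s \in A_{p_*/q_-}\cap RH_{(q_+/p_*)'}.
\]
Granting this membership, the hypothesis together with a symmetric chain of estimates that uses $g$ in place of $f$ yields $I_1\leq C\|g\|_{\Lp(w)}^{p_*}$ times a uniform constant, which combines with the $I_2$ bound to close the argument.

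The hard part is verifying the membership $w_0\in A_{p_*/q_-}\cap RH_{(q_+/p_*)'}$: the usual reverse factorization does not produce such a weight directly from the two $A_1$ pieces $H_1^{\alpha_1}w^{\beta_1}$ and $H_2^{\alpha_2}w^{\beta_2}$. The fix is Lemma~\ref{lemma:jn}, which collapses the intersection into a single Muckenhoupt class: it is equivalent to require $w_0^{(q_+/p_*)'}\in A_{1+\tau_0}$ with $\tau_0=(q_+/p_*)'(p_*/q_- - 1)$. Writing $w_0^{(q_+/p_*)'}=[H_1^{\alpha_1}w^{\beta_1}]^{-\tau_0}H_2^{\alpha_2}w^{\beta_2}$ and matching the exponents of $H_1$, $H_2$, and $w$ forces the values of $\alpha_1,\alpha_2,\beta_2$ stated in the proposition, with $\beta_1$ remaining free; imposing $\alpha_1<p_-$ and $\alpha_2<(p_+/s)'$ then carves out the range \eqref{eqn:s-limited}. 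Unlike Theorem~\ref{thm:diag-weightedvar}, where $s=1$ and $\beta_1=0$ trivially recover the classical analogue, here no such canonical choice exists: the system of constraints is tight, and the existence of admissible $(p_*,s)$ relies precisely on $q_-<p_-\leq p_+<q_+$ together with $\pp\in LH$. Choosing $p_*$ close enough to $q_-$ makes \eqref{eqn:s-limited} nonempty, after which log-H\"older regularity supplies the required bounds on $M$.
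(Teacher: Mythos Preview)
Your proposal follows the paper's approach almost exactly: the setup with $h_1$, the dilation/duality/H\"older splitting into $I_1$ and $I_2$, the choice $\gamma=s/(p_*/s)'$, the bound on $I_1$ via $H_1\geq f/\|f\|_{\Lp(w)}$, and the use of Lemma~\ref{lemma:jn} to convert the condition $w_0\in A_{p_*/q_-}\cap RH_{(q_+/p_*)'}$ into a single Muckenhoupt condition before matching exponents are all exactly what the paper does. There is, however, one genuine gap.

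You write that you will ``apply the hypothesis~\eqref{hyp:limited-var} with $p_0=p_*$,'' and later that ``choosing $p_*$ close enough to $q_-$ makes \eqref{eqn:s-limited} nonempty.'' But \eqref{hyp:limited-var} is only assumed for the \emph{single given} $p_0$, not for an arbitrary $p_*\in(q_-,q_+)$. If you run the argument with that $p_0$ in place of $p_*$, the constraints become
\[
\max\!\left(p_- - p_0\Big(\tfrac{p_-}{q_-}-1\Big),\ \tfrac{p_0 p_+}{q_+}\right) < s < \min(p_-,p_0),
\]
and this interval can be empty: for instance, if $p_-<p_+$ and $p_0$ lies in $\big(\tfrac{p_-q_+}{p_+},\,q_+\big)$, then $\tfrac{p_0p_+}{q_+}>p_-$ and no admissible $s$ exists. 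The paper closes this by first invoking the \emph{constant-exponent} limited range extrapolation theorem (Theorem~\ref{thm:limited-const}) to transfer the hypothesis from $p_0$ to every $p_*\in(q_-,q_+)$ with weights in $A_{p_*/q_-}\cap RH_{(q_+/p_*)'}$; only after that step is one free to pick $p_*$ close to $q_-$ so that \eqref{eqn:s-limited} has content. You need to insert this appeal to Theorem~\ref{thm:limited-const} explicitly; without it the argument does not close.
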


\begin{remark}
It will follow from the proof that the values of $p_*$ and $s$ are
not unique.  We will also see that the $A_\pp$ conditions
in~\eqref{pair:limited} are well defined.
\end{remark}

\medskip

  To prove Theorem \ref{thm:limited-var}, note first that if we take
  $w = 1$, then \eqref{pair:limited} holds since $\pp \in LH$ and $p_->1$ implies $\pp$ has the $K_0$ condition (see Corollay 4.50 in \cite{dcu-af-crm}), and so we get the unweighted inequality \eqref{result:limited-var1}.

To prove the weighted norm inequality \eqref{result:limited-var2}, let
$p_*$ and $s$  be any values satisfying \eqref{eqn:s-limited}.  We want $\beta_2 =
0$ so that the second condition in \eqref{pair:limited} always
holds.  This is the case if we let
    \[ \beta_1 = \frac{s(q_+/p_*)'}{1 - \tau_0} = \frac{sq_-}{q_- - p_*}
    = -\frac{s\sigma}{p_*} < 0, \]
    where $\sigma = \frac{p_* q_-}{p_* - q_-}$. Then $\alpha_1 -
    \beta_1 = \sigma$, and if we let $c  = 1 - \frac{s}{p_*}$,
    the first condition in~\eqref{pair:limited} reduces to
    $w^{\sigma} \in A_{\frac{\pp}{c\sigma}}$.

\medskip

\begin{proof}
Fix an exponent $\pp\in LH$, $q_-<p_-\leq p_+<q_+$, and fix a pair
$(f, g)\in \F$.
As before, without loss of generality we may assume that
$0<\|f\|_{L^{\pp}{(w)}},\,
\|g\|_{L^{\pp}{(w)}} < \infty$.  Define $h_1 \in L^{\pp}{(w)}$,
$\|h_1\|_{L^{\pp}{(w)}} \leq 2$,  by
    \[ h_1 = \frac{f}{\| f \|_{L^{\pp}{(w)}}} + \frac{g}{\| g \|_{L^{\pp}{(w)}}}. \]
We will use Proposition~\ref{prop:H_j} to define two operators $H_1$
and $H_2$ as in~\eqref{eqn:H1-H2}.
By dilation and duality, there exists $h_2 \in L^{(\pp/s)'}$, $\|h_2\|_
{(\pp/s)'}=1$, such that
\begin{multline*}
 \| f {w} \|_{\pp}^s
 = \| f^s {w^s} \|_{\pp/s}  \leq C\int_\subRn f(x)^s h_2 (x) {w^s} \,dx
\leq C\int_\subRn f(x)^s H_1^{-\gamma} H_1^{\gamma} H_2 {w^s} \,dx \\
\leq C\left( \int_\subRn f^{p_0} H_1^{-\gamma r_0} H_2 {w^s}
                  \,dx \right)^{1/r_0}
\left( \int_\subRn H_1^{\gamma r_0'} H_2 {w^s} \,dx \right)^{1/r_0'}
= CI_1^{1/r_0} I_2^{1/r_0'},
\end{multline*}
where $r_0=p_0/s$.

We first show that  $I_2$ is uniformly bounded.   As in the proof
of Theorem~\ref{thm:diag-weightedvar}, we want
$H_1$ to be bounded on $L^{\pp}(w)$ and $H_2$ to be bounded on
$L^{(\pp/s)'}$.   Then by H\"older's inequality and dilation,
\begin{multline*}
 I_2 \leq C\|H_1^{\gamma r_0'} w^s\|_{\pp/s}\|H_2\|_{(\pp/s)'} \\
\leq C \|H_1 w\|_{\gamma r_0' \pp/s}^{\gamma r_0'} \|H_2\|_{(\pp/s)'}
\leq C\|h_1w\|_{\gamma r_0' \pp/s}^{\gamma r_0'} \|h_2\|_{(\pp/s)'}.
\end{multline*}
The last term will be uniformly bounded if we let $\gamma= s/r_0'$.
For $H_1$ and $H_2$ to be bounded on these spaces, by
Proposition~\ref{prop:H_j} we must have that
\[ M \text{ bounded on } L^{\pp/\alpha_1}(w^{\alpha_1-\beta_1})
\text{ and } L^{(\pp/s)'/\alpha_2}(w^{-\beta_2}). \]
Since $\pp\in LH$, this will be the case if
\begin{equation} \label{eqn:var-constraint}
p_->\alpha_1, \qquad (p_+/s)'>\alpha_2.
\end{equation}

\medskip

To bound $I_1$, we want to apply our
hypothesis~\eqref{hyp:limited-var}; to
do so we need to show that it is finite.  But by our assumptions on
$H_1$ and $H_2$ and the definition of $h_1$, we
have that
\begin{multline*}
 I_1
= \int_\subRn f^{p_0} H_1^{-(p_0 - s)} H_2 {w^s} \,dx
\leq \int_\subRn (\| f {w} \|_{\pp} H_1)^{p_0} H_1^{-(p_0 - s)} H_2 {w^s} \,dx \\
= \| f {w} \|_{\pp}^{p_0} \int_\subRn H_1^s H_2 {w^s} \,dx
\leq C\| f {w} \|_{\pp}^{p_0} \| H_1 {w} \|_{\pp}^s \| H_2 \|_{(\pp/s)'} < \infty.
\end{multline*}
Assume for the moment that $w_0 = H_1^{-(p_0 - s)} H_2 {w^s}
\in A_{p_0/q_-} \cap RH_{(q_+/p_0)'}$. Then
by~\eqref{hyp:limited-var} and arguing as we did in
the previous inequality, we get that
\begin{multline*}
    \int_\subRn f^{p_0} H_1^{-(p_0 - s)} H_2 {w^s} \,dx \\
\leq C\int_\subRn g^{p_0} H_1^{-(p_0 - s)} H_2 {w^s} \,dx
\leq C\| g {w} \|_{\pp}^{p_0} \int_\subRn H_1^{p_0} H_1^{-(p_0 - s)} H_2 {w^s}
\,dx
\leq C \| g {w} \|_{\pp}^{p_0}.
\end{multline*}
If we combine this with the previous estimates we get the desired
weighted norm inequality.

\medskip

We can complete the proof if our various assumptions hold.  However,
as we will see, this may not be possible with our given value of
$p_0$, and so we will introduce a new parameter $p_*$.  We first
consider the weight $w_0$.  We want $w_0 = H_1^{-(p_0 - s)} H_2
{w^s}$ to be in $A_{p_0/q_-} \cap RH_{(q_+/p_0)'}$, which by
Lemma~\ref{lemma:jn} is equivalent to $w_0^{(q_+/p_0)'} \in
A_{{\tau_0}}$, where ${\tau_{0}} = \left( \frac{q_+}{p_0}
\right)' \left( \frac{p_0}{q_-} - 1 \right) + 1$. To apply reverse
factorization, we rewrite $w_0$ as
\[
	w_0^{(q_+/p_0)'}
		= \left[ H_1^{-(p_0 - s)} H_2 {w^s} \right]^{(q_+/p_0)'}
 = \left[ H_1^{(q_-) \frac{p_0 - s}{p_0 - q_-}} {w^{\beta_1}}  \right]^{1 - {\tau_0}} H_2^{(q_+/p_0)'} {w^{s(q_+ /p_0)' - \beta_1 (1 - {\tau_{0}})}}.
\]
Therefore, by Proposition~\ref{prop:H_j} we must have that
\[  \alpha_1 = q_-\left(\frac{p_0 - s}{p_0 - q_-}\right), \quad  \beta_1 \in \R,
\quad \alpha_2 = \left(\frac{q_+}{p_0}\right)',
\quad {\beta_2 = s\left(\frac{q_+}{p_0}\right)' - \beta_1 (1 - {\tau_{0}})}. \]
If we combine this with the first constraint
in~\eqref{eqn:var-constraint} we see that we need
\[  \frac{p_-}{q_-} \left( \frac{p_0 - q_-}{p_0
    - s} \right) > 1; \]
equivalently, we must have that
\[ 	s > p_- - p_0 \left( \frac{p_-}{q_-} - 1 \right) > 0. \]
Similarly, the second constraint in~\eqref{eqn:var-constraint} implies
that we also need
\[ 	s > \frac{p_0 p_+}{q_+}. \]

However, it need not be the case that we can find such an $s$ that
also satisfies $s<\min(p_-,p_0)$.    We can overcome this problem by
changing the value $p_0$.   By limited range extrapolation in the
constant exponent case, Theorem~\ref{thm:limited-const}, we
have that our hypothesis~\eqref{hyp:limited-var} holds with $p_0$
replaced by any $p_*$, $q_-<p_*<q_+$ provided that $w_0\in A_{p_*/q_-}
\cap RH_{(q_+/p_*)'}$.

We can, therefore, repeat the entire argument above with $p_0$ replaced
by $p_*$ and we will get our desired conclusion if we can find $p_*$
and $s>0$ such that~\eqref{eqn:s-limited} holds.   (The constants
$\alpha_j,\,\beta_j,\,\tau_0$ are also redefined as in the statement of
Proposition~\ref{prop-limited}.)  This is equivalent
to the following four inequalities being true:
\begin{align*}
(1)  & \;  p_* > \frac{p_* p_+}{q_+}, \qquad
& (3) &\;  p_- >  p_- - p_* \left( \frac{p_-}{q_-} - 1 \right), \\
(2) &\;  p_* > p_- - p_* \left( \frac{p_-}{q_-} - 1 \right), \qquad
& (4) & \; p_- > \frac{p_* p_+}{q_+}.
\end{align*}
Inequalities (1) and (3) always hold. Inequality (2) is equivalent
to $p_- \left( \frac{p_*}{q_-} \right) > p_-$ which is always true.
Inequality (4) holds if $p_*$ is such that
\[ 	 q_- < p_* < \frac{q_+}{p_+} p_-<q_+; \]
such a $p_*$ exists  since $\frac{p_+}{p_-} < \frac{q_+}{q_-}$.
Therefore, we can find the desired value of $p_*$ and $s$ and this
completes the proof of Proposition~\ref{prop-limited}.
\end{proof}

\begin{remark}\label{remark:lim-reduction}

  The limited-range extrapolation theorem with constant exponents does
  not follow from Theorem~\ref{thm:limited-var}.  However, it does
  follow from Proposition~\ref{prop-limited} by choosing a different
  set of parameters.   We need to prove that if let
  $\pp = p$, $q_-<p<q_+$,
then the norm inequality $\| fw \|_{p} \leq C \| gw
  \|_{p}$ holds provided that the weight $w^p \in A_{p/q_-}\cap
  RH_{(q_+/p)'}$, which by Lemma~\ref{lemma:jn} is equivalent to
 $w^{p(q_+/p)'} \in
  A_{\tau_p}$, where $\tau_p = (\frac{q_+}{p})'
  (\frac{p}{q_-} - 1) + 1$.   Restating this condition in terms of our variable
  weight condition, we need that the norm inequality holds provided
 $w$ satisfies
\begin{equation} \label{eqn:const-req}
w^{p(q_+/p)'/\tau_p}
  \in A_{\tau_p}^{var}.
\end{equation}
(See the  comments just before Proposition~\ref{prop:Ainfty-weaker}
for this  notation.) For the two
  conditions in~\eqref{pair:limited} to reduce to this one
  requirement, we must have that:
    \begin{enumerate}
    \item The first condition must be the same as
      \eqref{eqn:const-req}.  This is the case if $\alpha_1 - \beta_1
      = p(q_+/p)'/\tau_p$, and $p/\alpha_1 = \tau_p$, or $\alpha_1
= p/\tau_p$ and $\beta_1 = \frac{p}{\tau_p}
      \left( 1 - (q_+/p)' \right)$. Therefore, $s$ and $\beta_2$
      must satisfy
    \begin{align*}
        s = \frac{p}{\tau_p} \left( 1 - \frac{p_0}{q_-} \right) + p_0, \qquad \beta_2 = s(q_+/p_0)' - \beta_1 (1 - \tau_{p_0}).
    \end{align*}
    \item The second condition must be the `dual' of
      \eqref{eqn:const-req}: i.e., $w^{-p(q_+/p)'/\tau_p} \in
      A_{\tau_p'}^{var}$. Thus we must have that
\[ \frac{(p/s)'}{\alpha_2} = \tau_p', \qquad \beta_2 =
\frac{p}{\tau_p} (q_+/p)'. \]
    \end{enumerate}
A lengthy but straightforward computation shows that these two pairs of
    values for $s$ and $\beta_2$ are exactly the same.

Finally, we also need to show that
$s$ satisfies \eqref{eqn:s-limited}: that is, with $p_- = p = p_+$, if
we have
    \[ \max\left( p - p_0 \left( \frac{p}{q_-} - 1 \right),
\frac{p_0 p}{q_+} \right) < s < \min(p, p_0). \]
This actually follows from the above computations.  First note that by
the first condition in (1), we have $s<p_0$ since $p_0>q_-$.  By the
first condition in (2) we must have $p/s>1$ for $(p/s)'$ to be
defined.    To prove the lower inequalities, it is easier to look back
to the proof to see where these come from.  The first comes from the
requirement that $p/\alpha_1>1$, which follows from the fact that in
this case we have $p/\alpha_1=\tau_p>1$.  The second condition comes
from the requirement that $(p/s)'/\alpha_2>1$, which comes from the
fact that this equal to $\tau_p'$.
\end{remark}

\begin{remark}\label{remark:limited-cases}
  The computations in the previous remark also show why our
  extrapolation theorem is stated in a way that is quite different
  from the constant exponent case.  In our reduction we need to choose
  the constants so that the two conditions on the weight in
  \eqref{pair:limited} are actually the same: i.e.,
  $\alpha_1 - \beta_1 = \beta_2$ and $(\pp/\alpha_1)' =
  (\pp/s)'/\alpha_2$. But this last equality reduces to
    \[ \pp = \frac{s\alpha_2-\alpha_1}{\alpha_2-1} = \frac{s(q_+ - q_-) + q_-(p_* - q_+)}{p_* - q_-}, 
    \]
and this can only hold if $\pp = p$ is a constant.  However, in obtaining \eqref{result:limited-var2}, we did have two separate conditions from \eqref{pair:limited}, namely $w^{\sigma} \in A_{\frac{\pp}{c\sigma}}$ and $1 \in A_{(\pp/s)'/\alpha_2}$, which always holds. It would be of
interest to find a different version of Theorem~\ref{thm:limited-var}
that did reduce immediately to the constant exponent theorem.
\end{remark}

\subsection*{Proof of Corollary \ref{cor:limited-corollary}}
Given $\delta \in (0, 1]$ we can restate our hypothesis
\eqref{hyp:limited-corollary} as follows:
\[ \int_\subRn f(x)^{2} w_0(x) \,dx \leq c \int_\subRn  g(x)^{2} w_0(x) \,dx,  \]
for all weights $w_0$ such that $w_0^{1/\delta} \in A_2$.  By
Lemma~\ref{lemma:jn} this is equivalent to $w_0 \in A_{2/q_-} \cap
RH_{(q_+/2)'}$, where $q_- = \frac{2}{1 + \delta}$ and $q_+ =
\frac{2}{1 - \delta}$.  This is the hypothesis
\eqref{hyp:limited-var} of
Theorem~\ref{thm:limited-var}, and  applying
this theorem, we get {\eqref{result:limited-corollary1} and \eqref{result:limited-corollary2}} for all
$\pp$ satisfying \eqref{cor:limited-pp}.
%

%---------------------------------------------------------------------------------------------------------------------

\subsection*{Proof of Theorem~\ref{thm:A1var} and
  Theorem~\ref{thm:off-weightedvar} when $p_0=1$}

To prove Theorem~ \ref{thm:A1var} we need to modify the general
approach outlined in Section~\ref{section:general}.  To see why, first
consider the proof of Theorem~\ref{thm:diag-weightedvar}.  If we take
$p_0=1$, then the proof fails, because in order to apply H\"older's
inequality we require $s<1$, but later we need the constraint $s>1$ for the
maximal operator to be bounded on $L^{\pp/\alpha_1}(w^{\alpha_1-\beta_1})$.   This
suggests that we should not use H\"older's inequality and not
introduce the operator $H_1$ (which leads to this condition on the
boundedness of the maximal operator).  We can still dualize if we take
$s=1$, and this gives us the correct exponent to apply our
hypothesis.   We can then introduce the operator $H_2$, and argue as
before to determine the appropriate values for $\alpha_2$ and
$\beta_2$.

This seem approach works for general $p_0$.  Fix $\pp\in \Pp_0$,
$p_-\geq p_0$, and $(f,g)\in \F$.  As before, we may assume without
loss of generality that $0<\|f\|_{\Lp(w)},\, \|g\|_{\Lp(w)} <\infty$.
We will use Proposition~\ref{prop:H_j} to define an operator $H_2=\Rj_2(h_2^{\alpha_2}
w^{\beta_2})^{1/\alpha_2}w^{{-}\beta_2/\alpha_2}$.   By dilation and duality, there
exists $h_2 \in L^{(\pp/p_0)'}$, $\|h_2\|_{(\pp/p_0)'}=1$, such that
\[ \|fw\|_\pp^{p_0} \leq C\int_\subRn f^{p_0} h_2 w^{p_0}\,dx
\leq C\int_\subRn f^{p_0} H_2 w^{p_0}\,dx. \]
To apply our hypothesis~\eqref{eqn:A1hyp} we need the righthand term
to be bounded.  Since $h_2\in L^{(\pp/p_0)'}$, if we assume that $H_2$
is bounded on the same space, then by H\"older's inequality and
dilation we have that
\[  \int_\subRn f^{p_0} H_2 w^{p_0}\,dx \leq
\|fw\|_\pp^{p_0}\|H_2\|_{(\pp/p_0)'} \leq 2 \|fw\|_\pp\|h_2\|_{(\pp/p_0)'} <
\infty. \]
For $H_2$ to be so bounded, we need $M$ to be bounded on
$L^{(\pp/p_0)'/\alpha_2}(w^{-\beta_2})$.   Furthermore, to apply
our hypothesis we also need $H_2 w^{p_0}\in A_1$, so we must have that
$\alpha_2=1$ and $\beta_2=p_0$.

Therefore, if $M$ is bounded on $L^{(\pp/p_0)'}(w^{-p_0})$, we have
that
\[ \int_\subRn f^{p_0} H_2 w^{p_0}\,dx \leq
C \int_\subRn g^{p_0} H_2 w^{p_0}\,dx \leq
C \|gw\|_\pp^{p_0}\|H_2\|_{(\pp/p_0)'}  \leq C \|gw\|_\pp^{p_0}. \]
This completes the proof.

\begin{remark}
We note that in this endpoint case we do not have any flexibility in
choosing our parameters:  at each stage our choice is completely
determined by the requirements of the proof.
\end{remark}

The proof of Theorem~\ref{thm:off-weightedvar} when $p_0=1$ is nearly
identical to the proof of Theorem~\ref{thm:A1var} and can be motivated
by exactly the same analysis as we made of the proof of
Theorem~\ref{thm:diag-weightedvar}.    If we apply dilation and
duality with $p_0$ replaced by $q_0$, we get
\[ \|fw\|_\qq^{q_0} \leq C\int_\subRn f^{q_0} H_2 w^{q_0}\,dx. \]
Checking the required conditions  we see that we can apply our
hypothesis if $H_2 w^{q_0} \in A_1$, which is equivalent to
$H_2^{1/q_0} w \in A_{1,q_0}$, and this follows if the maximal operator is bounded
on $L^{(\qq/q_0)'}(w^{-q_0})$.   The rest of the proof now continues
exactly as before.

\subsection*{Proof of Theorem~\ref{thm:Ainfty-extrapolvar} and
  Proposition~\ref{prop:Ainfty-weaker}}

We could prove Theorem~\ref{thm:Ainfty-extrapolvar} by an analysis
similar to that used to prove Theorem~\ref{thm:A1var}.  However, we
can also derive it directly from this result using the connection
between $A_1$ and $A_\infty$ extrapolation
(cf.~\cite[Proposition~3.20]{cruz-martell-perezBook}).   Fix $\pp$ and
$s\leq p_-$ as in our hypotheses.  Then by
Theorem~\ref{thm:Ainfty-extrapol}, we have that \eqref{eqn:Ainfty-hyp}
holds with $p_0$ replaced by $s$ and for any $w_0\in A_\infty$.  In
particular, we can take $w_0\in A_1$, and this gives us the
hypothesis~\eqref{eqn:A1hyp} in Theorem~\ref{thm:A1var} with $p_0$
replaced by $s$.  The desired conclusion now follows from this result.

\medskip

 Finally, we prove Proposition~\ref{prop:Ainfty-weaker}.  Fix a ball
$B$.  Define the exponent function $\rr= \frac{1}{1-s}$.  Then it is
immediate that
\[ \frac{1}{(\pp/s)'} = \frac{s}{\cpp} + \frac{1}{\rr}. \]
Therefore, by dilation and the generalized H\"older's
inequality~\cite[Corollary~2.28]{cruz-fiorenza-book},
\begin{multline*}
 |B|^{-1} \|w^s\chi_B\|_{\pp/s}\|w^{-s}\chi_B\|_{(\pp/s)'}
\leq
|B|^{-1}\|w\chi_B\|_{\pp}^s\|w^{-s}\chi_B\|_{\cpp/s}\|\chi_B\|_\rr \\
= |B|^{-1} \|w\chi_B\|_{\pp}^s\|w^{-1}\chi_B\|_{\cpp}^s|B|^{1-s}
\leq [w]_{A_\pp}^s.
\end{multline*}
Since this is true for all $B$, $w^s\in A_{\pp/s}$.

\bibliographystyle{plain}
\bibliography{weightedvar}

\end{document}